\newtheorem{theorem}{Theorem}[section]
\newtheorem{proposition}[theorem]{Proposition}
\newtheorem{lemma}[theorem]{Lemma}
\newtheorem{corollary}[theorem]{Corollary}
\theoremstyle{remark}
\newtheorem{remark}{Remark}[section]
\newtheorem*{note}{Note}
\numberwithin{equation}{section}
\begin{document}

\title[Cubature rules from Hall-Littlewood polynomials]
{Cubature rules from Hall-Littlewood polynomials}

\author{J.F.  van Diejen}

\address{
Instituto de Matem\'atica y F\'{\i}sica, Universidad de Talca,
Casilla 747, Talca, Chile}

\email{diejen@inst-mat.utalca.cl}

\author{E. Emsiz}

\address{
Delft Institute of Applied Mathematics,
Delft University of Technology,
Van Mourik Broekmanweg 6, 2628 XE, Delft, The Netherlands}
\email{e.emsiz@tudelft.nl}

\subjclass[2010]{Primary: 65D32;  Secondary 05E05, 15B52, 28C10, 33C52, 33D52, 43A75}
\keywords{cubature rules, Hall-Littlewood polynomials, random matrices, compact classical Lie groups, Haar measures}

\thanks{This work was supported in part by the {\em Fondo Nacional de Desarrollo
Cient\'{\i}fico y Tecnol\'ogico (FONDECYT)} Grant   \# 1170179.}

\date{November 2019}

\begin{abstract}
Discrete orthogonality relations for  Hall-Littlewood polynomials are employed, so as to derive cubature rules 
for the integration of homogeneous symmetric functions with respect to the density of the circular unitary ensemble (which originates from the
Haar measure on the special unitary group $SU(n;\mathbb{C})$). By passing to Macdonald's hyperoctahedral Hall-Littlewood polynomials,
we moreover find analogous cubature rules for the integration with respect to the density of the circular quaternion ensemble (which originates in turn from the
Haar measure on the compact symplectic group $Sp (n;\mathbb{H})$).
The cubature formulas under consideration are exact for a class of rational symmetric functions with simple poles supported on a prescribed complex hyperplane arrangement.
 In the planar situations (corresponding to $SU(3;\mathbb{C})$ and $Sp (2;\mathbb{H})$),  a determinantal expression for the Christoffel weights enables us to write down compact
cubature rules for the integration over the equilateral triangle and the isosceles  right triangle, respectively.
\end{abstract}

\maketitle



\section{Introduction}\label{sec1}
It is well-known that the Haar measures of the classical compact Lie groups  \cite{sim:representations,pro:lie} yield the densities of ubiquitous random matrix ensembles \cite{meh:random,for:log-gases}.  
A crucial issue, from the point of view of applications, is the development of techniques that permit to perform efficient numerical integration with respect to the densities in question. 
In recent years, Gauss-like cubature rules were constructed serving this purpose \cite{mun:group,li-xu:discrete,moo-pat:cubature,moo-mot-pat:gaussian,hri-mot:discrete,hri-mot-pat:cubature},
with the aid of a fundamental  toolset based on the use of orthogonal polynomials
\cite{str:approximate,hof-wit:generalized,bee:chebyshev,sob:cubature,sob-vas:theory,coo:constructing,coo-mys-sch:cubature,dun-xu:orthogonal}. For the pertinent class of integrals at issue, the cubature nodes arise in this picture from the
zeros of characters of irreducible representations of the underlying Lie group. These characters are given explicitly by Schur polynomials, 
and the aim of the present work is to generalize the corresponding construction from Schur polynomials to Hall-Littlewood
polynomials \cite{mac:symmetric,mac:orthogonal,nel-ram:kostka}. To this end we exploit discrete orthogonality structures for the Hall-Littlewood polynomials originating from mathematical physics \cite{die:diagonalization,die:finite-dimensional,die-ems:orthogonality}.
Our approach entails cubature rules for the integration  of rational symmetric functions with prescribed poles on a complex hyperplane arrangement, controlled
by the orthogonality measure of the Hall-Littlewood polynomials. In the special case of a rank-one Lie group, we reproduce in this manner particular instances of known quadrature rules stemming from the Bernstein-Szeg\"o polynomials \cite{dar-gon-jim:quadrature,bul-cru-dec-gon:rational,die-ems:quadrature}, which were
conceived to integrate rational functions with prescribed poles against the Chebyshev weight functions.
The material is organized as follows.

In Section \ref{sec2} we formulate our cubature rule stemming from the Hall-Littlewood polynomials. The formula in question serves to
integrate homogeneous symmetric functions with respect to the density of the circular unitary ensemble, given
by the Haar measure on the special unitary group $SU(n;\mathbb{C})$.

In Section \ref{sec3} we provide an analogous construction  based on Macdonald's hyperoctahedral Hall-Littlewood polynomials.
The corresponding cubature rule is designed to integrate symmetric functions with respect to the density of the circular quaternion ensemble, which is given in turn by the
Haar measure on the compact symplectic group $Sp (n;\mathbb{H})$.

In both situations the cubature nodes turn out to be located at common roots of an associated family of quasi-orthogonal polynomials. An explicit formula for the quasi-orthogonal 
polynomials of interest is derived in Section \ref{sec4}.

The Christoffel weights of our cubature rules are encoded by squared norms determined by discrete orthogonality relations for the (hyperoctahedral) Hall-Littlewood polynomials
from Refs. \cite{die:diagonalization,die:finite-dimensional,die-ems:orthogonality}.
In Section \ref{sec5}, we formulate a compact determinantal formula for these Christoffel weights
in the case of planar integrals (associated with  $SU(3;\mathbb{C})$ and $Sp (2;\mathbb{H})$). The corresponding
cubature rules serve to integrate over the equilateral triangle and the isosceles  right triangle, respectively. 

Section \ref{sec6} concludes our presentation, by pointing out how various previous cubature rules studied in 
\cite{mun:group,li-sun-xu:discrete,li-xu:discrete,moo-pat:cubature,moo-mot-pat:gaussian,hri-mot:discrete,hri-mot-pat:cubature,die-ems:exact,die-ems:cubature} can be seen as parameter degenerations of those considered here.  The Hall-Littlewood polynomials specialize at the parameter values of interest to (symplectic) Schur polynomials or to symmetric monomials, respectively.

\begin{note}
Below we will occasionally refer to the dominance partial ordering of vectors  in $\mathbb{R}^n$:
\begin{equation}
\mathbf{x}\leq \mathbf{y}   \Longleftrightarrow  x_1+\cdots +x_k\leq y_1+\cdots +y_k\quad (k=1,\ldots,n).
\end{equation}
We will also employ
the following notation for counting the multiplicity of $x\in\mathbb{R}$ inside $\mathbf{x}=(x_1,\ldots ,x_n)\in\mathbb{R}^n$:
\begin{equation}\label{mult}
\text{m}_x(\mathbf{x}):= | \{  1\leq j\leq n\mid x_j=x\} |  .
\end{equation}

\end{note}

\section{Cubature rules associated with Hall-Littlewood polynomials}\label{sec2}
In this section we present a cubature rule for the evaluation of integrals of homogeneous symmetric functions in the variables $z_j=e^{i\xi_j}$ ($j=1,\ldots ,n$), over the fundamental domain
\begin{equation}\label{A:a}
\mathbb{A}^{(n)}_{\texttt{a}}:= \{ (\xi_1,\ldots ,\xi_n)\in \mathbb{R}^{n}_0 \mid  \xi_1>\xi_2>\cdots >\xi_{n}> \xi_1- 2\pi\}
\end{equation}
for the hyperplane
\begin{equation}
\mathbb{R}^{n}_0:= \{ (\xi_1,\ldots\xi_n) \in\mathbb{R}^{n}\mid \xi_1+\cdots +\xi_{n}=0\}. 
\end{equation}
Here the integration is with respect to the density of the circular 
unitary ensemble 
\begin{equation}\label{circular}
\rho_{\texttt{a}}(\boldsymbol{\xi}):=   \prod_{1\leq j<k\leq n}    |   e^{i\xi_j}  -e^{i\xi_k}  |^2  =  2^{n(n-1)} \prod_{1\leq j<k\leq n} \sin^2\left(  \frac{\xi_j-\xi_k}{2}\right) 
\end{equation}
stemming from the
Haar measure on the special unitary group $SU(n;\mathbb{C})$. The coordinates $\boldsymbol{\xi}:=(\xi_1,\ldots ,\xi_n)$ correspond in this picture to the angles of the eigenvalues.

\subsection{Hall-Littlewood polynomials} The Hall-Littlewood polynomials constitute an important orthogonal basis for the space of symmetric functions in $n$ variables, which has been studied intensively from the point of algebraic combinatorics through its connections with the representation theory of affine Hecke algebras. For our purposes it suffices to collect only a few elementary properties extracted from the standard references \cite[Chapter III]{mac:symmetric}, \cite[\S 10]{mac:orthogonal} and \cite{nel-ram:kostka}.

A convenient way to label 
Hall-Littlewood polynomials is by means of dominant weight vectors
\begin{subequations}
\begin{equation}\label{dominant-cone:a}
\Lambda^{(n)}_{\texttt{a}}:= \{   l_1\omega_1+\cdots+l_{n-1}\omega_{n-1} \mid l_1,\ldots ,l_{n-1}\in\mathbb{Z}_{\geq 0} \} 
\end{equation}
that are generated by the $SU(n;\mathbb{C})$ fundamental weight basis  (cf. \cite[Planche I]{bou:groupes}) 
\begin{equation}\label{fwb:a}
\omega_j=e_1+\cdots +e_j- {\textstyle \frac{j}{n}}(e_1+\cdots +e_{n})\qquad (j=1,\ldots ,n-1)
\end{equation}
\end{subequations}
spanning the hyperplane $\mathbb{R}^{n}_0$. (Here the vectors $e_1,\ldots ,e_n$ refer to the standard unit basis of $\mathbb{R}^n$). Specifically, for any $\mu=(\mu_1,\ldots,\mu_n)\in \Lambda^{(n)}_{\texttt{a}}$ the corresponding Hall-Littlewood polynomial is given explicitly by
\begin{subequations}
\begin{align}\label{HLp:a}
P_{\texttt{a};\mu} (\boldsymbol{\xi} ;q) :=  
 \sum_ {\sigma\in S_{n}}   C_{\texttt{a}}( \xi_{\sigma_1},\ldots ,  \xi_{\sigma_{n}};q)
\exp (i \xi_{\sigma_1}\mu_1+\cdots +i  \xi_{\sigma_{n}} \mu_{n})  ,
\end{align}
where
\begin{equation}\label{Cp:a}
C_{\texttt{a}}(\xi_1,\ldots ,\xi_n;q)=C_{\texttt{a}}(\boldsymbol{\xi};q) := \prod_{1\leq j<k \leq n} \left(\frac{1-q e^{-i(\xi_{j}-\xi_k)}}{1-e^{-i(\xi_{j}-\xi_k)}}\right) ,
\end{equation}
\end{subequations}
and the summation is meant over all permutations $\sigma= { \bigl( \begin{smallmatrix}1& 2& \cdots & n \\
 \sigma_1&\sigma_2&\cdots &\sigma_n
 \end{smallmatrix}\bigr)}$ comprising the symmetric group $S_{n}$. 
For $-1<q<1$, Hall-Littlewood polynomials are known to obey the following fundamental orthogonality relations, cf. e.g. \cite[\S 10]{mac:orthogonal} or \cite[Section 3]{nel-ram:kostka} (with the root system $R$ of type $A_{n-1}$):
\begin{align}\label{or-c:a}
\frac{1}{(2\pi )^{n-1} n^{1/2}}   \int_{\mathbb{A}^{(n)}_{\texttt{a}}}    P_{\texttt{a};\mu}  (\boldsymbol{\xi};q) \overline{P_{\texttt{a};\nu}  (\boldsymbol{\xi};q )}     | C_{\texttt{a}}(\boldsymbol{\xi};q) |^{-2}  \text{d} 
\boldsymbol{\xi} &  \\
=
\begin{cases} 
 \prod_{\substack{1\leq j<k\leq n\\ \mu_j-\mu_k=0}}  \frac{1-q^{1+k-j}}{1-q^{k-j}} &\text{if}\  \nu= \mu ,\\
0 &\text{if}\  \nu\neq \mu 
\end{cases}
 & \nonumber
\end{align}
($\mu,\nu\in \Lambda^{(n)}_{\texttt{a}}$).

\begin{remark}
In the orthogonality relations \eqref{or-c:a}  the integration is meant with respect to the Lebesgue  measure $\text{d}\boldsymbol{\xi}$ stemming from the standard volume form associated with the $(n-1)$-dimensional euclidean space $\mathbb{R}^n_0$. In particular:
$\int_{\mathbb{A}^{(n)}_{\texttt{a}}} \text{d}\boldsymbol{\xi} = \text{Vol} \bigl(\mathbb{A}^{(n)}_{\texttt{a}}\bigr) = \frac{(2\pi)^{n-1}n^{1/2}}{n!} $.
\end{remark}

\subsection{Finite-dimensional orthogonality relations}
Given a fixed positive integral level $m$, we consider the following finite alcove in $\Lambda^{(n)}_{\texttt{a}}$:
\begin{equation}\label{dwv:a}
\Lambda^{(m,n)}_{\texttt{a}}:= \{   l_1\omega_1+\cdots+l_{n-1}\omega_{n-1} \mid l_1,\ldots ,l_{n-1}\in\mathbb{Z}_{\geq 0},\, l_1+\cdots +l_{n-1}\leq m\} .
\end{equation}
In \cite{die:diagonalization}
a lattice Laplacian on $\Lambda^{(m,n)}_{\texttt{a}}$ was constructed  (with Robin-type boundary conditions involving the parameter $q$)  for which
$P_{\texttt{a};\mu} (\boldsymbol{\xi}) $ (viewed as a function of $\mu\in \Lambda^{(m,n)}_{\texttt{a}}$) constitutes an eigenfunction provided the spectral variable $\boldsymbol{\xi}\in \mathbb{A}^{(n)}_{\texttt{a}}$ belongs
to a discrete set of nodes $ \boldsymbol{\xi}^{(m,n)}_{\texttt{a};\lambda}$, $\lambda\in \Lambda^{(m,n)}_{\texttt{a}}$ parametrizing the eigenvalues. For $-1<q<1$, the construction in question gave rise to a novel finite-dimensional orthogonality relation for the Hall-Littlewood polynomials of the form  \cite[Section 5.2]{die:diagonalization}:
\begin{subequations}
\begin{equation}\label{do:a}
\sum_{\mu\in\Lambda^{(m,n)}_{\texttt{a}}}      P_{\texttt{a};\mu} \bigl( \boldsymbol{\xi}^{(m,n)}_{\texttt{a};\lambda} ;q \bigr)     \overline{  P_{\texttt{a};\mu} \bigl( \boldsymbol{\xi}^{(m,n)}_{\texttt{a};\kappa}  ;q \bigr) }
 \delta^{(m,n)}_{\texttt{a};\mu }(q) = 0 \quad\text{if}\ \lambda\neq\kappa
\end{equation}
($\lambda,\kappa\in \Lambda^{(m,n)}_{\texttt{a}}$), where
 \begin{equation}\label{w:a}
\delta^{(m,n)}_{\texttt{a};\mu }(q):= \prod_{\substack{1\leq j<k\leq n\\ \mu_j-\mu_k=0}}  \frac{1-q^{k-j}}{1-q^{1+k-j}}
\prod_{\substack{1\leq j<k\leq n\\ \mu_j-\mu_k=m}}  \frac{1-q^{n-k+j}}{1-q^{n+1-k+j}}  .
\end{equation}
\end{subequations}

\subsection{Positions of the nodes} At general parameter values $-1<q<1$, explicit formulas for the positions of the spectral nodes $\boldsymbol{\xi}^{(m,n)}_{\texttt{a};\lambda}$, $\lambda\in \Lambda^{(m,n)}_{\texttt{a}}$ are not available unfortunately. Instead, we will recur to a simple numerical algorithm stemming from  \cite[Section 4]{die:diagonalization}.
Specifically, for any $-1<q<1$ and $\lambda=(\lambda_1,\ldots,\lambda_n)\in \Lambda^{(m,n)}_{\texttt{a}}$ the explicit position of the pertinent
node $\boldsymbol{\xi}^{(m,n)}_{\texttt{a};\lambda}$ turns out to be given by  the
unique global minimum of the following  semi-bounded Morse function $V^{(m,n)}_{ \texttt{a};\lambda }:\mathbb{R}^n\to \mathbb{R}$:
\begin{subequations}
\begin{equation}\label{morse-a}
V^{(m,n)}_{ \texttt{a};\lambda }(\boldsymbol{\xi}) := \sum_{1\le j < k \le n }    \int_0^{\xi_j-\xi_k} v_q(\vartheta)\text{d}\vartheta 
 + \sum_{1\leq j\leq n} \left(
{\textstyle \frac{m}{2} }\xi_j^2-2\pi (\varrho_{\texttt{a};j}+\lambda_j)\xi_j 
 \right) ,
\end{equation}
where
\begin{equation}
\varrho_{\texttt{a};j}:= \frac{1}{2} \bigl( n+1-2j\bigr)\qquad  (j=1,\ldots ,n) 
\end{equation}
and
\begin{equation}\label{uv}
v_q(\vartheta ) := 
\int_0^\vartheta u_q (\theta) \text{d}\theta\quad \text{with}\quad
u_q(\theta) := \frac{1-q^2}{1-2q\cos (\theta) +q^2}.
\end{equation}
\end{subequations}
Notice in this connection that the existence of this global minimum  is guaranteed because $V^{(m,n)}_{\texttt{a},\lambda}  (\boldsymbol{\xi})\to +\infty$ as $|\boldsymbol{\xi}|\to\infty$, whereas the uniqueness follows by convexity:
\begin{align}\label{Hesse:a}
&H^{(n,m)}_{\texttt{a};j,k}(\boldsymbol{\xi)}:=\partial_{\xi_j}\partial_{\xi_k} V^{(n,m)}_{ \texttt{a};\lambda} (\boldsymbol{\xi}) \\
&=
\begin{cases}
m + \sum_{\substack{1\leq l\leq n\\ l\neq j}}u_q(\xi_j-\xi_l) & \text{if $ k=j$}\\
-u_q(\xi_j-\xi_k) & \text{if $k\neq j$}\\
\end{cases} ,\nonumber
\end{align}
so (for any $(x_1,\ldots ,x_n)\in\mathbb{R}^n$)
\begin{align*}
\sum_{1\leq j,k\leq n}  H^{(m,n)}_{\texttt{a}; j,k} (\boldsymbol{\xi}) x_j x_k   
= & \sum_{1\leq j\leq n}m x_j^2 +  \sum_{1\leq j<k\leq n}
   u_q(\xi_j -\xi_k)(x_j-x_k)^2 \\ \ge &\,  m\sum_{1\leq j\leq n} x_j^2 .
\end{align*}

The numerical positions of the nodes can now be conveniently computed from the equations for the critical point $\partial_{\xi_j} V^{(m,n)}_{ \texttt{a};\lambda }(\boldsymbol{\xi}) =0$:
\begin{equation}\label{CEQ:a}
m\xi_j+   \sum_{\substack{ 1\leq k\leq n \\ k\neq j}} v_q(\xi_j-\xi_k)=2\pi (\lambda_j+\varrho_{\texttt{a},j})\qquad (j=1,\ldots ,n),
\end{equation}
by means of a fixed-point iteration scheme such as Newton's method. At this point numerical integration for the evaluation of $v_q(\xi)$ is to be avoided, since it is much more efficient to invoke the explicit formula
$v_q(\vartheta ) =2\arctan \left(  \frac{1+q}{1-q} \tan \bigl(\frac{\vartheta}{2}\bigr)      \right)$ for $-\pi<\vartheta<\pi$, in combination with
the quasi-periodicity  $v_q(\vartheta+2\pi)= v_q(\vartheta)+2\pi$ for $\vartheta\in\mathbb{R}$.
At the special parameter value $q=0$ Eq. \eqref{CEQ:a} degenerates into a linear system,  the solution of which is given explicitly by $\xi_j=\frac{2\pi  (\lambda_j+\varrho_{\texttt{a},j}) }{n+m}$, $j=1,\ldots ,n$;
 this explicit solution at  $q=0$ serves as a suitable initial estimate for starting up the Newton iteration at general parameter values $-1 < q <1$ (cf. Remark \ref{bounds-a:rem} below).

\begin{remark}\label{regularity-a:rem}
It is instructive to observe that
the nodes $\boldsymbol{\xi}^{(m,n)}_{\texttt{a};\lambda}$, $\lambda\in\Lambda^{(m,n)}_{\texttt{a}}$ belong to the domain $\mathbb{A}^{(n)}_{\texttt{a}}$ \eqref{A:a}. 
Indeed, by summing the $n$ equations  in Eq. \eqref{CEQ:a} characterizing  the position of $\boldsymbol{\xi}^{(m,n)}_{\texttt{a};\lambda}$, one sees---upon exploiting that $v_q(\vartheta)$ is odd in $\vartheta$---that the critical point in question lies on the hyperplane $\mathbb{R}^n_0$. Furthermore, by subtracting the $k$th equation from the $j$th  equation:
\begin{equation}\label{gap-a:a}
m(\xi_j-\xi_k) +  \sum_{1\leq l\leq n}  \Bigl( v_q(\xi_j-\xi_l) -v_q(\xi_k-\xi_l) \Bigr) =  2\pi (\lambda_j  -\lambda_k + k-j),
\end{equation}
it is manifest that at $\boldsymbol{\xi}=\boldsymbol{\xi}^{(m,n)}_{\texttt{a};\lambda}$ the inequality $2\pi>\xi_j-\xi_k>0$ holds when $1\leq j<k\leq n$.
Here one uses the monotonicity and the (above) quasi-periodicity of $v_q(\vartheta)$ in $\vartheta$, together with the observation that in this situation $0\leq \lambda_j-\lambda_k\leq m$ (because $\lambda\in\Lambda^{(m,n)}_{\texttt{a}}$).
\end{remark}

\begin{remark}\label{bounds-a:rem}
Since $ \frac{1-|q|}{1+|q|} \leq u_q(\theta) \leq \frac{1+|q|}{1-|q|}$ for $\theta\in\mathbb{R}$,
the following bounds for the position of the node $\boldsymbol{\xi}=\boldsymbol{\xi}^{(m,n)}_{\texttt{a};\lambda}$ ($\lambda\in\Lambda^{(m,n)}_{\texttt{a}}$) are immediate from
Eq. \eqref{gap-a:a} via the mean value theorem:
\begin{subequations}
  \begin{equation}\label{gap-a:b}
\frac{2\pi(k-j+\lambda_j-\lambda_k)}{m+\kappa_{\texttt{a};-}(q)} \leq \xi_j-\xi_k\leq \frac{2\pi(k-j+\lambda_j-\lambda_k)}{m+\kappa_{\texttt{a};+}(q)} 
\end{equation}
for $1 \le  j < k \le n$, where
\begin{equation}\label{gap-a:c}
\kappa_{\texttt{a};\pm}(q):= n \left( \frac{1-|q|}{1+|q|}\right)^{\pm 1} .
\end{equation}
\end{subequations}
These bounds confirm that
\begin{equation}
\left.  \boldsymbol{\xi}^{(m,n)}_{\texttt{a};\lambda}\right|_{q=0}=\frac{2\pi (\lambda+\varrho_{\texttt{a}})}{n+m}\qquad (\lambda\in\Lambda^{(m,n)}_{\texttt{a}}),
\end{equation}
where $\varrho_{\texttt{a}}:=(\varrho_{\texttt{a},1},\ldots , \varrho_{\texttt{a},n})$.
Moreover, since at $\boldsymbol{\xi}=\frac{2\pi (\lambda+\varrho_{\texttt{a}})}{n+m}$ ($\lambda\in\Lambda^{(m,n)}_{\texttt{a}}$) the inequalities in Eqs. \eqref{gap-a:b}, \eqref{gap-a:c} are satisfied for any
$-1<q<1$, this special point provides a convenient initial estimate when computing the position of the node $\boldsymbol{\xi}^{(m,n)}_{\texttt{a};\lambda}$ numerically from Eq. \eqref{CEQ:a} via Newton's method.
\end{remark}

\subsection{Cubature rule}\label{cub:a:sec}
Let
\begin{equation}
\mathbb{P}^{(m,n)}_{\texttt{a}}:=  \text{Span}_{\mu\in\Lambda^{(m,n)}_{\texttt{a}}}   \{    M_{\texttt{a};\mu} (\boldsymbol{\xi})  \}  ,
\end{equation}
with
\begin{subequations}
\begin{equation}\label{smonomials}
M_{\texttt{a};\mu} (\boldsymbol{\xi}) :=  \frac{1}{N_{\texttt{a}; \mu}}   \sum_ {\sigma\in S_{n}}   
\exp (i \xi_{\sigma_1}\mu_1+\cdots +i  \xi_{\sigma_{n}} \mu_{n})  
\end{equation}
normalized such that each exponential term on the RHS occurs with multiplicity one:
\begin{equation}\label{stabilizer}
  N_{\texttt{a};\mu} :=  \prod_{\substack{1\leq j<k\leq n\\ \mu_j-\mu_k=0}}  \frac{1+k-j}{k-j} .
  \end{equation}
\end{subequations}
Notice that the space $\mathbb{P}^{(m,n)}_{\texttt{a}}$ is isomorphic to the $\binom{m+n-1}{m}$-dimensional space of symmetric polynomials of degree at most $m$ in each
of the variables $z_j=e^{i\xi_j}$ ($j\in \{ 1,\ldots ,n\}$) subject to the relation $z_1\cdots z_n=1$.

\begin{theorem}[Hall-Littlewood Cubature]\label{hwc-a:thm}
For $q\in (-1,1)$ and $m\in\mathbb{Z}_{>0}$,  the following cubature rule holds true for any  symmetric polynomial
$f(\boldsymbol{\xi})$  in $\mathbb{P}^{(2m-1,n)}_{\texttt{a}}$:
\begin{subequations}
\begin{equation}\label{hwc:a}
\frac{1}{(2\pi )^{n-1} n^{1/2}}   \int_{\mathbb{A}^{(n)}_{\texttt{a}}}   f(\boldsymbol{\xi})  | C_{\texttt{a}}(\boldsymbol{\xi};q) |^{-2} 
  \text{d} \boldsymbol{\xi}
=
\sum_{\lambda\in\Lambda^{(m,n)}_{\texttt{a}}}      f \bigl( \boldsymbol{\xi}^{(m,n)}_{\texttt{a};\lambda}  \bigr)   \hat{\Delta}^{(m,n)}_{\texttt{a};\lambda } ,
\end{equation}
with Christoffel weights given by
\begin{equation}\label{cw:a}
\hat{\Delta}^{(m,n)}_{\texttt{a};\lambda }:= 
\Biggl(\sum_{\mu\in \Lambda^{(m,n)}_{\texttt{a}}}   \left| P_{\texttt{a};\mu} \bigl(\boldsymbol{\xi}^{(m,n)}_{\texttt{a};\lambda} ;q\bigr) \right|^2  \delta^{(m,n)}_{\texttt{a};\mu } (q) \Biggr)^{-1}  .
\end{equation}
\end{subequations}
\end{theorem}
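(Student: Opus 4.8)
The plan is to read both sides of \eqref{hwc:a} as $\mathbb{C}$-linear functionals on the finite-dimensional space $\mathbb{P}^{(2m-1,n)}_{\texttt{a}}$ --- write $I[f]$ for the normalized integral on the left and $L[f]:=\sum_{\lambda\in\Lambda^{(m,n)}_{\texttt{a}}}f\bigl(\boldsymbol{\xi}^{(m,n)}_{\texttt{a};\lambda}\bigr)\hat{\Delta}^{(m,n)}_{\texttt{a};\lambda}$ for the quadrature sum on the right --- and to verify that they coincide on a spanning set assembled from products of Hall--Littlewood polynomials. Abbreviating $\delta_\mu:=\delta^{(m,n)}_{\texttt{a};\mu}(q)$, I would first note that every factor $1-q^{r}$ with $r\ge 1$ is positive for $-1<q<1$, so $\delta_\mu>0$ and all Christoffel weights \eqref{cw:a} are well defined and strictly positive. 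For a weight written with weakly decreasing coordinates $\mu_1\ge\cdots\ge\mu_n$ I call $\mu_1-\mu_n$ its \emph{spread}, and I record that complex conjugation sends $M_{\texttt{a};\nu}\mapsto M_{\texttt{a};\nu^{*}}$ and $\overline{P_{\texttt{a};\nu}}$ to a Hall--Littlewood polynomial with leading monomial $M_{\texttt{a};\nu^{*}}$, where $\nu^{*}=(-\nu_n,\ldots,-\nu_1)$ is the dual dominant weight of the same spread.

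The first concrete step is to upgrade the finite-dimensional orthogonality \eqref{do:a} to its dual. Collecting the values into the square matrix $A=\bigl(P_{\texttt{a};\mu}(\boldsymbol{\xi}^{(m,n)}_{\texttt{a};\lambda};q)\bigr)_{\mu,\lambda}$ indexed by $\Lambda^{(m,n)}_{\texttt{a}}$, relation \eqref{do:a} reads $A^{*}DA=H$, with $D=\mathrm{diag}(\delta_\mu)$ and $H=\mathrm{diag}(h_\lambda)$, $h_\lambda:=\sum_{\mu}|P_{\texttt{a};\mu}(\boldsymbol{\xi}^{(m,n)}_{\texttt{a};\lambda};q)|^{2}\delta_\mu=1/\hat{\Delta}^{(m,n)}_{\texttt{a};\lambda}$. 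Since $D$ and $H$ are positive diagonal matrices, $D^{1/2}AH^{-1/2}$ is unitary, and reading off $D^{1/2}AH^{-1}A^{*}D^{1/2}=\mathrm{Id}$ yields the dual orthogonality
\[
\sum_{\lambda\in\Lambda^{(m,n)}_{\texttt{a}}}P_{\texttt{a};\mu}\bigl(\boldsymbol{\xi}^{(m,n)}_{\texttt{a};\lambda};q\bigr)\,\overline{P_{\texttt{a};\nu}\bigl(\boldsymbol{\xi}^{(m,n)}_{\texttt{a};\lambda};q\bigr)}\,\hat{\Delta}^{(m,n)}_{\texttt{a};\lambda}
=\begin{cases}1/\delta_\mu,&\mu=\nu,\\ 0,&\mu\neq\nu,\end{cases}
\]
that is, $L\bigl[P_{\texttt{a};\mu}\overline{P_{\texttt{a};\nu}}\bigr]=1/\delta_\mu$ if $\mu=\nu$ and $0$ otherwise. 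Comparing this with the continuous orthogonality \eqref{or-c:a}, which vanishes unless $\nu=\mu$ and equals $\prod_{1\le j<k\le n,\,\mu_j=\mu_k}\tfrac{1-q^{1+k-j}}{1-q^{k-j}}$ on the diagonal, one sees that $I$ and $L$ agree off the diagonal (both vanish) and differ on the diagonal only through the boundary factor $\prod_{\mu_j-\mu_k=m}(\cdots)$ carried by $\delta_\mu$ via \eqref{w:a}. The decisive observation is that this discrepancy is invisible inside $\mathbb{P}^{(2m-1,n)}_{\texttt{a}}$: the product $|P_{\texttt{a};\mu}|^{2}$ has leading monomial $M_{\texttt{a};\mu+\mu^{*}}$ of spread $2(\mu_1-\mu_n)$, so it lies in $\mathbb{P}^{(2m-1,n)}_{\texttt{a}}$ only when $\mu_1-\mu_n\le m-1$, and then $\mu$ has no pair with $\mu_j-\mu_k=m$ and the boundary factor is empty. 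Hence $I=L$ on every product $P_{\texttt{a};\mu}\overline{P_{\texttt{a};\nu}}$ with $\mu,\nu\in\Lambda^{(m,n)}_{\texttt{a}}$ that happens to lie in $\mathbb{P}^{(2m-1,n)}_{\texttt{a}}$.

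It remains to show that these admissible products span $\mathbb{P}^{(2m-1,n)}_{\texttt{a}}$, and this spanning/degree-bookkeeping is the step I expect to be the main obstacle. I would use the standard unitriangularity (in the dominance order) of the transition between $P_{\texttt{a};\mu}$ and the monomials $M_{\texttt{a};\mu}$: the leading term of $P_{\texttt{a};\mu}\overline{P_{\texttt{a};\nu}}$ is $M_{\texttt{a};\mu+\nu^{*}}$, while every lower monomial $M_{\texttt{a};\eta'}$ satisfies $\eta'\le\mu+\nu^{*}$; since all these vectors sum to $0$, dominance forces $\eta'_1\le(\mu+\nu^{*})_1$ and $\eta'_n\ge(\mu+\nu^{*})_n$, so the spread of $\eta'$ is at most that of $\mu+\nu^{*}$. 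Thus a product whose leading spread is $\le 2m-1$ lies entirely in $\mathbb{P}^{(2m-1,n)}_{\texttt{a}}$. Finally, each $M_{\texttt{a};\eta}$ with $\eta=\sum_j l_j\omega_j$, $\sum_j l_j\le 2m-1$, is realized as a leading term of an admissible product: choosing $l_j=a_j+b_j$ with $a_j,b_j\in\mathbb{Z}_{\ge 0}$, $\sum_j a_j\le m$ and $\sum_j b_j\le m$ (possible precisely because $\sum_j l_j\le 2m-1$), the weights $\mu=\sum_j a_j\omega_j$ and $\rho=\sum_j b_j\omega_j$ lie in $\Lambda^{(m,n)}_{\texttt{a}}$ and $\mu+\rho=\eta$, so $M_{\texttt{a};\eta}$ is the leading monomial of $P_{\texttt{a};\mu}\overline{P_{\texttt{a};\rho^{*}}}$. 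A downward induction on the dominance order then writes every $M_{\texttt{a};\eta}\in\mathbb{P}^{(2m-1,n)}_{\texttt{a}}$ as a combination of admissible products, establishing the spanning. Combined with the previous paragraph, $I$ and $L$ agree on a spanning set of $\mathbb{P}^{(2m-1,n)}_{\texttt{a}}$ and hence on all of it, which is exactly the asserted identity \eqref{hwc:a}.
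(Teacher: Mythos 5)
Your proposal is correct and follows essentially the same route as the paper's proof: unitarity of the weighted Hall--Littlewood matrix gives the dual orthogonality, comparison with the continuous orthogonality \eqref{or-c:a} shows the integral and the quadrature sum agree on products $P_{\texttt{a};\mu}\overline{P_{\texttt{a};\nu}}$ whenever the boundary factor in $\delta^{(m,n)}_{\texttt{a};\mu}(q)$ is absent, and dominance-triangularity of the monomial expansions gives the spanning. The only (immaterial) difference is bookkeeping: the paper enforces the absence of the boundary factor by restricting $\nu$ to $\Lambda^{(m-1,n)}_{\texttt{a}}$, whereas you observe that the offending diagonal products with $\mu_1-\mu_n=m$ fall outside $\mathbb{P}^{(2m-1,n)}_{\texttt{a}}$ anyway.
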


\begin{proof}
It is immediate from the discrete orthogonality relations in Eqs. \eqref{do:a}, \eqref{w:a}  that the  following matrix is unitary:
\begin{equation*}
\left[ 
\sqrt{ \delta^{(m,n)}_{\texttt{b};\mu }(q)} P_{\texttt{a};\mu} \bigl( \boldsymbol{\xi}^{(m,n)}_{\texttt{a};\lambda} ;q\bigr)  \sqrt{\hat{\Delta}^{(m,n)}_{\texttt{a};\lambda }}
\right]_{\mu,\lambda\in \Lambda^{(m,n)}_{\texttt{a}}}  .
\end{equation*}
By `column-row duality' this means that for any $\mu,\nu\in \Lambda^{(m,n)}_{\texttt{a}}$  (cf. \cite[Theorem 1]{die:finite-dimensional}):
\begin{equation}\label{or-d:a}
\sum_{\lambda\in\Lambda^{(m,n)}_{\texttt{a}}}      P_{\texttt{a};\mu} \bigl( \boldsymbol{\xi}^{(m,n)}_{\texttt{a};\lambda} ;q \bigr)     \overline{  P_{\texttt{a};\nu} \bigl( \boldsymbol{\xi}^{(m,n)}_{\texttt{a};\lambda}  ;q \bigr) }
 \hat{\Delta}^{(m,n)}_{\texttt{a};\lambda }  
=
\begin{cases} 
1/ \delta^{(m,n)}_{\texttt{a};\mu }(q)&\text{if}\  \nu= \mu , \\
0 &\text{if}\  \nu\neq \mu .
\end{cases} 
\end{equation}
If we compare this formula with the standard orthogonality relations for the corresponding Hall-Littlewood polynomials in Eq. \eqref{or-c:a},
then it is clear that both scalar products are equal if $\nu$ (say) is restricted to $ \Lambda^{(m-1,n)}_{\texttt{a}}$. Hence, since
$  \overline{P_{\texttt{a};\nu}  (\boldsymbol{\xi};q )}  =  P_{\texttt{a};(-\nu_n,\ldots ,-\nu_1)}  (\boldsymbol{\xi};q )$ and $(-\nu_n,\ldots ,-\nu_1)=l_{n-1}\omega_1+l_{n-2}\omega_2+\dots +l_1\omega_{n-1}$ when
$\nu= l_{1}\omega_1+l_{2}\omega_2+\dots +l_{n-1}\omega_{n-1}$),
we conclude that the asserted cubature rule is valid for all symmetric polynomials $f(\boldsymbol{\xi})$ of the form
\begin{equation}\label{kostka}
f(\boldsymbol{\xi})=     P_{\texttt{a};\mu}  (\boldsymbol{\xi};q) P_{\texttt{a};\nu}  (\boldsymbol{\xi};q )   \quad\text{with}\quad \mu\in  \Lambda^{(m,n)}_{\texttt{a}},\ \nu\in  \Lambda^{(m-1,n)}_{\texttt{a}}.
\end{equation}
The products in question actually span $\mathbb{P}^{(2m-1,n)}_{\texttt{a}}$
(because the monomial expansion of $f(\boldsymbol{\xi})$ \eqref{kostka} contains $M_{\texttt{a};\mu+\nu}(\boldsymbol{\xi})$ and monomial symmetric functions $M_{\texttt{a};\kappa}(\boldsymbol{\xi})$ corresponding to dominant weights $\kappa$ that are smaller than $\mu+\nu$ in the
dominance partial order).
The cubature rule thus  follows for general symmetric polynomials $f(\boldsymbol{\xi})$ in $\mathbb{P}^{(2m-1,n)}_{\texttt{a}}$ by linearity.
\end{proof}

The following corollary interprets Theorem \ref{hwc-a:thm} as an exact cubature rule for the integration
of a class of rational symmetric  functions against density of the circular  unitary ensemble (CUE).

\begin{corollary}[Cubature in CUE]\label{hwc-a:cor}
For $q\in (-1,1)$ and $m\in\mathbb{Z}_{>0}$, one has that
\begin{subequations}
\begin{equation*}\label{hwcr:a}
\frac{1}{(2\pi )^{n-1} n^{1/2}}   \int_{\mathbb{A}^{(n)}_{\texttt{a}}}   R_{\texttt{a}}(\boldsymbol{\xi}) \rho_{\texttt{a}} (\boldsymbol{\xi})  \text{d} \boldsymbol{\xi}
=
\sum_{\lambda\in\Lambda^{(m,n)}_{\texttt{a}}}      R_{\texttt{a}}\bigl( \boldsymbol{\xi}^{(m,n)}_{\texttt{a};\lambda}  \bigr)   \rho_{\texttt{a}} \bigl( \boldsymbol{\xi}^{(m,n)}_{\texttt{a};\lambda}\bigr) \Delta^{(m,n)}_{\texttt{a};\lambda } 
\end{equation*}
with
$\Delta^{(m,n)}_{\texttt{a};\lambda }:=  | C_{\texttt{a}}(\boldsymbol{\xi}^{(m,n)}_{\texttt{a};\lambda} ;q) |^2\hat{\Delta}^{(m,n)}_{\texttt{a};\lambda }$ and $
R_{\texttt{a}}(\boldsymbol{\xi}):=   \frac{f(\boldsymbol{\xi})}{{O}_{\texttt{a}}(\boldsymbol{\xi};q) }
$, where the denominator is of the form
\begin{equation*}
O_{\texttt{a}}(\boldsymbol{\xi};q):=\prod_{1\leq j <k\leq n}  \bigl(1-2q\cos(\xi_j-\xi_k) +q^2\bigr)
\end{equation*}
and the numerator $f(\boldsymbol{\xi})$ is allowed to be any symmetric polynomial in $\mathbb{P}^{(2m-1,n)}_{\texttt{a}}$.
\end{subequations}
\end{corollary}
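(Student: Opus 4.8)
The plan is to read off the corollary directly from Theorem~\ref{hwc-a:thm}, by rewriting the Hall--Littlewood weight $|C_{\texttt{a}}(\boldsymbol{\xi};q)|^{-2}$ in terms of the CUE density $\rho_{\texttt{a}}$ and the prescribed denominator $O_{\texttt{a}}$. The crux is an elementary factorization of the modulus of $C_{\texttt{a}}$. Indeed, from Eq.~\eqref{Cp:a} together with the identities $|1-qe^{-i\theta}|^2=1-2q\cos\theta+q^2$ and $|1-e^{-i\theta}|^2=2-2\cos\theta=|e^{i\xi_j}-e^{i\xi_k}|^2$ (applied with $\theta=\xi_j-\xi_k$), one computes termwise that
\begin{equation*}
|C_{\texttt{a}}(\boldsymbol{\xi};q)|^2=\prod_{1\le j<k\le n}\frac{1-2q\cos(\xi_j-\xi_k)+q^2}{|e^{i\xi_j}-e^{i\xi_k}|^2}=\frac{O_{\texttt{a}}(\boldsymbol{\xi};q)}{\rho_{\texttt{a}}(\boldsymbol{\xi})},
\end{equation*}
whence $|C_{\texttt{a}}(\boldsymbol{\xi};q)|^{-2}=\rho_{\texttt{a}}(\boldsymbol{\xi})/O_{\texttt{a}}(\boldsymbol{\xi};q)$. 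This single identity carries the entire content of the reduction.

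First I would feed an arbitrary numerator $f\in\mathbb{P}^{(2m-1,n)}_{\texttt{a}}$ into the cubature rule \eqref{hwc:a}. Inserting the factorization above into the left-hand side turns $\int f\,|C_{\texttt{a}}|^{-2}\,\text{d}\boldsymbol{\xi}$ into $\int (f/O_{\texttt{a}})\,\rho_{\texttt{a}}\,\text{d}\boldsymbol{\xi}=\int R_{\texttt{a}}\,\rho_{\texttt{a}}\,\text{d}\boldsymbol{\xi}$, which is precisely the integral asserted in the corollary. On the right-hand side I would substitute the definition $\Delta^{(m,n)}_{\texttt{a};\lambda}=|C_{\texttt{a}}(\boldsymbol{\xi}^{(m,n)}_{\texttt{a};\lambda};q)|^2\hat{\Delta}^{(m,n)}_{\texttt{a};\lambda}$, then replace $|C_{\texttt{a}}(\boldsymbol{\xi}^{(m,n)}_{\texttt{a};\lambda};q)|^2$ by $O_{\texttt{a}}(\boldsymbol{\xi}^{(m,n)}_{\texttt{a};\lambda};q)/\rho_{\texttt{a}}(\boldsymbol{\xi}^{(m,n)}_{\texttt{a};\lambda})$ and $R_{\texttt{a}}$ by $f/O_{\texttt{a}}$; the factors $O_{\texttt{a}}$ and $\rho_{\texttt{a}}$ cancel in pairs, so that each summand $R_{\texttt{a}}(\boldsymbol{\xi}^{(m,n)}_{\texttt{a};\lambda})\,\rho_{\texttt{a}}(\boldsymbol{\xi}^{(m,n)}_{\texttt{a};\lambda})\,\Delta^{(m,n)}_{\texttt{a};\lambda}$ collapses to $f(\boldsymbol{\xi}^{(m,n)}_{\texttt{a};\lambda})\,\hat{\Delta}^{(m,n)}_{\texttt{a};\lambda}$. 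Matching this node by node against the right-hand side of \eqref{hwc:a} yields the asserted formula.

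Since this is essentially a bookkeeping reformulation of Theorem~\ref{hwc-a:thm}, there is no genuine analytic obstacle; the only point deserving a remark is the well-definedness of $R_{\texttt{a}}$, which I would verify to license the cancellation. Because $1-2q\cos\theta+q^2=|1-qe^{i\theta}|^2\ge(1-|q|)^2>0$ for $-1<q<1$, the denominator $O_{\texttt{a}}(\boldsymbol{\xi};q)$ is strictly positive on all of $\mathbb{R}^n$; hence $R_{\texttt{a}}$ is regular along the closure of $\mathbb{A}^{(n)}_{\texttt{a}}$ and at every node $\boldsymbol{\xi}^{(m,n)}_{\texttt{a};\lambda}$, so both sides are finite and the manipulations are legitimate. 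Finally, the rational character advertised in the statement is exhibited by factoring each quadratic as $(1-qe^{i(\xi_j-\xi_k)})(1-qe^{-i(\xi_j-\xi_k)})$, displaying $O_{\texttt{a}}$ as the product responsible for simple poles of $R_{\texttt{a}}$ along the complex hyperplanes $z_j=qz_k$ and $z_k=qz_j$ (with $z_l=e^{i\xi_l}$).
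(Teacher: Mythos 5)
Your proposal is correct and follows the paper's own route exactly: the paper also deduces the corollary from Theorem \ref{hwc-a:thm} via the single identity $|C_{\texttt{a}}(\boldsymbol{\xi};q)|^{-2}=\rho_{\texttt{a}}(\boldsymbol{\xi})/O_{\texttt{a}}(\boldsymbol{\xi};q)$, which you verify termwise. Your additional remarks on the strict positivity of $O_{\texttt{a}}$ and the location of the poles are accurate but not needed for the argument.
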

\begin{proof}
Immediate from Theorem \ref{hwc-a:thm}  via the identity  \begin{equation*}  | C_{\texttt{a}}(\boldsymbol{\xi} ;q) |^{-2} =  \rho_{\texttt{a}} (\boldsymbol{\xi}) /{O}_{\texttt{a}}(\boldsymbol{\xi};q) .  \end{equation*}
\end{proof}

\begin{remark}\label{non-minimal:rem}
When $n=2$, Theorem \ref{hwc-a:thm} boils down to a special instance  of  the quadrature rule on $m+1$ nodes presented  in \cite[Theorem 5]{die-ems:quadrature} (viz., with
$d=\tilde{d}=1$, $\epsilon_+=\tilde{\epsilon}_+=0$, $\epsilon_-=\tilde{\epsilon}_-=1$, and $\alpha_1=\tilde{\alpha}_1=-q$, respectively).
The corresponding degree of exactness is $\texttt{D}=2m-1$, which is off by two from the optimal Gaussian degree $2m+1$. More generally, via
a change of variables of the form  (cf. e.g.  \cite[Section 3.4]{mun:group}, \cite[Section 5.1]{li-xu:discrete} or \cite[Section 3]{hri-mot-pat:cubature}):
\begin{equation}\label{vchange}
X_j:=
\begin{cases}
\frac{1}{2}\left( M_{\texttt{a};\omega_j}(\boldsymbol{\xi})+ M_{\texttt{a};\omega_{n-j}}(\boldsymbol{\xi})\right) &\text{if}\
j=1,\ldots, \lfloor \frac{n-1}{2} \rfloor , \\
\frac{1}{\sqrt{2}}M_{\texttt{a};\omega_j}(\boldsymbol{\xi})&\text{if}\
j=\frac{n}{2} ,\\
\frac{1}{2i}\left( M_{\texttt{a};\omega_j}(\boldsymbol{\xi})-M_{\texttt{a};\omega_{n-j}}(\boldsymbol{\xi})\right)&\text{if}\
j=\lceil \frac{n+1}{2}\rceil ,\ldots ,n-1,
\end{cases}
\end{equation}
Theorem \ref{hwc-a:thm} can be reformulated as an exact cubature rule for  $f\in \Pi^{(2m-1,n-1)}$ supported on $\dim (\Pi^{(m,n-1)})$ nodes, where $\Pi^{(\texttt{D},n-1)}$  refers to the $\binom{\texttt{D}+n-1}{\texttt{D}}$-dimensional space of all polynomials in $X_1,\ldots ,X_{n-1}$ of total degree at most $\texttt{D}$:
\begin{align}
\frac{1}{\bigl(\pi \sqrt{2} \bigr)^{n-1} }\int_{\text{A}^{(n)}_{\texttt{a}} }  f(X_1,\ldots X_{n-1}) & \frac{\sqrt{\rho_{\texttt{a}}(X_1,\ldots,X_{n-1})}}{O_{\texttt{a}}(X_1,\ldots,X_{n-1};q)}  \text{d}X_1\cdots \text{d}X_{n-1} \\
&= \sum_{\lambda\in\Lambda^{(m,n)}_{\texttt{a}}}      f \bigl( \boldsymbol{X}^{(m,n)}_{\texttt{a};\lambda}  \bigr) \hat{\Delta}^{(m,n)}_{\texttt{a};\lambda } .\nonumber
\end{align}
Here $\rho_{\texttt{a}}$ and $O_{\texttt{a}}$ refer to the transformed functions expressed in the new coordinates $X_1,\ldots,X_{n-1}$, and
\begin{align*}
\text{A}^{(n)}_{\texttt{a}} & :=\left\{ \bigl(X_1(\boldsymbol{\xi}),\ldots ,X_{n-1}(\boldsymbol{\xi})\bigr) \mid \boldsymbol{\xi}\in\mathbb{A}^{(n)}_{\texttt{a}}\right\},\\
 \boldsymbol{X}^{(m,n)}_{\texttt{a};\lambda} &:= \bigl(X_1( \boldsymbol{\xi}^{(m,n)}_{\texttt{a};\lambda} ),\ldots ,X_{n-1}( \boldsymbol{\xi}^{(m,n)}_{\texttt{a};\lambda} )\bigr) .
 \end{align*}
 To perform this coordinate transformation one uses that on the hyperplane $\xi_1+\cdots+\xi_n=0$  
  the Jacobian is given by
 $\left| \frac{\partial (X_1,\ldots,X_{n-1})}{\partial (\xi_1,\ldots,\xi_{n-1})}\right| =\bigl(\frac{1}{\sqrt{2}}\bigr)^{n-1} \sqrt{\rho_{\texttt{a}}(\boldsymbol{\xi})}$  (cf. \cite[Lemma 5.5]{li-xu:discrete} or \cite[Proposition 4]{hri-mot-pat:cubature}) and the volume form reads
 $\text{d}\boldsymbol{\xi}=\sqrt{n}\,\text{d}\xi_1\cdots\text{d}\xi_{n-1}$.
 The linear isomorphism between
the spaces $\mathbb{P}^{(\texttt{D},n)}_{\texttt{a}}$  and
$\Pi^{(\texttt{D},n-1)}$ induced by this change of variables
reveals that the number of nodes to achieve the exact integration for all
$f\in \mathbb{P}^{(2m-1,n)}_{\texttt{a}}$ is bounded from below by the (Gaussian) value $\dim \bigl(\mathbb{P}^{(m-1,n)}_{\texttt{a}}\bigr)$ (cf. e.g. \cite[Chapter 3.8]{dun-xu:orthogonal}). The number of
nodes employed by
the cubature rule in Theorem \ref{hwc-a:thm} thus exceeds this lower bound by
\begin{equation*}
\dim \bigl(\mathbb{P}^{(m,n)}_{\texttt{a}}\bigr)-\dim \bigl(\mathbb{P}^{(m-1,n)}_{\texttt{a}}\bigr)=\binom{n+m-2}{m}=\dim \bigl(\mathbb{P}^{(m,n-1)}_{\texttt{a}}\bigr) .
\end{equation*}
 \end{remark}

\begin{remark}
The symmetric functions $R_{\texttt{a}}(\boldsymbol{\xi})=   \frac{f(\boldsymbol{\xi})}{{O}_{\texttt{a}}(\boldsymbol{\xi};q) } $  admit
simple poles supported on the zero locus of the denominator $O_{\texttt{a}}(\boldsymbol{\xi};q)$.  For $0<q<1$ the pole locus in question consists of the complex hyperplanes
\begin{equation*}
\xi_j-\xi_k = i\log (q) \mod 2\pi\qquad (1\leq j\neq k\leq n), 
\end{equation*}
whereas for $-1<q<0$ it consists of the complex hyperplanes 
\begin{equation*}
\xi_j-\xi_k =\pi+ i\log (-q) \mod 2\pi\qquad (1\leq j\neq k\leq n).
\end{equation*}
At the boundary of the parameter domain $-1<q<1$ this complex hyperplane arrangement approximates itself to the closure of the integration domain $\mathbb{A}^{(n)}_{\texttt{a}}$ \eqref{A:a}.
Indeed, 
for $q\to 1$ the pole locus intersects the closure of $\mathbb{A}^{(n)}_{\texttt{a}}$  at the boundary hyperplanes $\xi_j-\xi_{j+1}=0$ ($j=1,\ldots n-1$) and $\xi_1-\xi_n=2\pi$, while for  
$q\to -1$ the intersection stems from the hyperplanes passing through the interior: $\xi_j-\xi_k =\pi$ ($1\leq j<k\leq n$).
\end{remark}

\section{Cubature rules associated with hyperoctahedral Hall-Littlewood polynomials}\label{sec3}
In this  section the above construction is adapted for the compact symplectic group $Sp (n;\mathbb{H})$. The pertinent Haar measure corresponds  to the density of the
circular quaternion ensemble
\begin{equation}
\rho_{\texttt{b}}(\boldsymbol{\xi}):=2^{n(n+1)}\prod_{1\leq j\leq n}   \bigl(1-\cos^2(\xi_j)\bigr)  \prod_{1\leq j<k\leq n}  \bigl(\cos (\xi_j)-\cos(\xi_k)\bigr)^2 
\end{equation}
on the fundamental domain
\begin{equation}\label{A:b}
\mathbb{A}^{(n)}_{\texttt{b}} := \{  \boldsymbol{\xi}=(\xi_1,\ldots ,\xi_n) \in\mathbb{R}^n \mid \pi > \xi_1 >\xi_2>\cdots >\xi_n > 0\} .
\end{equation}
Macdonald's hyperoctahedral Hall-Littlewood polynomials produce in this situation cubature formulas for the integration
of symmetric functions in $z_j=\cos(\xi_j)$ ($j=1,\ldots ,n$)  over the fundamental domain $\mathbb{A}^{(n)}_{\texttt{b}} $ with respect to the density $\rho_{\texttt{b}}(\boldsymbol{\xi})$.

\subsection{Hyperoctahedral Hall-Littlewood polynomials}
Macdonald's hyperoctahedral Hall-Littlewood polynomials are a variant of the Hall-Littlewood polynomials associated with the hyperoctahedral group of signed permutations, which can be
retrieved from \cite[\S 10]{mac:orthogonal} upon picking the root system $R$ of type $BC_n$.
The polynomials in question are labeled by $Sp (n;\mathbb{H})$ dominant weight vectors
\begin{equation}\label{dominant-cone:b}
\Lambda^{(n)}_{\texttt{b}}:= \{   (\mu_1,\ldots ,\mu_n)\in\mathbb{Z}^n\mid  \mu_1\geq\cdots\geq\mu_n\geq 0\} 
\end{equation}
that are nonnegatively generated by the fundamental basis $e_1+\cdots +e_j$, $j=1,\ldots ,n$   (cf. \cite[Planche III]{bou:groupes}). 
Here we restrict attention to a two-parameter subfamily of these polynomials given by
\begin{subequations}
\begin{align}\label{HLp:b}
P_{\texttt{b};\mu} (\boldsymbol{\xi} ;q,q_0)& :=   \\
 \sum_{\substack{ \sigma\in S_n \\ \epsilon\in \{ 1,-1\}^n}}  & C_{\texttt{b}}(\epsilon_1 \xi_{\sigma_1},\ldots , \epsilon_n \xi_{\sigma_n};q,q_0)
\exp (i\epsilon_1 \xi_{\sigma_1}\mu_1+\cdots +i \epsilon_n \xi_{\sigma_n} \mu_n)  , \nonumber
\end{align}
with $\mu\in \Lambda^{(n)}_{\texttt{b}}$ and
\begin{eqnarray}\label{Cp:b}
\lefteqn{C_{\texttt{b}}(\xi_1,\ldots ,\xi_n;q,q_0)=C_{\texttt{b}}(\boldsymbol{\xi};q,q_0) :=}
 && \\
&& 
\prod_{1\leq j\leq n} \frac{1-q_0 e^{-i\xi_j}}{1-e^{-2i\xi_j}}
 \prod_{1\leq j<k \leq n} \left(\frac{1-q e^{-i(\xi_{j}-\xi_k)}}{1-e^{-i(\xi_{j}-\xi_k)}}\right)\left(  \frac{1-q e^{-i(\xi_{j}+\xi_k)}}{1-e^{-i(\xi_{j}+\xi_k)}} \right)  .\nonumber
\end{eqnarray}
\end{subequations}
The symmetrization is now with respect to the action of the hyperoctahedral group of signed permutations, which involves summing over  all
 $\sigma= { \bigl( \begin{smallmatrix}1& 2& \cdots & n \\
 \sigma_1&\sigma_2&\cdots & \sigma_n
 \end{smallmatrix}\bigr)}\in S_n$ and all
$\epsilon=(\epsilon_1,\ldots,\epsilon_n)\in \{ 1,-1\}^n$.  For $-1<q,q_0<1$ the polynomials $P_{\texttt{b};\mu} (\boldsymbol{\xi} ;q,q_0)$ satisfy the following orthogonality relations  \cite[\S 10]{mac:orthogonal}:
\begin{align}\label{or-c:b}
\frac{1}{(2\pi )^{n} }   \int_{\mathbb{A}^{(n)}_{\texttt{b}}}    P_{\texttt{b};\mu}  (\boldsymbol{\xi};q,q_0) \overline{P_{\texttt{b};\nu}  (\boldsymbol{\xi};q ,q_0) }    | C_{\texttt{b}}(\boldsymbol{\xi};q, q_0) |^{-2}  \text{d} 
\boldsymbol{\xi} &  \\
=
\begin{cases} 
 \prod_{\substack{1\leq j<k\leq n\\ \mu_j-\mu_k=0}}  \frac{1-q^{1+k-j}}{1-q^{k-j}} &\text{if}\  \nu= \mu ,\\
0 &\text{if}\  \nu\neq \mu 
\end{cases}
 & \nonumber
\end{align}
($\mu,\nu\in\Lambda^{(n)}_{\texttt{b}}$).

\begin{remark}
In the orthogonality relations \eqref{or-c:b}  the integration is meant with respect to the standard Lebesgue  measure $\text{d}\boldsymbol{\xi}=\text{d}\xi_1\cdots \text{d}\xi_n$ for $\mathbb{R}^n$. In particular:
$\int_{\mathbb{A}^{(n)}_{\texttt{b}}} \text{d}\boldsymbol{\xi} = \text{Vol} \bigl(\mathbb{A}^{(n)}_{\texttt{b}}\bigr) = {(\pi)^{n}}/{n!} $.
\end{remark}

\subsection{Finite-dimensional orthogonality relations}
In the same spirit as before, the construction of an appropriate lattice Laplacian (with Robin type boundary conditions) on the finite alcove
\begin{equation}\label{dwv:b}
\Lambda^{(m,n)}_{\texttt{b}}:= \{   (\mu_1,\ldots ,\mu_n)\in\mathbb{Z}^n\mid m\geq \mu_1\geq\cdots\geq\mu_n\geq 0\} 
\end{equation}
of level $m\in\mathbb{Z}_{> 0}$, has given rise to a novel finite-dimensional orthogonality relation for the hyperoctahedral Hall-Littlewood polynomials of the form \cite[Section 11.4]{die-ems:orthogonality}:
\begin{subequations}
\begin{equation}\label{d-ortho:b}
\sum_{\mu\in\Lambda^{(m,n)}_{\texttt{b}}}      P_{\texttt{b};\mu} \bigl( \boldsymbol{\xi}^{(m,n)}_{\texttt{b};\lambda} ;q, q_0 \bigr)     \overline{P_{\texttt{b};\mu} \bigl( \boldsymbol{\xi}^{(m,n)}_{\texttt{b};\kappa}  ;q, q_0 \bigr) }
 \delta^{(m,n)}_{\texttt{b};\mu }  (q)
=
0 \quad \text{if}\  \kappa \neq  \lambda
\end{equation}
($\lambda,\kappa \in \Lambda^{(m,n)}_{\texttt{b}}$), where
 \begin{equation}\label{w:b}
\delta^{(m,n)}_{\texttt{b};\mu }(q):= \prod_{\substack{1\leq j<k\leq n\\ \mu_j-\mu_k=0}}  \frac{1-q^{k-j}}{1-q^{1+k-j}} .
\end{equation}
\end{subequations}
In the current situation,
the positions of  the nodes $ \boldsymbol{\xi}^{(m,n)}_{\texttt{b};\lambda}$, $\lambda\in\Lambda^{(m,n)}_{\texttt{b}}$ parametrizing the eigenvalues of the lattice Laplacian turn out to depend on three
parameters in the interval $(-1,1)$: $q_0$ (which governs the boundary condition at the wall $\mu_n=0$), $q$ (which governs the boundary condition at the walls $\mu_j=\mu_{j+1}$, $j\in \{ 1,\ldots ,n-1\}$), and an additional parameter $q_1$ (which governs the boundary condition at the affine wall $\mu_1=m$) \cite{die-ems:orthogonality}.

\subsection{Positions of the nodes}
In the absence of explicit formulas for the positions of the nodes in Eqs. \eqref{d-ortho:b}, \eqref{w:b} at general values of the parameters $q,q_0,q_1\in (-1,1)$, we recur again to a characterization in terms of the minimum of an associated Morse function
from  \cite[Section 10.1]{die-ems:orthogonality} so as to enable numerical computations.

Specifically, the position of the node  $ \boldsymbol{\xi}^{(m,n)}_{\texttt{b};\lambda}$, $\lambda\in\Lambda^{(m,n)}_{\texttt{b}}$ turns out to be governed by the unique global minimum of the following semi-bounded Morse function
$V^{(m,n)}_{\texttt{b};\lambda} :\mathbb{R}^n\to\mathbb{R}$:
\begin{subequations}
\begin{multline}\label{morse-b}
V^{(m,n)}_{\texttt{b};\lambda}  (\boldsymbol{\xi}):= \sum_{1\le j < k \le n }
\left(
    \int_0^{\xi_j+\xi_k} v_q(\vartheta )\text{d}\vartheta+   \int_0^{\xi_j-\xi_k} v_q(\vartheta)\text{d}\vartheta 
\right) \\
+ \sum_{1\leq j\leq n} \left(
(m+1)\xi_j^2-2\pi (\varrho_{\texttt{b};j}+\lambda_j)\xi_j
+  \int^{\xi_j}_0 \bigl(v_{q_0}(\vartheta )+v_{q_1}(\vartheta )\bigr)\text{d}\vartheta \right) ,
\end{multline}
where $v_{q} (\vartheta)$ is of the form in Eq. \eqref{uv},  and
\begin{equation}
\varrho_{\texttt{b};j} :=n+1-j\quad  (j=1,\ldots ,n).
\end{equation}
\end{subequations} 
As before, the existence and uniqueness of the global minimum $ \boldsymbol{\xi}^{(m,n)}_{\texttt{b};\lambda}$ of $V^{(m,n)}_{\texttt{b};\lambda}  (\boldsymbol{\xi})$ is 
ensured by the unbounded radial growth $V^{(m,n)}_{\texttt{b};\lambda}  (\boldsymbol{\xi})\to +\infty$ for $|\boldsymbol{\xi}|\to\infty$, in combination with the convexity:
\begin{align}\label{Hesse:b}
&H^{(n,m)}_{\texttt{b};j,k}:=\partial_{\xi_j}\partial_{\xi_k} V^{(n,m)}_{ \texttt{b};\lambda} (\boldsymbol{\xi}) \\
&=
\begin{cases}
2(m+1) + u_{q_0} (\xi_j) + u_{q_1}(\xi_j) + \sum_{\substack{1\leq l\leq n \\l\neq j}}\bigl (u_q(\xi_j+\xi_l) +u_q(\xi_j-\xi_l) \bigr) & \text{if $ k=j$}\\
u_q(\xi_j+\xi_k) -u_q(\xi_j-\xi_k) & \text{if $k\neq j$}\\
\end{cases} \nonumber
\end{align}
(with $u_q(\theta)$ taken from  Eq. \eqref{uv}), so
\begin{align*}
\sum_{1\leq j,k\leq n}  H^{(m,n)}_{\texttt{b};j,k} x_j x_k   
= & \sum_{1\leq j\leq n} \Bigl( 2(m+1)+ u_{q_0}(\xi_j) + u_{q_1}(\xi_j)\Bigr) x_j^2 \\
&+  \sum_{1\leq j<k\leq n} \Bigl( u_q(\xi_j +\xi_k)(x_j+x_k)^2+
   u_q(\xi_j -\xi_k)(x_j-x_k)^2\Bigr) \\
 \ge & 2(m+1) \sum_{1\leq j\leq n} x_j^2.
\end{align*}

The equations $ \partial_{\xi_j}V^{(m,n)}_{\texttt{b};\lambda}(\boldsymbol{\xi})=0$ for the numerical computation of  the position of the node $ \boldsymbol{\xi}^{(m,n)}_{\texttt{b};\lambda}$ using Newton's method now become:
\begin{equation}\label{CEQ:b}
2(m+1)\xi_j+v_{q_0}(\xi_j)+v_{q_1}(\xi_j)+\sum_{\substack{1\leq k\leq n\\k\neq j}} \Bigl( v_q(\xi_j+\xi_k)+ v_q(\xi_j-\xi_k)\Bigr)=2\pi (\varrho_{\texttt{b};j}+\lambda_j) ,
\end{equation}
for $j=1,\ldots,n$.  A corresponding suitable initial estimate stemming from the explicit solution of this system at the linear point $(q,q_0,q_1)=(0,0,0)$ in parameter space is given by 
$\xi_j=\frac{\pi (\lambda_j+\varrho_{\texttt{b};j})}{n+m+1}$, $j=1,\ldots,n$ (cf Remark \ref{bounds-b:rem} below).

\begin{remark}\label{bounds-b:rem}
Upon adapting Remarks \ref{regularity-a:rem} and \ref{bounds-a:rem}, it is deduced from Eq.  \eqref{CEQ:b} that $\boldsymbol{\xi}^{(m,n)}_{\texttt{b};\lambda}\in\mathbb{A}^{(n)}_{\texttt{b}}$ \eqref{A:b} for any $\lambda\in\Lambda^{(m,n)}_{\texttt{b}}$ \eqref{dwv:b}, and that at
$\boldsymbol{\xi}=\boldsymbol{\xi}^{(m,n)}_{\texttt{b};\lambda}$:
 \begin{subequations}
\begin{equation}\label{gap-b:a}
\frac{\pi(n+1-j+\lambda_j)}{m+1+\kappa_{\texttt{b};-}(q,q_0,q_1)} \leq \xi_j \leq \frac{\pi(n+1-j+\lambda_j)}{m+1+\kappa_{\texttt{b};+}(q,q_0,q_1)}
\end{equation}
for $1\leq j\leq n$,  and
\begin{equation}\label{gap-b:b}
\frac{\pi(k-j+\lambda_j-\lambda_k)}{m+1+\kappa_{\texttt{b};-}(q,q_0,q_1)} \leq \xi_j-\xi_k\leq \frac{\pi(k-j+\lambda_j-\lambda_k)}{m+1+\kappa_{\texttt{b};+}(q,q_0,q_1)} 
\end{equation}
for $1 \le  j < k \le n$, where
\begin{align}\label{gap-b:c}
\kappa_{\texttt{b};\pm}(q,q_0,q_1):=&\frac{1}{2}\left( \frac{1-|q_0|}{1 + |q_0|} \right)^{\pm 1}+\frac{1}{2} \left(\frac{1-|q_1|}{1+ |q_1|} \right)^{\pm 1}  \\
&+ (n-1)\left( \frac{1-|q|}{1+|q|}\right)^{\pm 1} . \nonumber
\end{align}
\end{subequations}
These bounds confirm that for vanishing parameter values:
\begin{equation}
\left.  \boldsymbol{\xi}^{(m,n)}_{\texttt{b};\lambda}\right|_{q,q_0,q_1=0}=\frac{\pi (\lambda+\varrho_{\texttt{b}})}{n+m+1}\qquad (\lambda\in\Lambda^{(m,n)}_{\texttt{b}}),
\end{equation}
where $\varrho_{\texttt{b}}:=(\varrho_{\texttt{b},1},\ldots , \varrho_{\texttt{b},n})$.
Moreover, since at $\boldsymbol{\xi}=\frac{\pi (\lambda+\varrho_{\texttt{b}})}{n+m+1}$ ($\lambda\in\Lambda^{(m,n)}_{\texttt{b}}$) the inequalities in Eqs. \eqref{gap-b:a}--\eqref{gap-b:c} are satisfied for any
$-1<q,q_0,q_1<1$, this special point provides a convenient initial estimate when computing the position of the node $\boldsymbol{\xi}^{(m,n)}_{\texttt{b};\lambda}$ numerically from Eq. \eqref{CEQ:b} via Newton's method.
\end{remark}

\subsection{Cubature rule}\label{cub:b:sec}
Let
\begin{equation}
\mathbb{P}^{(m,n)}_{\texttt{b}}:=  \text{Span}_{\mu\in\Lambda^{(m,n)}_{\texttt{b}}}   \{    M_{\texttt{b};\mu} (\boldsymbol{\xi})  \}  ,
\end{equation}
with
\begin{subequations}
\begin{equation}\label{smonomials:b}
M_{\texttt{b};\mu} (\boldsymbol{\xi}) :=  \frac{1}{N_{\texttt{b}; \mu}}   \sum_ {\substack{\sigma\in S_{n} \\ \epsilon\in \{ 1,-1\}^n}}   
\exp (i \epsilon_1 \xi_{\sigma_1}\mu_1+\cdots +i \epsilon_n \xi_{\sigma_{n}} \mu_{n})  
\end{equation}
 normalized such that each exponential term on the RHS occurs with multiplicity one:
\begin{equation}\label{stabilizer:b}
  N_{\texttt{b};\mu} := 2^{\text{m}_0(\mu)} \prod_{\substack{1\leq j<k\leq n\\ \mu_j-\mu_k=0}}  \frac{1+k-j}{k-j}    
  \end{equation}
\end{subequations}
(cf. Eq. \eqref{mult}). Notice that $\mathbb{P}^{(m,n)}_{\texttt{b}}$ amounts to the
$\binom{m+n}{m}$-dimensional space of symmetric polynomials of degree at most $m$ in each of the variables $z_j=\cos (\xi_j)$ ($j\in \{ 1,\ldots ,n\}$). 

\begin{theorem}[Hyperoctahedral Hall-Littlewood Cubature]\label{hwc-b:thm}
For $q,q_0,q_1\in (-1,1)$ and $m\in\mathbb{Z}_{> 0}$, the following cubature rule holds true for any  symmetric polynomial
$f(\boldsymbol{\xi})$  in $\mathbb{P}^{(2m,n)}_{\texttt{b}}$:
\begin{subequations}
\begin{equation}\label{hwc:b}
\frac{1}{(2\pi )^{n} }    \int_{\mathbb{A}^{(n)}_{\texttt{b}}}   f(\boldsymbol{\xi})  | C_{\texttt{b}}(\boldsymbol{\xi};q,q_0) |^{-2} 
  \text{d} \boldsymbol{\xi}
=
\sum_{\lambda\in\Lambda^{(m,n)}_{\texttt{b}}}      f \bigl( \boldsymbol{\xi}^{(m,n)}_{\texttt{b};\lambda}  \bigr)   \hat{\Delta}^{(m,n)}_{\texttt{b};\lambda } ,
\end{equation}
with Christoffel weights given by
\begin{equation}\label{cw:b}
\hat{\Delta}^{(m,n)}_{\texttt{b};\lambda }:= 
\Biggl(\sum_{\mu\in \Lambda^{(m,n)}_{\texttt{b}}}   \left| P_{\texttt{b};\mu} \bigl(\boldsymbol{\xi}^{(m,n)}_{\texttt{b};\lambda} ;q,q_0\bigr) \right|^2  \delta^{(m,n)}_{\texttt{b};\mu } (q) \Biggr)^{-1}  .
\end{equation}
\end{subequations}
\end{theorem}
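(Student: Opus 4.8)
The plan is to mirror the proof of Theorem \ref{hwc-a:thm} exactly, replacing the type $A_{n-1}$ objects by their hyperoctahedral $BC_n$ counterparts. First I would observe that the finite-dimensional orthogonality relations \eqref{d-ortho:b}--\eqref{w:b} assert that the matrix
\begin{equation*}
\left[
\sqrt{\delta^{(m,n)}_{\texttt{b};\mu}(q)}\, P_{\texttt{b};\mu}\bigl(\boldsymbol{\xi}^{(m,n)}_{\texttt{b};\lambda};q,q_0\bigr)\sqrt{\hat{\Delta}^{(m,n)}_{\texttt{b};\lambda}}
\right]_{\mu,\lambda\in\Lambda^{(m,n)}_{\texttt{b}}}
\end{equation*}
is unitary, where $\hat{\Delta}^{(m,n)}_{\texttt{b};\lambda}$ is precisely the normalization \eqref{cw:b} that makes the $\lambda$-columns into unit vectors. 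Invoking column-row duality (as in \cite[Theorem 1]{die:finite-dimensional}) I then obtain the dual orthogonality relation
\begin{equation*}
\sum_{\lambda\in\Lambda^{(m,n)}_{\texttt{b}}} P_{\texttt{b};\mu}\bigl(\boldsymbol{\xi}^{(m,n)}_{\texttt{b};\lambda};q,q_0\bigr)\overline{P_{\texttt{b};\nu}\bigl(\boldsymbol{\xi}^{(m,n)}_{\texttt{b};\lambda};q,q_0\bigr)}\,\hat{\Delta}^{(m,n)}_{\texttt{b};\lambda}
= \begin{cases} 1/\delta^{(m,n)}_{\texttt{b};\mu}(q) & \nu=\mu,\\ 0 & \nu\neq\mu,\end{cases}
\end{equation*}
valid for all $\mu,\nu\in\Lambda^{(m,n)}_{\texttt{b}}$.

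Next I would compare this with the continuous orthogonality relation \eqref{or-c:b}. The crucial point is that the right-hand sides agree: both equal $\delta^{(m,n)}_{\texttt{b};\mu}(q)^{-1}=\prod_{1\le j<k\le n,\,\mu_j=\mu_k}\frac{1-q^{1+k-j}}{1-q^{k-j}}$ when $\nu=\mu$ and vanish otherwise. Hence the discrete quadrature sum reproduces the integral $\frac{1}{(2\pi)^n}\int_{\mathbb{A}^{(n)}_{\texttt{b}}}P_{\texttt{b};\mu}\overline{P_{\texttt{b};\nu}}\,|C_{\texttt{b}}|^{-2}\,\mathrm{d}\boldsymbol{\xi}$ exactly for all products $f=P_{\texttt{b};\mu}\overline{P_{\texttt{b};\nu}}$ with $\mu,\nu\in\Lambda^{(m,n)}_{\texttt{b}}$. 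Since the hyperoctahedral polynomials are real (the signed-permutation symmetrization is invariant under $\boldsymbol{\xi}\mapsto-\boldsymbol{\xi}$), the complex conjugate may be dropped and the admissible test functions are simply the products $P_{\texttt{b};\mu}P_{\texttt{b};\nu}$ with $\mu,\nu\in\Lambda^{(m,n)}_{\texttt{b}}$.

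The remaining step is to show that these products span $\mathbb{P}^{(2m,n)}_{\texttt{b}}$, the space of symmetric polynomials of degree at most $m$ in each $z_j=\cos(\xi_j)$. Here I would argue by triangularity: the monomial expansion of $P_{\texttt{b};\mu}$ has leading term $M_{\texttt{b};\mu}$ together with lower terms $M_{\texttt{b};\kappa}$ for $\kappa<\mu$ in dominance, so the product $P_{\texttt{b};\mu}P_{\texttt{b};\nu}$ contains $M_{\texttt{b};\mu+\nu}$ plus dominance-lower monomials. As $\mu,\nu$ range over $\Lambda^{(m,n)}_{\texttt{b}}$ the sums $\mu+\nu$ exhaust all weights in $\Lambda^{(2m,n)}_{\texttt{b}}$, so by the usual unitriangular-change-of-basis argument the products span $\mathbb{P}^{(2m,n)}_{\texttt{b}}$ and the cubature rule follows by linearity. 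I expect the main obstacle to be bookkeeping rather than conceptual: one must verify that every weight in $\Lambda^{(2m,n)}_{\texttt{b}}$ is realizable as a sum $\mu+\nu$ of two weights from the level-$m$ alcove $\Lambda^{(m,n)}_{\texttt{b}}$ \eqref{dwv:b}, and confirm that the $BC_n$ dominance ordering on the monomial basis makes the transition matrix unitriangular---both of which are direct analogues of the type-$A$ facts used in Theorem \ref{hwc-a:thm}, but require care because the signed-permutation setting couples the $\pm$ signs into the ordering.
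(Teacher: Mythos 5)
Your proposal follows essentially the same route as the paper's own proof: the unitarity of the matrix built from the discrete orthogonality relations \eqref{d-ortho:b}--\eqref{w:b}, column-row duality to get the dual relations \eqref{or-d:b}, comparison with the continuous orthogonality \eqref{or-c:b} for the products $P_{\texttt{b};\mu}P_{\texttt{b};\nu}$ with $\mu,\nu\in\Lambda^{(m,n)}_{\texttt{b}}$ (using that these polynomials are real), and the dominance-triangularity spanning argument for $\mathbb{P}^{(2m,n)}_{\texttt{b}}$. The bookkeeping points you flag at the end do go through (e.g.\ $\kappa=\mu+\nu$ with $\mu_j=\min(\kappa_j,m)$, $\nu_j=\max(\kappa_j-m,0)$), and the paper treats them with the same brevity.
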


 \begin{proof}
It is immediate from the discrete orthogonality relations in Eqs. \eqref{d-ortho:b}, \eqref{w:b}  that the  following matrix is unitary:
\begin{equation*}
\left[ 
\sqrt{ \delta^{(m,n)}_{\texttt{b};\mu }(q)} P_{\texttt{b};\mu} \bigl( \boldsymbol{\xi}^{(m,n)}_{\texttt{b};\lambda} ;q,q_0\bigr)  \sqrt{\hat{\Delta}^{(m,n)}_{\texttt{b};\lambda }}
\right]_{\mu,\lambda\in \Lambda^{(m,n)}_{\texttt{b}}}  .
\end{equation*}
By `column-row duality' this means that for any $\mu,\nu\in \Lambda^{(m,n)}_{\texttt{b}}$:
\begin{align}\label{or-d:b}
\sum_{\lambda\in\Lambda^{(m,n)}_{\texttt{b}}}      P_{\texttt{b};\mu} \bigl( \boldsymbol{\xi}^{(m,n)}_{\texttt{b};\lambda} ;q, q_0 \bigr)    
\overline{  P_{\texttt{b};\nu} \bigl( \boldsymbol{\xi}^{(m,n)}_{\texttt{b};\lambda}  ;q, q_0 \bigr) } 
\hat{\Delta}^{(m,n)}_{\texttt{b};\lambda }  & \\
=
\begin{cases} 
1/ \delta^{(m,n)}_{\texttt{b};\mu }(q) &\text{if}\  \nu= \mu , \\
0 &\text{if}\  \nu\neq \mu .
\end{cases} & \nonumber
\end{align}
Upon comparing with the standard orthogonality relations for the corresponding hyperoctahedral Hall-Littlewood polynomials in Eq. \eqref{or-c:b},
 it is seen that the asserted cubature rule is valid for all symmetric polynomials $f(\boldsymbol{\xi})$ of the form
\begin{equation}\label{kostka:b}
f(\boldsymbol{\xi})=     P_{\texttt{b};\mu}  (\boldsymbol{\xi};q,q_0) P_{\texttt{b};\nu}  (\boldsymbol{\xi};q,q_0 )   \quad\text{with}\quad \mu,\nu\in  \Lambda^{(m,n)}_{\texttt{b}}
\end{equation}
(where we have used that $\overline{P_{\texttt{b};\nu}  (\boldsymbol{\xi};q,q_0 )}=P_{\texttt{b};\nu}  (\boldsymbol{\xi};q,q_0 )$).
Since the products in question span $\mathbb{P}^{(2m,n)}_{\texttt{b}}$
(because the monomial expansion of $f(\boldsymbol{\xi})$ \eqref{kostka:b} contains $M_{\texttt{b};\mu+\nu}(\boldsymbol{\xi})$ and symmetric monomials $M_{\texttt{b};\kappa}(\boldsymbol{\xi})$ with $\kappa$ smaller than $\mu+\nu$ in the
dominance partial order),
the cubature rule again  follows for general symmetric polynomials $f(\boldsymbol{\xi})$ in $\mathbb{P}^{(2m,n)}_{\texttt{b}}$ by linearity.
\end{proof}

Theorem \ref{hwc-b:thm} can be reinterpreted in turn as an exact cubature rule for the integration of a class of rational symmetric functions against the density of the
circular quaternion ensemble (CQE).

\begin{corollary}[Cubature in CQE]\label{hwc-b:cor}
For $q,q_0,q_1\in (-1,1)$ and $m\in\mathbb{Z}_{> 0}$, one has that
\begin{subequations}
\begin{equation*}\label{hwrc:b}
\frac{1}{(2\pi )^{n} }   \int_{\mathbb{A}^{(n)}_{\texttt{b}}}   R_{\texttt{b}}(\boldsymbol{\xi}) \rho_{\texttt{b}} (\boldsymbol{\xi})  \text{d} \boldsymbol{\xi}
=
\sum_{\lambda\in\Lambda^{(m,n)}_{\texttt{b}}}      R_{\texttt{b}}\bigl( \boldsymbol{\xi}^{(m,n)}_{\texttt{b};\lambda}  \bigr)   \rho_{\texttt{b}} \bigl( \boldsymbol{\xi}^{(m,n)}_{\texttt{b};\lambda}\bigr) \Delta^{(m,n)}_{\texttt{b};\lambda } 
\end{equation*}
with
$ \Delta^{(m,n)}_{\texttt{b};\lambda } :=    | C_{\texttt{b}}(\boldsymbol{\xi};q,q_0) |^{2} \hat{ \Delta}^{(m,n)}_{\texttt{b};\lambda } $ and
$
R_{\texttt{b}}(\boldsymbol{\xi}):=   \frac{f(\boldsymbol{\xi})}{O_{\texttt{b}}(\boldsymbol{\xi};q,q_0)} $, where the denominator is of the form
\begin{align*}
O_{\texttt{b}}(\boldsymbol{\xi};q,q_0):=\prod_{1\leq j <k\leq n} &  (1-2q\cos(\xi_j-\xi_k) +q^2)  (1-2q\cos(\xi_j+\xi_k) +q^2) \nonumber \\
&\times  \prod_{1\leq j\leq n}  (1-2q_0\cos(\xi_j) +q_0^2) .
\end{align*}
and the numerator $f(\boldsymbol{\xi})$ is allowed to be any symmetric polynomial in $\mathbb{P}^{(2m,n)}_{\texttt{b}}$.
\end{subequations}
\end{corollary}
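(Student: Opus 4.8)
The plan is to follow verbatim the template of the proof of Corollary \ref{hwc-a:cor}: I would reduce the assertion to Theorem \ref{hwc-b:thm} by establishing the single pointwise identity
\[
|C_{\texttt{b}}(\boldsymbol{\xi};q,q_0)|^{-2} = \frac{\rho_{\texttt{b}}(\boldsymbol{\xi})}{O_{\texttt{b}}(\boldsymbol{\xi};q,q_0)},
\]
valid on $\mathbb{A}^{(n)}_{\texttt{b}}$. Once this is in hand the corollary is immediate: writing the numerator as $f(\boldsymbol{\xi}) = R_{\texttt{b}}(\boldsymbol{\xi})\,O_{\texttt{b}}(\boldsymbol{\xi};q,q_0)$ turns the integrand $f\,|C_{\texttt{b}}|^{-2}$ on the left-hand side of Eq. \eqref{hwc:b} into $R_{\texttt{b}}\,\rho_{\texttt{b}}$, while on the right-hand side each summand $f(\boldsymbol{\xi}^{(m,n)}_{\texttt{b};\lambda})\,\hat{\Delta}^{(m,n)}_{\texttt{b};\lambda}$ becomes $R_{\texttt{b}}(\boldsymbol{\xi}^{(m,n)}_{\texttt{b};\lambda})\,\rho_{\texttt{b}}(\boldsymbol{\xi}^{(m,n)}_{\texttt{b};\lambda})\,\Delta^{(m,n)}_{\texttt{b};\lambda}$ upon recalling the definition $\Delta^{(m,n)}_{\texttt{b};\lambda} = |C_{\texttt{b}}(\boldsymbol{\xi}^{(m,n)}_{\texttt{b};\lambda};q,q_0)|^2\,\hat{\Delta}^{(m,n)}_{\texttt{b};\lambda}$. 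Because the numerator $f$ is allowed to range over all of $\mathbb{P}^{(2m,n)}_{\texttt{b}}$, which is precisely the class of test functions admissible in Theorem \ref{hwc-b:thm}, no additional admissibility check is required and the cubature rule transfers directly.

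To verify the identity I would take the modulus squared of $C_{\texttt{b}}$ factor by factor. Each rational factor of the shape $\tfrac{1-\alpha e^{-i\theta}}{1-e^{-i\theta}}$ contributes $|1-\alpha e^{-i\theta}|^2 = 1-2\alpha\cos\theta+\alpha^2$ to the numerator, producing exactly the constituent factors of $O_{\texttt{b}}$ (with $\theta=\xi_j$, $\alpha=q_0$ for the one-index factors, and $\theta=\xi_j\mp\xi_k$, $\alpha=q$ for the two families of pairwise factors). It then remains to show that the reciprocals of the remaining denominators reassemble into $\rho_{\texttt{b}}$. For the one-index factors one has $|1-e^{-2i\xi_j}|^2 = 2-2\cos(2\xi_j) = 4\sin^2\xi_j = 4(1-\cos^2\xi_j)$, matching the single-index part of $\rho_{\texttt{b}}$. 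For the pairwise part the decisive step is the product-to-sum identity
\[
\sin\!\left(\tfrac{\xi_j-\xi_k}{2}\right)\sin\!\left(\tfrac{\xi_j+\xi_k}{2}\right) = \tfrac{1}{2}\bigl(\cos\xi_k-\cos\xi_j\bigr),
\]
whence $16\,\sin^2\!\left(\tfrac{\xi_j-\xi_k}{2}\right)\sin^2\!\left(\tfrac{\xi_j+\xi_k}{2}\right) = 4(\cos\xi_j-\cos\xi_k)^2$, reproducing the Vandermonde-type factor of $\rho_{\texttt{b}}$ in the variables $\cos\xi_j$.

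I do not expect a genuine obstacle here, since the computation is elementary; the proof is "immediate from Theorem \ref{hwc-b:thm}" in the same sense as in the unitary case. The only point demanding a little care—and the only place where the symplectic ($BC_n$) setting genuinely differs from the unitary one—is the bookkeeping of the overall constant and the correct pairing of the trigonometric denominators with the \emph{two} families of pairwise factors $\xi_j-\xi_k$ and $\xi_j+\xi_k$. Collecting the powers of two via $4^{n}\cdot 4^{\binom{n}{2}} = 2^{\,n(n+1)}$ yields precisely the prefactor of $\rho_{\texttt{b}}$, completing the verification of the identity and hence the corollary.
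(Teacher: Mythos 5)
Your proposal is correct and coincides with the paper's proof, which likewise derives the corollary from Theorem \ref{hwc-b:thm} via the single identity $|C_{\texttt{b}}(\boldsymbol{\xi};q,q_0)|^{-2}=\rho_{\texttt{b}}(\boldsymbol{\xi})/O_{\texttt{b}}(\boldsymbol{\xi};q,q_0)$ (the paper states this identity without proof, whereas you verify it; your factor-by-factor computation, including the product-to-sum step and the count $4^{n}\cdot 4^{\binom{n}{2}}=2^{n(n+1)}$, checks out). No gaps.
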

\begin{proof}
Immediate from Theorem \ref{hwc-b:thm}  via the identity  \begin{equation*} | C_{\texttt{b}}(\boldsymbol{\xi};q,q_0) |^{-2} =   \rho_{\texttt{b}} (\boldsymbol{\xi}) / O_{\texttt{b}}(\boldsymbol{\xi};q,q_0) . \end{equation*}
\end{proof}

\begin{remark}\label{n=1-b:rem}
For $n=1$, Theorem \ref{hwc-b:thm} recovers another special instance  of  the quadrature rule on $m+1$ nodes presented  in \cite[Theorem 5]{die-ems:quadrature} (viz., with
$d=\tilde{d}=1$, $\epsilon_\pm=\tilde{\epsilon}_\pm=1$, and $\alpha_1=-q_0$, $\tilde{\alpha}_1=-q_1$, respectively).
Its degree of exactness $\texttt{D}=2m$ is only one shy of the optimal Gaussian degree $2m+1$. For general $n$ we can proceed as in Remark \ref{non-minimal:rem} and
perform a  change of variables of the form
\begin{equation}
X_j:=M_{\texttt{b}; e_1+\cdots +e_j}(\boldsymbol{\xi}), \quad j=1,\ldots ,n
\end{equation}
(cf.  e.g. \cite[Section 7]{hof-wit:generalized}, \cite[Section 3]{moo-pat:cubature} and \cite[Section 3]{hri-mot-pat:cubature}),  so as to recast Theorem \ref{hwc-b:thm}  in the form of an exact (Gaussian)
cubature rule for  $f\in\Pi^{(2m,n)}$ supported on $\dim (\Pi^{(m,n)})$ nodes:
\begin{align}
\frac{1}{(2\pi )^{n} }  \int_{\text{A}^{(n)}_{\texttt{b}} }  f(X_1,\ldots X_n) & \frac{\sqrt{\rho_{\texttt{b}}(X_1,\ldots,X_n)}}{O_{\texttt{b}}(X_1,\ldots,X_n;q,q_0)}  \text{d}X_1\cdots \text{d}X_n \\
&= \sum_{\lambda\in\Lambda^{(m,n)}_{\texttt{b}}}      f \bigl( \boldsymbol{X}^{(m,n)}_{\texttt{b};\lambda}  \bigr) \hat{\Delta}^{(m,n)}_{\texttt{b};\lambda } ,\nonumber
\end{align}
where $\rho_{\texttt{b}}$ and $O_{\texttt{b}}$ refer to the transformed functions expressed in the new coordinates $X_1,\ldots,X_n$, and
\begin{align*}
\text{A}^{(n)}_{\texttt{b}} & :=\left\{ \bigl(X_1(\boldsymbol{\xi}),\ldots ,X_n(\boldsymbol{\xi})\bigr) \mid \boldsymbol{\xi}\in\mathbb{A}^{(n)}_{\texttt{b}}\right\},\\
 \boldsymbol{X}^{(m,n)}_{\texttt{b};\lambda} &:= \bigl(X_1( \boldsymbol{\xi}^{(m,n)}_{\texttt{b};\lambda} ),\ldots ,X_n( \boldsymbol{\xi}^{(m,n)}_{\texttt{b};\lambda} )\bigr) .
 \end{align*}
Here we have used that the Jacobian is now of the form  $\left| \frac{\partial (X_1,\ldots,X_n)}{\partial (\xi_1,\ldots,\xi_n)}\right| =\sqrt{\rho_{\texttt{b}}(\boldsymbol{\xi})}$ (cf. e.g. \cite[Proposition 3.3]{moo-pat:cubature} and  \cite[Proposition 4]{hri-mot-pat:cubature}). The upshot is that in the present situation it is seen from this change of variables that the number of nodes employed by the cubature rule in Theorem \ref{hwc-b:thm}---achieving the exact integration for all
$f\in\mathbb{P}^{(2m,n)}_{\texttt{b}}$---coincides with the (Gaussian) lower bound $\dim (\mathbb{P}^{(m,n)}_{\texttt{b}})$ (cf. e.g. \cite[Chapter 3.8]{dun-xu:orthogonal}).
\end{remark}

\begin{remark}
The locus where the symmetric functions $
R_{\texttt{b}}(\boldsymbol{\xi}):=   \frac{f(\boldsymbol{\xi})}{O_{\texttt{b}}(\boldsymbol{\xi};q,q_0)}$ admit
simple poles stemming from the denominator $O_{\texttt{b}}(\boldsymbol{\xi};q,q_0)$ is given by  the following complex hyperplane arrangement:
\begin{align*}
\xi_j-\xi_k &=  \frac{\pi}{2}\bigl(1-\text{sign}(q) \bigr) \pm  i\log (| q |) \mod 2\pi \\
\xi_j+\xi_k &=  \frac{\pi}{2}\bigl(1-\text{sign}(q) \bigr) \pm  i\log (| q |) \mod 2\pi
\end{align*}
$(1\leq j< k\leq n)$, and
\begin{equation*}
\xi_j =  \frac{\pi}{2}\bigl(1-\text{sign}(q_0) \bigr) \pm  i\log (| q_0 |) \mod 2\pi\
\end{equation*}
$(1\leq j \leq n)$. At the boundary of the parameter domain $-1< q,q_0 < 1$ this pole locus approximates the closure of the integration domain $\mathbb{A}^{(n)}_{\texttt{b}}$ \eqref{A:b}  
via: (i) the boundary hyperplanes $\xi_j-\xi_{j+1}=0$ ($j=1,\ldots n-1$) when $q\to 1$, (ii) the boundary hyperplane $\xi_n=0$ when $q_0\to 1$,
(iii) the hyperplanes passing through the interior: $\xi_j\pm \xi_k =\pi$ ($1\leq j<k\leq n$) when $q\to -1$, and  (iv) the boundary hyperplane $\xi_1=\pi$ when $q_0\to -1$.
\end{remark}

\section{The positions of the (hyperoctahedral) Hall-Littlewood cubature nodes as roots of associated quasi-orthogonal polynomials}\label{sec4}
In this section the cubature nodes are shown to be common roots of associated quasi-orthogonal polynomials in $\mathbb{P}_{\texttt{c}}^{(m+1,n)}\setminus \mathbb{P}_{\texttt{c}}^{(m,n)}$ (where $\texttt{c}=\texttt{a}$ or $\texttt{c}=\texttt{b}$), cf. Proposition \ref{roots:prp} (below).

\subsection{Statement of the result}
For $\mu\in\Lambda^{(m+1,n)}_{\texttt{a}}\setminus \Lambda^{(m,n)}_{\texttt{a}}$ let
\begin{subequations}
\begin{equation}
 Q_{\texttt{a};\mu}  (\boldsymbol{\xi}) :=  P_{\texttt{a};\mu}  (\boldsymbol{\xi};q) - q^{\text{m}_{\mu_1}(\mu)  \text{m}_{\mu_n}(\mu)}  P_{\texttt{a};\mu-\omega_{\texttt{a};\mu}}  (\boldsymbol{\xi};q) 
\end{equation}
with
\begin{equation}
\omega_{\texttt{a};\mu} := \sum_{j=1}^{ \min ( \text{m}_{\mu_1}(\mu), \text{m}_{\mu_n}(\mu) )}  \bigl( e_{\text{m}_{\mu_1}(\mu) + 1-j}  -e_{n-\text{m}_{\mu_n}(\mu)+j} \bigr) ,
\end{equation}
\end{subequations}
and for $\mu\in\Lambda^{(m+1,n)}_{\texttt{b}}\setminus\Lambda^{(m,n)}_{\texttt{b}}$ let
\begin{subequations}
\begin{align}
 Q_{\texttt{b};\mu}  (\boldsymbol{\xi}) :=  &P_{\texttt{b};\mu}  (\boldsymbol{\xi};q,q_0) \\
 & -  q^{\frac{1}{2}\text{m}_{\mu_1}(\mu) ( \text{m}_{\mu_1}(\mu)-1)}  q_1^{\text{m}_{\mu_1}(\mu)} P_{\texttt{b};\mu-\omega_{\texttt{b};\mu}}  (\boldsymbol{\xi};q,q_0) \nonumber
\end{align}
with
\begin{equation}
\omega_{\texttt{b};\mu} :=e_1+e_2+\cdots + e_{\text{m}_{\mu_1}(\mu)} .
\end{equation}
\end{subequations}

These definitions ensure that $\mu-\omega_{\texttt{c};\mu}\in\Lambda_{\texttt{c}}^{(m,n)}$ and that $ Q_{\texttt{c};\mu} (\boldsymbol{\xi})$ enjoys the following quasi-orthogonality property: the polynomial in question is orthogonal---with respect to the inner products \eqref{or-c:a} and \eqref{or-c:b}, respectively---to the finite-dimensional subspace  of $\mathbb{P}^{(m,n)}_{\texttt{c}}$ spanned by the monomials $M_{\texttt{c};\nu}(\boldsymbol{\xi})$
with $\nu\in\Lambda_{\texttt{c}}^{(m,n)}$ smaller than $\mu-\omega_{\texttt{c};\mu}$ in the dominance partial order ($\texttt{c}\in\{\texttt{a},\texttt{b}\}$).

\begin{proposition}[Cubature Nodes as Roots  of $ Q_{\texttt{c};\mu} (\boldsymbol{\xi}) $]\label{roots:prp}
For $\texttt{c}\in\{ \texttt{a},\texttt{b}\}$,
the (hyperoctahedral) Hall-Littlewood cubature nodes $\boldsymbol{\xi}^{(m,n)}_{\texttt{c};\lambda}$, $\lambda\in\Lambda^{(m,n)}_{\texttt{c}}$ are common roots of the $\binom{m+n_{\texttt{c}}}{m+1}$ polynomials $Q_{\texttt{c};\mu}(\boldsymbol{\xi})$, $\mu\in\Lambda^{(m+1,n)}_{\texttt{c}}\setminus\Lambda^{(m,n)}_{\texttt{c}}$, where $n_{\texttt{a}}:=n-1$ and $n_{\texttt{b}}:=n$.
\end{proposition}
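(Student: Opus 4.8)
The plan is to translate ``vanishing at the nodes'' into a statement about the finite-dimensional discrete orthogonality. First I would record that, for $\texttt{c}\in\{\texttt{a},\texttt{b}\}$, the relations \eqref{do:a}, \eqref{d-ortho:b} together with their column-row duals \eqref{or-d:a}, \eqref{or-d:b} assert that the evaluation vectors $\bigl(P_{\texttt{c};\nu}(\boldsymbol{\xi}^{(m,n)}_{\texttt{c};\lambda})\bigr)_{\lambda\in\Lambda^{(m,n)}_{\texttt{c}}}$, indexed by $\nu\in\Lambda^{(m,n)}_{\texttt{c}}$, are mutually orthogonal---with positive squared norms $1/\delta^{(m,n)}_{\texttt{c};\nu}(q)$---relative to the positive weights $\hat{\Delta}^{(m,n)}_{\texttt{c};\lambda}$. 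Hence the evaluation map $f\mapsto\bigl(f(\boldsymbol{\xi}^{(m,n)}_{\texttt{c};\lambda})\bigr)_\lambda$ restricts to a linear isomorphism of $\mathbb{P}^{(m,n)}_{\texttt{c}}$ onto $\mathbb{C}^{|\Lambda^{(m,n)}_{\texttt{c}}|}$, so that $Q_{\texttt{c};\mu}$ vanishes at every node \emph{if and only if} its discrete pairing $\langle Q_{\texttt{c};\mu},P_{\texttt{c};\nu}\rangle_{\mathrm{disc}}:=\sum_{\lambda}Q_{\texttt{c};\mu}(\boldsymbol{\xi}^{(m,n)}_{\texttt{c};\lambda})\,\overline{P_{\texttt{c};\nu}(\boldsymbol{\xi}^{(m,n)}_{\texttt{c};\lambda})}\,\hat{\Delta}^{(m,n)}_{\texttt{c};\lambda}$ vanishes for every $\nu\in\Lambda^{(m,n)}_{\texttt{c}}$. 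This is the target identity, and everything reduces to computing these pairings.

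Writing $Q_{\texttt{c};\mu}=P_{\texttt{c};\mu}-\gamma_{\texttt{c};\mu}\,P_{\texttt{c};\mu-\omega_{\texttt{c};\mu}}$, where $\gamma_{\texttt{c};\mu}$ is the $q$- (resp.\ $q,q_1$-) power prefactor appearing in the definition, I would treat the two resulting pairings separately. Since $\mu-\omega_{\texttt{c};\mu}\in\Lambda^{(m,n)}_{\texttt{c}}$, the dual orthogonality \eqref{or-d:a}/\eqref{or-d:b} evaluates $\langle P_{\texttt{c};\mu-\omega_{\texttt{c};\mu}},P_{\texttt{c};\nu}\rangle_{\mathrm{disc}}=\delta_{\nu,\,\mu-\omega_{\texttt{c};\mu}}/\delta^{(m,n)}_{\texttt{c};\,\mu-\omega_{\texttt{c};\mu}}(q)$ outright, for every $\nu$. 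Thus the whole problem concentrates on the level-$(m+1)$ term $\langle P_{\texttt{c};\mu},P_{\texttt{c};\nu}\rangle_{\mathrm{disc}}$. Here I would feed the product $P_{\texttt{c};\mu}\overline{P_{\texttt{c};\nu}}$ into the cubature rule of Theorem~\ref{hwc-a:thm}/\ref{hwc-b:thm}: as long as $\nu$ has low enough degree that this product lies in the exactness space $\mathbb{P}^{(2m-1,n)}_{\texttt{a}}$ (i.e.\ level $\le m-2$) resp.\ $\mathbb{P}^{(2m,n)}_{\texttt{b}}$ (i.e.\ level $\le m-1$), the discrete pairing collapses to the continuous one \eqref{or-c:a}/\eqref{or-c:b}, which vanishes because $\mu\neq\nu$. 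The stated quasi-orthogonality of $Q_{\texttt{c};\mu}$ is exactly the continuous-side input that records this low-degree vanishing.

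The \textbf{main obstacle} is the top of the range. For $\nu$ at level $m$ (and, in case $\texttt{a}$, also at level $m-1$) the product $P_{\texttt{c};\mu}\overline{P_{\texttt{c};\nu}}$ has degree strictly exceeding the exactness range, so the cubature rule no longer converts the discrete pairing into the continuous one and the quasi-orthogonality no longer reaches these pairings. What remains to be proved is the \emph{aliasing identity}
\[
\langle P_{\texttt{c};\mu},P_{\texttt{c};\nu}\rangle_{\mathrm{disc}}=\gamma_{\texttt{c};\mu}\,\frac{\delta_{\nu,\,\mu-\omega_{\texttt{c};\mu}}}{\delta^{(m,n)}_{\texttt{c};\,\mu-\omega_{\texttt{c};\mu}}(q)}\qquad(\nu\in\Lambda^{(m,n)}_{\texttt{c}}),
\]
asserting that the sampling of the level-$(m+1)$ polynomial $P_{\texttt{c};\mu}$ on the node grid reproduces $\gamma_{\texttt{c};\mu}P_{\texttt{c};\mu-\omega_{\texttt{c};\mu}}$; combined with the dual-orthogonality value above it yields $\langle Q_{\texttt{c};\mu},P_{\texttt{c};\nu}\rangle_{\mathrm{disc}}=0$ for all $\nu$, whence the conclusion by the isomorphism of the first step. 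I expect this top-level identity to be the genuinely hard point and to require the explicit arithmetic of the nodes rather than orthogonality alone---namely the Robin-type boundary conditions of the lattice Laplacian diagonalized in \cite{die:diagonalization,die-ems:orthogonality}, which relate the (extended) eigenfunction values across levels $m-1,m,m+1$ and whose reflection at the affine wall $\mu_1=m$ furnishes precisely the missing top-level pairings. As a consistency check on the count, note that the number $\binom{m+n_{\texttt{c}}}{m+1}$ of polynomials $Q_{\texttt{c};\mu}$ equals $\dim\mathbb{P}^{(m+1,n)}_{\texttt{c}}-\dim\mathbb{P}^{(m,n)}_{\texttt{c}}$, which is exactly the dimension of the space of symmetric polynomials of degree $m+1$ vanishing on the grid (the kernel of evaluation on $\mathbb{P}^{(m+1,n)}_{\texttt{c}}$), matching the asserted family of common roots.
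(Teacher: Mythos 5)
Your reduction is sound as far as it goes: since the evaluation vectors of $P_{\texttt{c};\nu}$, $\nu\in\Lambda^{(m,n)}_{\texttt{c}}$, at the nodes are mutually orthogonal with nonzero norms relative to the positive weights $\hat{\Delta}^{(m,n)}_{\texttt{c};\lambda}$, they span $\mathbb{C}^{|\Lambda^{(m,n)}_{\texttt{c}}|}$, so $Q_{\texttt{c};\mu}$ vanishes at all nodes if and only if all its discrete pairings with these $P_{\texttt{c};\nu}$ vanish; and exactness of the cubature rule does dispose of the pairings with $\nu$ of level at most $m-2$ (case $\texttt{a}$) resp. $m-1$ (case $\texttt{b}$). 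But the argument stops exactly where the content begins. The ``aliasing identity'' you isolate---that sampling the level-$(m+1)$ polynomial $P_{\texttt{c};\mu}$ on the grid reproduces $\gamma_{\texttt{c};\mu}P_{\texttt{c};\mu-\omega_{\texttt{c};\mu}}$---concerns precisely the top levels your exactness argument cannot reach (note that $\mu-\omega_{\texttt{c};\mu}$ itself sits in that unhandled range, so the one \emph{nonvanishing} pairing is among the ones left open), and modulo your first-step isomorphism it is equivalent to the proposition itself. You acknowledge this and gesture at the Robin-type boundary conditions of \cite{die:diagonalization,die-ems:orthogonality}, but you do not supply the argument; as it stands the proposal is an accurate reformulation rather than a proof.

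The paper closes this gap by a different, pointwise mechanism. It extends $P_{\texttt{c};\mu}$ to non-dominant weights and proves two ``straightening rules'': the ordinary one (Lemmas \ref{s-rule-a:lem} and \ref{s-rule-b:lem}), valid identically in $\boldsymbol{\xi}$, gives $P_{\texttt{c};\mu}=q\,P_{\texttt{c};\mu+e_j-e_{j+1}}$ when $\mu_j-\mu_{j+1}=-1$; the affine one (Lemmas \ref{as-rule-a:lem} and \ref{as-rule-b:lem}) holds only at points satisfying the exponentiated critical-point equations \eqref{BAE:a}/\eqref{BAE:b}---this is where the arithmetic of the nodes actually enters---and reflects the weight across the affine wall. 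Iterating these rules yields the node-by-node identity $P_{\texttt{c};\mu}\bigl(\boldsymbol{\xi}^{(m,n)}_{\texttt{c};\lambda}\bigr)=\gamma_{\texttt{c};\mu}P_{\texttt{c};\mu-\omega_{\texttt{c};\mu}}\bigl(\boldsymbol{\xi}^{(m,n)}_{\texttt{c};\lambda}\bigr)$, which is stronger than, and immediately implies, your aliasing identity without any appeal to the discrete pairings. If you wish to complete your route, this affine straightening step is the missing ingredient you would still have to prove.
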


\begin{remark}\label{jump:rem} It follows from Proposition \ref{roots:prp} that at
$q_1=0$, one has that $ Q_{\texttt{b};\mu}  (\boldsymbol{\xi}) =  P_{\texttt{b};\mu}  (\boldsymbol{\xi};q,q_0) $ for $\mu\in\Lambda^{(m+1,n)}_{\texttt{b}}\setminus\Lambda^{(m,n)}_{\texttt{b}}$. 
Hence, the cubature nodes are in this situation common roots of the hyperoctahedral Hall-Littlewood polynomials
$P_{\texttt{b};\mu}  (\boldsymbol{\xi};q,q_0) $, $\mu\in\Lambda^{(m+1,n)}_{\texttt{b}}\setminus\Lambda^{(m,n)}_{\texttt{b}}$. The equality between
the inner products in Eqs. \eqref{or-c:b} and \eqref{or-d:b} then extends to all $ \mu\in\Lambda^{(m+1,n)}_{\texttt{b}}$ and $\nu\in\Lambda^{(m,n)}_{\texttt{b}}$. The upshot is that at $q_1=0$
the  cubature rule in Theorem \ref{hwc-b:thm} is valid for all $f(\boldsymbol{\xi})$ in $\mathbb{P}_{\texttt{b}}^{2m+1,n}$, i.e. the degree of exactness then jumps to the optimal Gaussian value $2m+1$.
\end{remark}

\subsection{Proof of Proposition \ref{roots:prp} for $\texttt{c}=\texttt{a}$}
To derive the proposition it is convenient to temporarily extend the definition of
$P_{\texttt{a};\mu}  (\boldsymbol{\xi};q)$ \eqref{HLp:a}, \eqref{Cp:a} to any
$\mu\in \text{Span}_{\mathbb{Z}}(\omega_1,\ldots, \omega_{n-1})$.  The corresponding Hall-Littlewood polynomials are known to obey the following straightening rule (cf. e.g.
\cite[Chapter III \S 2, Example 2]{mac:symmetric}).

\begin{lemma}[Straightening Rule]\label{s-rule-a:lem} For any $\mu\in \text{Span}_{\mathbb{Z}}(\omega_1,\ldots, \omega_{n-1})$, one has that
\begin{equation*}
\mu_j-\mu_{j+1}=-1  \Longrightarrow  P_{\texttt{a};\mu}  (\boldsymbol{\xi};q) =q  P_{\texttt{a};\mu  +e_{j}-e_{j+1} }  (\boldsymbol{\xi};q) \quad (j\in \{ 1,\ldots , n-1\}) .
\end{equation*}
\end{lemma}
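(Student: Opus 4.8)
The plan is to verify the straightening rule directly from the explicit symmetrization formula \eqref{HLp:a}--\eqref{Cp:a}, exploiting the interplay between the transposition $s_j=(j\ \ j{+}1)$ and the prefactor $C_{\texttt{a}}(\boldsymbol{\xi};q)$. The key observation is that when $\mu_j-\mu_{j+1}=-1$, the two weight vectors $\mu$ and $\mu'=\mu+e_j-e_{j+1}$ differ by a single simple co-root, with $\mu'_j-\mu'_{j+1}=+1$, so they sit symmetrically about the wall fixed by $s_j$. First I would write both $P_{\texttt{a};\mu}$ and $P_{\texttt{a};\mu'}$ as sums over $S_n$ and pair up the terms indexed by $\sigma$ with those indexed by $\sigma s_j$, since these two cosets produce exponentials that get swapped under the relevant transposition of the spectral variables.

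The heart of the matter is a local two-variable identity for the rational coefficient. Writing $z=e^{-i(\xi_a-\xi_b)}$ for the pair of variables that $s_j$ interchanges, one factor of $C_{\texttt{a}}$ is $\frac{1-qz}{1-z}$, and its $s_j$-image replaces $z$ by $z^{-1}$, giving $\frac{1-qz^{-1}}{1-z^{-1}}=\frac{z-q}{z-1}$. The relevant combination that appears when one collects the contributions of $\sigma$ and $\sigma s_j$ — using that the two adjacent exponents in $\mu$ differ by $-1$ — is precisely the kind of expression that telescopes: the ratio $\frac{z-q}{z-1}$ times the exponential $z^{-\mu_{j+1}}\cdots$ versus $\frac{1-qz}{1-z}$ times $z^{-\mu_j}\cdots$, and one checks that the difference $P_{\texttt{a};\mu}-qP_{\texttt{a};\mu'}$ vanishes termwise after this pairing. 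I would carry this out by isolating, for a fixed $\sigma$, the sum of its term and the $\sigma s_j$-term in $P_{\texttt{a};\mu}$, doing the same in $P_{\texttt{a};\mu'}$, and then verifying the algebraic identity
\begin{equation*}
\frac{1-qz}{1-z}\,z^{\mu_{j+1}}+\frac{z-q}{z-1}\,z^{\mu_j}
= q\left(\frac{1-qz}{1-z}\,z^{\mu'_{j+1}}+\frac{z-q}{z-1}\,z^{\mu'_j}\right)
\end{equation*}
on the two-element orbit, where the remaining factors of $C_{\texttt{a}}$ and the exponentials in the other $n-2$ variables are common to both sides and factor out. With $\mu_j-\mu_{j+1}=-1$ this reduces to an elementary rational identity in the single variable $z$.

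The main obstacle I expect is bookkeeping rather than conceptual: one must be careful that the \emph{other} factors of $C_{\texttt{a}}(\xi_{\sigma_1},\ldots,\xi_{\sigma_n};q)$ — those involving the index pair $(j,j{+}1)$ paired with a third variable $k$ — transform correctly and genuinely cancel between the $\sigma$ and $\sigma s_j$ contributions, so that only the single critical factor $\frac{1-qz}{1-z}$ is active. This is where the product structure of \eqref{Cp:a} over all pairs $1\le a<b\le n$ must be handled with care, since applying $s_j$ permutes which pairs appear. A clean way to organize this is to note that the action of $s_j$ on the product $\prod_{a<b}\frac{1-qe^{-i(\xi_a-\xi_b)}}{1-e^{-i(\xi_a-\xi_b)}}$ changes only the single factor indexed by the pair $(j,j{+}1)$ while permuting the rest among themselves, a standard feature of the Hall--Littlewood $c$-function that underlies the Weyl-group symmetry of $P_{\texttt{a};\mu}$. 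Once this is set up, the termwise cancellation is immediate and summing over the $S_{n-1}$ worth of remaining cosets completes the proof; I would simply cite \cite[Chapter III \S 2, Example 2]{mac:symmetric} as the reference for the underlying computation and present the two-variable identity as the essential verification.
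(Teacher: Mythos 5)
Your proposal is correct and is essentially the paper's own argument in a different packaging: the paper multiplies the single identity $C_{\texttt{a}}(\boldsymbol{\xi};q)+C_{\texttt{a}}(r_j\boldsymbol{\xi};q)e^{i(\xi_j-\xi_{j+1})}=qC_{\texttt{a}}(\boldsymbol{\xi};q)e^{i(\xi_j-\xi_{j+1})}+qC_{\texttt{a}}(r_j\boldsymbol{\xi};q)$ by $\exp(i\xi_1\mu_1+\cdots+i\xi_n\mu_n)$ and symmetrizes, which is exactly your pairing of the $\sigma$ and $\sigma s_j$ terms together with your two-variable rational identity (both rest on the same observation that only the $(j,j{+}1)$ factor of $C_{\texttt{a}}$ is affected by the transposition). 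Your displayed identity checks out with $\mu_{j+1}=\mu_j+1$ (both sides reduce to $qz^{\mu_j}(1+z)$ after factoring out the common $e^{i(\mu_j+\mu_{j+1})\xi_{\sigma_j}}$), so the proof is complete.
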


\begin{proof}
For $j\in \{ 1,\ldots ,n-1\}$, let $r_j$ act on the components of $\boldsymbol{\xi}=(\xi_1,\ldots ,\xi_n)$ by transposing $\xi_j$ and $\xi_{j+1}$:
\begin{equation*}
r_j(\xi_1,\ldots ,\xi_n):= (\xi_1,\ldots,\xi_{j-1},\xi_{j+1},\xi_j,\xi_{j+2},\ldots ,\xi_n).
\end{equation*}
Then
\begin{equation*}
\frac{C_{\texttt{a}}(\boldsymbol{\xi};q)}{C_{\texttt{a}}(r_j\boldsymbol{\xi};q)} =\frac{q-e^{i(\xi_j-\xi_{j+1})}}{1-qe^{i(\xi_j-\xi_{j+1})}} ,
\end{equation*}
and thus
\begin{equation*}
C_{\texttt{a}}(\boldsymbol{\xi};q) + C_{\texttt{a}}(r_j\boldsymbol{\xi};q) e^{i(\xi_j-\xi_{j+1})} = qC_{\texttt{a}}(\boldsymbol{\xi};q) e^{i(\xi_j-\xi_{j+1})}+ qC_{\texttt{a}}(r_j\boldsymbol{\xi};q) .
\end{equation*}
Multiplication of both sides of the latter identity by $\exp ( i\xi_1\mu_1+\cdots+i\xi_n\mu_n)$, with  $\mu\in \text{Span}_{\mathbb{Z}}(\omega_1,\ldots, \omega_{n-1})$ such that
$\mu_j-\mu_{j+1}=-1$ (so $r_j\mu=\mu +e_j-e_{j+1}$), gives rise to the asserted straightening rule upon symmetrization with respect to the permutation action (on $\boldsymbol{\xi}$).
\end{proof}

Moreover, at the cubature nodes $\boldsymbol{\xi}=\boldsymbol{\xi}^{(m,n)}_{\texttt{a};\lambda}$, $\lambda\in\Lambda^{(m,n)}_{\texttt{a}}$ a system of algebraic relations
between the variables $\xi_1,\ldots ,\xi_n$ is satisfied:
\begin{equation}\label{BAE:a}
e^{im\xi_j} =(-1)^{n-1}\prod_{\substack{ 1\leq k\leq n \\ k\neq j}}  \left( \frac{1-q e^{i(\xi_j-\xi_k)}}{e^{i(\xi_j-\xi_k)}-q}\right) \qquad (j=1,\ldots ,n).
\end{equation}
Indeed, Eq. \eqref{BAE:a} is immediate from Eq. \eqref{CEQ:a} after multiplying by the imaginary unit and exponentiation of both sides with the aid of the identity
\begin{equation*}\label{identity}
\exp \bigl( - i v_q ( \vartheta ) \bigr)=  \left( \frac{1-q e^{i\vartheta }}{e^{i\vartheta }-q  }\right) \qquad (-1<q<1).
\end{equation*}
In this situation the Hall-Littlewood polynomials turn out to obey   an additional affine analogue of the above straightening rule (cf. Ref. \cite{die:diagonalization}).

\begin{lemma}[Affine Straightening Rule]\label{as-rule-a:lem}  For any $\mu\in \text{Span}_{\mathbb{Z}}(\omega_1,\ldots, \omega_{n-1})$ and variables $\boldsymbol{\xi}$ satisfying Eq. \eqref{BAE:a}, one has that
\begin{equation*}
\mu_1-\mu_{n}=m+1  \Longrightarrow  P_{\texttt{a};\mu}  (\boldsymbol{\xi};q) =q  P_{\texttt{a};\mu  -e_{1}+e_{n}}  (\boldsymbol{\xi};q) .
\end{equation*}
\end{lemma}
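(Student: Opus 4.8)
The plan is to read this as the affine analogue of the finite straightening rule of Lemma~\ref{s-rule-a:lem}: the hypothesis $\mu_1-\mu_n=m+1$ is for the affine simple reflection what $\mu_j-\mu_{j+1}=-1$ is for the finite ones. I would deduce it from Lemma~\ref{s-rule-a:lem} together with one new ingredient extracted from \eqref{BAE:a}, namely a cyclic (rotation) invariance of $P_{\texttt{a};\mu}(\boldsymbol{\xi};q)$ at the nodes. Throughout write $\sigma\boldsymbol{\xi}:=(\xi_{\sigma_1},\ldots,\xi_{\sigma_n})$, $\langle\boldsymbol{\eta},\mu\rangle:=\eta_1\mu_1+\cdots+\eta_n\mu_n$, and let $c\boldsymbol{\xi}:=(\xi_2,\ldots,\xi_n,\xi_1)$ denote the cyclic shift of the components.

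The first and decisive step is to recast \eqref{BAE:a} as a functional equation for the weight $C_{\texttt{a}}$ in \eqref{Cp:a}. Moving $\xi_1$ into the last slot only reorders the pairs that contain $\xi_1$, so after cancelling the common factors one finds
\begin{equation*}
\frac{C_{\texttt{a}}(\boldsymbol{\xi};q)}{C_{\texttt{a}}(c\boldsymbol{\xi};q)}=\prod_{k=2}^{n}\frac{q-e^{i(\xi_1-\xi_k)}}{1-qe^{i(\xi_1-\xi_k)}} .
\end{equation*}
On the other hand the factor $\frac{1-qe^{i\vartheta}}{e^{i\vartheta}-q}$ occurring in \eqref{BAE:a} equals $-\bigl(\frac{q-e^{i\vartheta}}{1-qe^{i\vartheta}}\bigr)^{-1}$, so the $n-1$ minus signs absorb the prefactor $(-1)^{n-1}$ and the $j=1$ instance of \eqref{BAE:a} collapses to
\begin{equation*}
C_{\texttt{a}}(\boldsymbol{\xi};q)\,e^{im\xi_1}=C_{\texttt{a}}(c\boldsymbol{\xi};q).
\end{equation*}
Since $\boldsymbol{\xi}^{(m,n)}_{\texttt{a};\lambda}$ solves all $n$ equations in \eqref{BAE:a}, the same relation holds with $\boldsymbol{\xi}$ replaced by any $\sigma\boldsymbol{\xi}$, i.e. $C_{\texttt{a}}(\sigma\boldsymbol{\xi};q)\,e^{im\xi_{\sigma_1}}=C_{\texttt{a}}(c\sigma\boldsymbol{\xi};q)$.

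I would then feed this into \eqref{HLp:a}. In a single summand, splitting $e^{im\xi_{\sigma_1}}$ off the exponential $\exp(i\langle\sigma\boldsymbol{\xi},\mu\rangle)$ and applying the cyclic relation turns $C_{\texttt{a}}(\sigma\boldsymbol{\xi};q)\exp(i\langle\sigma\boldsymbol{\xi},\mu\rangle)$ into $C_{\texttt{a}}(c\sigma\boldsymbol{\xi};q)\exp(i\langle c\sigma\boldsymbol{\xi},\tau\mu\rangle)$ with $\tau\mu:=(\mu_2,\ldots,\mu_n,\mu_1-m)$; reindexing the sum by $\rho=c\sigma$ yields the rotation invariance $P_{\texttt{a};\mu}(\boldsymbol{\xi};q)=P_{\texttt{a};\tau\mu}(\boldsymbol{\xi};q)$ at the nodes. (On $\mathbb{R}^n_0$ the right-hand side of \eqref{HLp:a} depends on its label only modulo $(1,\ldots,1)$, so the fact that $\tau\mu$ leaves $\mathbb{R}^n_0$ is harmless.) The assertion now follows from the chain
\begin{equation*}
P_{\texttt{a};\mu}=P_{\texttt{a};\tau^{-1}\mu}=q\,P_{\texttt{a};r_1\tau^{-1}\mu}=q\,P_{\texttt{a};\tau r_1\tau^{-1}\mu}=q\,P_{\texttt{a};\mu-e_1+e_n},
\end{equation*}
in which the outer equalities are the rotation invariance, the middle one is Lemma~\ref{s-rule-a:lem} at $j=1$ --- available precisely because $\mu_1-\mu_n=m+1$ forces $(\tau^{-1}\mu)_1-(\tau^{-1}\mu)_2=(\mu_n+m)-\mu_1=-1$ --- and the last step is the direct check $\tau r_1\tau^{-1}\mu=\mu-e_1+e_n$.

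I expect the only real obstacle to be the opening move: distilling \eqref{BAE:a} into the single clean identity $C_{\texttt{a}}(\boldsymbol{\xi};q)e^{im\xi_1}=C_{\texttt{a}}(c\boldsymbol{\xi};q)$. Everything afterwards is formal --- one reindexing and one appeal to the finite straightening rule --- but this reformulation is where the factorised structure of the scattering factors and the exact normalisation of \eqref{BAE:a} (in particular the $(-1)^{n-1}$ and the orientation of the cyclic shift) must be matched without error.
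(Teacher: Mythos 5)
Your proposal is correct, and it takes a genuinely different route from the paper's. The paper proves the lemma in one step that exactly mirrors its proof of Lemma \ref{s-rule-a:lem}: it introduces the transposition $r_{\texttt{a}}$ of $\xi_1$ and $\xi_n$, uses Eq. \eqref{BAE:a} to write $e^{im(\xi_1-\xi_n)}$ as $\frac{C_{\texttt{a}}(r_{\texttt{a}}\boldsymbol{\xi};q)}{C_{\texttt{a}}(\boldsymbol{\xi};q)}\cdot\frac{1-qe^{i(\xi_1-\xi_n)}}{q-e^{i(\xi_1-\xi_n)}}$, rearranges this into a four-term identity for $C_{\texttt{a}}(\boldsymbol{\xi};q)$ and $C_{\texttt{a}}(r_{\texttt{a}}\boldsymbol{\xi};q)$, and then multiplies by $\exp(i\langle\boldsymbol{\xi},\mu\rangle)$ and symmetrizes. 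You instead distill Eq. \eqref{BAE:a} into the cyclic relation $C_{\texttt{a}}(\boldsymbol{\xi};q)e^{im\xi_1}=C_{\texttt{a}}(c\boldsymbol{\xi};q)$ --- your computation of the ratio $C_{\texttt{a}}(\boldsymbol{\xi};q)/C_{\texttt{a}}(c\boldsymbol{\xi};q)$ and the bookkeeping of the $(-1)^{n-1}$ are correct --- promote it to the label-rotation invariance $P_{\texttt{a};\mu}=P_{\texttt{a};\tau\mu}$ at the nodes, and then conjugate the finite straightening rule: $\tau r_1\tau^{-1}\mu=\mu-e_1+e_n$, with $(\tau^{-1}\mu)_1-(\tau^{-1}\mu)_2=-1$ exactly when $\mu_1-\mu_n=m+1$. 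In effect the paper realizes the affine simple reflection directly as a translation composed with the reflection in the highest root, while you realize it as a conjugate of a finite simple reflection by the cyclic symmetry $\tau$ of the extended affine Weyl group. The paper's argument is shorter and self-contained; yours costs one extra reindexing but yields a stronger and more structural intermediate statement (the full cyclic invariance at the Bethe roots, valid for every label, from which all the affine relations follow at once). Two small points to make explicit if you write this up: Lemma \ref{s-rule-a:lem} must be applied to $\tau^{-1}\mu$, whose coordinate sum is $m$ rather than $0$, so you should note that its proof works verbatim for arbitrary $\mu\in\mathbb{Z}^n$; and, as you already observe, the dependence of \eqref{HLp:a} on the label only modulo $\mathbb{Z}(1,\ldots,1)$ on the hyperplane $\mathbb{R}^n_0$ is what makes the non-lattice label $\tau\mu$ harmless.
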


\begin{proof}
Let $r_{\texttt{a}}$ act on the components of $\boldsymbol{\xi}=(\xi_1,\ldots ,\xi_n)$ by transposition of $\xi_1$ and $\xi_{n}$:
\begin{equation*}
r_{\texttt{a}}(\xi_1,\ldots ,\xi_n):= (\xi_n,\xi_2,\ldots ,\xi_{n-1},\xi_1).
\end{equation*}
Then Eq. \eqref{BAE:a} implies that
\begin{equation*}
e^{im(\xi_1-\xi_n)}= \frac{C_{\texttt{a}}(r_{\texttt{a}}\boldsymbol{\xi};q)}{C_{\texttt{a}}(\boldsymbol{\xi};q)} \left( \frac{1-q e^{i(\xi_1-\xi_{n})}}{q-e^{i(\xi_1-\xi_{n})}} \right) ,
\end{equation*}
and thus
\begin{equation*}
C_{\texttt{a}}(\boldsymbol{\xi};q) + C_{\texttt{a}}(r_{\texttt{a}}\boldsymbol{\xi};q) e^{-i(m+1)(\xi_1-\xi_{n})} = qC_{\texttt{a}}(\boldsymbol{\xi};q) e^{-i(\xi_1-\xi_{n})}+ qC_{\texttt{a}}(r_{\texttt{a}}\boldsymbol{\xi};q)e^{-im(\xi_1-\xi_n)} .
\end{equation*}
Multiplication of both sides of the latter identity by $\exp ( i\xi_1\mu_1+\cdots+i\xi_n\mu_n)$, with  $\mu\in \text{Span}_{\mathbb{Z}}(\omega_1,\ldots, \omega_{n-1})$ such that
$\mu_1-\mu_{n}=m+1$ (so  $r_{\texttt{a}}\mu=\mu -(m+1)(e_1-e_{n})$), gives rise to the asserted affine straightening rule upon symmetrization with respect to the permutation action (on $\boldsymbol{\xi}$).
\end{proof}

Proposition \ref{roots:prp} now follows by iterated use of the straightening rules in Lemmas \ref{s-rule-a:lem} and \ref{as-rule-a:lem}. Indeed, if we first apply the affine straightening rule in Lemma  \ref{as-rule-a:lem}
to  $P_{\texttt{a};\mu}  (\boldsymbol{\xi};q)$ with $\mu\in\Lambda^{(m+1,n)}_{\texttt{a}}\setminus\Lambda^{(m,n)}_{\texttt{a}}$, and subsequently rearrange the components of $\mu -e_1+e_n$ in weakly decreasing order through iterated
transpositions employing the straightening rule of Lemma  \ref{s-rule-a:lem}, then it is readily seen that at $\boldsymbol{\xi}=\boldsymbol{\xi}^{(m,n)}_{\texttt{a};\lambda}$, $\lambda\in\Lambda^{(m,n)}_{\texttt{a}}$:
\begin{equation*}
P_{\texttt{a};\mu}  (\boldsymbol{\xi};q) = q^{\text{m}_{\mu_1}(\mu)  + \text{m}_{\mu_n}(\mu)-1}  P_{\texttt{a};\mu  -e_{\text{m}_{\mu_1}(\mu)}+e_{n-\text{m}_{\mu_n}(\mu) +1} } (\boldsymbol{\xi};q) .
\end{equation*}
Iteration of the latter relation entails that at $\boldsymbol{\xi}=\boldsymbol{\xi}^{(m,n)}_{\texttt{a};\lambda}$, $\lambda\in\Lambda^{(m,n)}_{\texttt{a}}$:
\begin{equation*}
P_{\texttt{a};\mu}  (\boldsymbol{\xi};q) = q^{\text{m}_{\mu_1}(\mu)  \text{m}_{\mu_n}(\mu)}  P_{\texttt{a};\mu-\omega_{\texttt{a};\mu}}  (\boldsymbol{\xi};q) ,
\end{equation*}
which completes the proof of the proposition for $\texttt{c}=\texttt{a}$.

\subsection{Proof of Proposition \ref{roots:prp} for $\texttt{c}=\texttt{b}$} The above proof for $\texttt{c}=\texttt{a}$ is readily adapted to the case $\texttt{c}=\texttt{b}$. Specifically, after temporarily extending the definition
of $P_{\texttt{b};\mu}  (\boldsymbol{\xi};q,q_0)$ \eqref{HLp:b}, \eqref{Cp:b} to any
$\mu\in \mathbb{Z}^n$ we first verify the corresponding  straightening rule (cf. 
\cite[Proposition 2.1]{nel-ram:kostka}).

\begin{lemma}[Straightening Rule]\label{s-rule-b:lem}    For any $\mu\in\mathbb{Z}^n$, one has that
\begin{equation*}
\mu_j-\mu_{j+1}=-1  \Longrightarrow  P_{\texttt{b};\mu}  (\boldsymbol{\xi};q,q_0) =q  P_{\texttt{b};\mu  +e_{j}-e_{j+1} }  (\boldsymbol{\xi};q,q_0) \quad (j\in \{ 1,\ldots , n-1\})  .
\end{equation*}
\end{lemma}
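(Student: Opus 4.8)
The plan is to imitate, almost verbatim, the proof of the type-$\texttt{a}$ straightening rule in Lemma \ref{s-rule-a:lem}, the key observation being that for $j\in\{1,\ldots,n-1\}$ the transposition $r_j$ of $\xi_j$ and $\xi_{j+1}$ disturbs only a single factor of $C_{\texttt{b}}(\boldsymbol{\xi};q,q_0)$ \eqref{Cp:b}. Indeed, $r_j$ sits inside the hyperoctahedral group as a pure permutation (it leaves all signs untouched), so it permutes the single-variable factors $\frac{1-q_0 e^{-i\xi_l}}{1-e^{-2i\xi_l}}$ among themselves and likewise permutes the ``sum'' factors $\frac{1-qe^{-i(\xi_l+\xi_k)}}{1-e^{-i(\xi_l+\xi_k)}}$ among themselves; both subproducts are therefore left invariant. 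Among the ``difference'' factors $\frac{1-qe^{-i(\xi_l-\xi_k)}}{1-e^{-i(\xi_l-\xi_k)}}$ those not involving the pair $\{j,j+1\}$ are merely permuted, so the only genuinely affected term is the $(j,j+1)$ factor. First I would record this in order to conclude that the ratio $C_{\texttt{b}}(\boldsymbol{\xi};q,q_0)/C_{\texttt{b}}(r_j\boldsymbol{\xi};q,q_0)$ coincides with the one already computed in the proof of Lemma \ref{s-rule-a:lem}, namely $\frac{q-e^{i(\xi_j-\xi_{j+1})}}{1-qe^{i(\xi_j-\xi_{j+1})}}$.

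Next I would clear denominators to obtain the bilinear relation
\[
C_{\texttt{b}}(\boldsymbol{\xi};q,q_0)+C_{\texttt{b}}(r_j\boldsymbol{\xi};q,q_0)e^{i(\xi_j-\xi_{j+1})}=qC_{\texttt{b}}(\boldsymbol{\xi};q,q_0)e^{i(\xi_j-\xi_{j+1})}+qC_{\texttt{b}}(r_j\boldsymbol{\xi};q,q_0),
\]
exactly as in the type-$\texttt{a}$ case. Multiplying through by $\exp(i\xi_1\mu_1+\cdots+i\xi_n\mu_n)$ for $\mu\in\mathbb{Z}^n$ with $\mu_j-\mu_{j+1}=-1$ (so that $r_j\mu=\mu+e_j-e_{j+1}$) turns the factor $e^{i(\xi_j-\xi_{j+1})}$ into a shift of the exponent from $\mu$ to $\mu+e_j-e_{j+1}$. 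Writing $F_\mu(\boldsymbol{\xi}):=C_{\texttt{b}}(\boldsymbol{\xi};q,q_0)\exp(i\xi_1\mu_1+\cdots+i\xi_n\mu_n)$ and $\nu:=\mu+e_j-e_{j+1}$, and using the identity $\langle r_j\mu,\boldsymbol{\xi}\rangle=\langle\mu,r_j\boldsymbol{\xi}\rangle$, the four resulting terms are seen to be precisely $F_\mu(\boldsymbol{\xi})$, $F_\mu(r_j\boldsymbol{\xi})$, $qF_\nu(\boldsymbol{\xi})$ and $qF_\nu(r_j\boldsymbol{\xi})$, so the relation reads $F_\mu(\boldsymbol{\xi})+F_\mu(r_j\boldsymbol{\xi})=q\bigl(F_\nu(\boldsymbol{\xi})+F_\nu(r_j\boldsymbol{\xi})\bigr)$.

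Finally I would symmetrize this local identity over the signed-permutation action on $\boldsymbol{\xi}$. Since $P_{\texttt{b};\mu}(\boldsymbol{\xi};q,q_0)=\sum_{w}F_\mu(w\boldsymbol{\xi})$, with $w$ running over the hyperoctahedral group $W$, and since $r_j$ itself lies in $W$, replacing $\boldsymbol{\xi}$ by $w\boldsymbol{\xi}$ and summing over all $w\in W$ sends both $F_\mu(\boldsymbol{\xi})$ and $F_\mu(r_j\boldsymbol{\xi})$ to $P_{\texttt{b};\mu}$ (because $r_jw$ again ranges over $W$), and likewise for $F_\nu$. The identity thus collapses to $2P_{\texttt{b};\mu}(\boldsymbol{\xi};q,q_0)=2qP_{\texttt{b};\nu}(\boldsymbol{\xi};q,q_0)$, which is the asserted straightening rule. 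The only point requiring care---the ``main obstacle'', though a mild one---is the very first step: checking that the single-variable and sum factors of $C_{\texttt{b}}$ are genuinely inert under $r_j$, so that the difference-factor computation is truly identical to the one already carried out for $C_{\texttt{a}}$. Once this is in hand, the remainder is a line-for-line transcription of the proof of Lemma \ref{s-rule-a:lem}.
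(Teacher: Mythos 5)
Your proof is correct and follows the paper's own route exactly: the paper's proof of this lemma likewise reduces to the observation that $r_j$ leaves the single-variable and ``sum'' factors of $C_{\texttt{b}}$ invariant, so that the ratio $C_{\texttt{b}}(\boldsymbol{\xi};q,q_0)/C_{\texttt{b}}(r_j\boldsymbol{\xi};q,q_0)$ coincides with the type-$\texttt{a}$ ratio, after which one multiplies by $\exp(i\xi_1\mu_1+\cdots+i\xi_n\mu_n)$ and symmetrizes over the signed-permutation action. Your write-up merely makes explicit the bookkeeping (the four terms $F_\mu$, $F_\nu$ and the cancelling factor of $2$) that the paper leaves to the reader with the phrase ``follows in the same manner as before.''
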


\begin{proof}
With the notation as in the proof of Lemma \ref{s-rule-a:lem}, we again have that
\begin{equation*}
\frac{C_{\texttt{b}}(\boldsymbol{\xi};q,q_0)}{C_{\texttt{b}}(r_j\boldsymbol{\xi};q,q_0)} =\frac{q-e^{i(\xi_j-\xi_{j+1})}}{1-qe^{i(\xi_j-\xi_{j+1})}} \qquad (j\in \{ 1,\ldots, n-1\} ).
\end{equation*}
The straightening rule thus follows in the same manner as before, except that now $\mu\in\mathbb{Z}^n$ (with $\mu_j-\mu_{j+1}=-1$) and we symmetrize instead with respect to the signed permutation
action (on $\boldsymbol{\xi}$).
\end{proof}

The additional algebraic relations between the variables $\xi_1,\ldots ,\xi_n$, which are  satisfied at  the nodes $\boldsymbol{\xi}=\boldsymbol{\xi}^{(m,n)}_{\texttt{b};\lambda}$, $\lambda\in\Lambda^{(m,n)}_{\texttt{b}}$,
are similarly deduced by exponentiating Eq. \eqref{CEQ:b}:
\begin{align}\label{BAE:b}
e^{2i(m+1)\xi_j} =&
 \left( \frac{1-q_0 e^{i\xi_j}}{e^{i\xi_j}-q_0}\right)  \left( \frac{1-q_1 e^{i\xi_j}}{e^{i\xi_j}-q_1}\right) \\
& \times
\prod_{\substack{ 1\leq k\leq n \\ k\neq j}}  \left( \frac{1-q e^{i(\xi_j+\xi_k)}}{e^{i(\xi_j+\xi_k)}-q}\right) \left( \frac{1-q e^{i(\xi_j-\xi_k)}}{e^{i(\xi_j-\xi_k)}-q}\right) \qquad (j=1,\ldots ,n).
\nonumber
\end{align}
We now arrive at the following affine straightening rule.

\begin{lemma}[Affine Straightening Rule]\label{as-rule-b:lem}    For  any $\mu\in\mathbb{Z}^n$ and variables $\boldsymbol{\xi}$ satisfying Eq. \eqref{BAE:b}, one has that
\begin{equation*}
\mu_1=m+1  \Longrightarrow  P_{\texttt{b};\mu}  (\boldsymbol{\xi};q,q_0) =q_1  P_{\texttt{b};\mu  -e_{1}}  (\boldsymbol{\xi};q,q_0) .
\end{equation*}
\end{lemma}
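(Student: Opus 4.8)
The approach is to run the argument of Lemma~\ref{as-rule-a:lem} in the hyperoctahedral setting, with the transposition $r_{\texttt{a}}$ of $\xi_1$ and $\xi_n$ replaced by the sign reflection
\[
r_{\texttt{b}}(\xi_1,\ldots,\xi_n):=(-\xi_1,\xi_2,\ldots,\xi_n),
\]
which is the reflection associated with the affine wall $\mu_1=m$. The first and most laborious step is to evaluate the quotient $C_{\texttt{b}}(r_{\texttt{b}}\boldsymbol{\xi};q,q_0)/C_{\texttt{b}}(\boldsymbol{\xi};q,q_0)$ from the product \eqref{Cp:b}. Only the factors involving $\xi_1$ are affected; after the reflection permutes the $(\xi_1-\xi_k)$- and $(\xi_1+\xi_k)$-factors, regrouping produces for each $\theta=\xi_1\pm\xi_k$ the elementary ratio
\[
\frac{(1-qe^{i\theta})(1-e^{-i\theta})}{(1-e^{i\theta})(1-qe^{-i\theta})}=\frac{1-qe^{i\theta}}{q-e^{i\theta}},
\]
while the boundary factor attached to $q_0$ contributes
\[
\frac{(1-q_0e^{i\xi_1})(1-e^{-2i\xi_1})}{(1-e^{2i\xi_1})(1-q_0e^{-i\xi_1})}=-e^{-i\xi_1}\,\frac{1-q_0e^{i\xi_1}}{e^{i\xi_1}-q_0}.
\]
Collecting these, $C_{\texttt{b}}(r_{\texttt{b}}\boldsymbol{\xi};q,q_0)/C_{\texttt{b}}(\boldsymbol{\xi};q,q_0)$ equals $-e^{-i\xi_1}\frac{1-q_0e^{i\xi_1}}{e^{i\xi_1}-q_0}$ times exactly the product $\prod_{k\neq 1}\bigl(\frac{1-qe^{i(\xi_1-\xi_k)}}{e^{i(\xi_1-\xi_k)}-q}\bigr)\bigl(\frac{1-qe^{i(\xi_1+\xi_k)}}{e^{i(\xi_1+\xi_k)}-q}\bigr)$ occurring on the right-hand side of \eqref{BAE:b} for $j=1$.

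Now I would invoke \eqref{BAE:b} at $j=1$ to replace that product by $e^{2i(m+1)\xi_1}$ divided by the two $q_0$- and $q_1$-boundary quotients. The $q_0$-quotient cancels the boundary factor found above, and the clean relation
\[
\frac{C_{\texttt{b}}(r_{\texttt{b}}\boldsymbol{\xi};q,q_0)}{C_{\texttt{b}}(\boldsymbol{\xi};q,q_0)}=-e^{i(2m+1)\xi_1}\left(\frac{e^{i\xi_1}-q_1}{1-q_1e^{i\xi_1}}\right)
\]
survives, in which only the parameter $q_1$ remains. Clearing denominators gives the hyperoctahedral analogue of the four-term identity from the proof of Lemma~\ref{as-rule-a:lem}:
\[
C_{\texttt{b}}(r_{\texttt{b}}\boldsymbol{\xi};q,q_0)+C_{\texttt{b}}(\boldsymbol{\xi};q,q_0)e^{i(2m+2)\xi_1}=q_1 e^{i\xi_1}C_{\texttt{b}}(r_{\texttt{b}}\boldsymbol{\xi};q,q_0)+q_1 C_{\texttt{b}}(\boldsymbol{\xi};q,q_0)e^{i(2m+1)\xi_1}.
\]

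Finally I would multiply both sides by $\exp(i\boldsymbol{\xi}\cdot r_{\texttt{b}}\mu)=\exp(-i(m+1)\xi_1+i\mu_2\xi_2+\cdots+i\mu_n\xi_n)$, using $\mu_1=m+1$, and symmetrize over the signed permutation action. On the left, the $C_{\texttt{b}}(\boldsymbol{\xi})$-term acquires the weight $(2m+2)e_1+r_{\texttt{b}}\mu=\mu$ and symmetrizes to $P_{\texttt{b};\mu}$, while the $C_{\texttt{b}}(r_{\texttt{b}}\boldsymbol{\xi})$-term, carrying the reflected weight $r_{\texttt{b}}\mu$, also symmetrizes to $P_{\texttt{b};\mu}$; on the right both terms likewise produce $q_1P_{\texttt{b};\mu-e_1}$ (with weights $(2m+1)e_1+r_{\texttt{b}}\mu=\mu-e_1$ and $e_1+r_{\texttt{b}}\mu=r_{\texttt{b}}(\mu-e_1)$, respectively). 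Here the $C_{\texttt{b}}(r_{\texttt{b}}\boldsymbol{\xi})$-terms are handled exactly as in Lemma~\ref{as-rule-a:lem}: using $\langle\boldsymbol{\xi},r_{\texttt{b}}\nu\rangle=\langle r_{\texttt{b}}\boldsymbol{\xi},\nu\rangle$ and reindexing the signed-permutation sum by $w\mapsto r_{\texttt{b}}w$ turns $C_{\texttt{b}}(r_{\texttt{b}}\boldsymbol{\xi})e^{i\boldsymbol{\xi}\cdot r_{\texttt{b}}\nu}$ into $P_{\texttt{b};\nu}$. Thus both sides collapse to $2P_{\texttt{b};\mu}=2q_1P_{\texttt{b};\mu-e_1}$, and after cancelling the factor $2$ the asserted rule $P_{\texttt{b};\mu}=q_1P_{\texttt{b};\mu-e_1}$ follows.

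The step I expect to be the main obstacle is the bookkeeping in the first paragraph: one must verify that, after the sign reflection shuffles the $\pm$-factors, the $q$- and $q_0$-dependent contributions telescope precisely against the right-hand side of \eqref{BAE:b}, leaving only the single $q_1$-factor and the monomial $-e^{i(2m+1)\xi_1}$. A subordinate but necessary point, implicit already in the $\texttt{a}$-case, is that the solution set of \eqref{BAE:b} is invariant under the signed permutation action; this holds because each factor $\frac{1-qe^{i\theta}}{e^{i\theta}-q}$ (and likewise the boundary factors) inverts under $\theta\mapsto-\theta$, and it guarantees that the four-term identity remains valid at every point $w\boldsymbol{\xi}$, which is what legitimizes the term-by-term symmetrization.
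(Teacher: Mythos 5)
Your proposal is correct and follows essentially the same route as the paper: rewrite Eq.~\eqref{BAE:b} at $j=1$ as an expression for the quotient $C_{\texttt{b}}(r_{\texttt{b}}\boldsymbol{\xi};q,q_0)/C_{\texttt{b}}(\boldsymbol{\xi};q,q_0)$, clear denominators to get the four-term identity, multiply by the appropriate exponential and symmetrize over the signed permutations (your identity and multiplier differ from the paper's only by an overall factor $e^{2i(m+1)\xi_1}$). Your explicit verification of the cancellation of the $q$- and $q_0$-factors and your remark on the $W$-invariance of the solution set of \eqref{BAE:b} are details the paper leaves implicit, and both check out.
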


\begin{proof}
Let $r_{\texttt{b}}$ act on the components of $\boldsymbol{\xi}=(\xi_1,\ldots ,\xi_n)$ by flipping the sign of $\xi_1$:
\begin{equation*}
r_{\texttt{b}}(\xi_1,\ldots ,\xi_n):= (-\xi_1,\xi_2,\ldots ,\xi_{n}).
\end{equation*}
Then Eq. \eqref{BAE:b} with $j=1$ can be rewritten as
\begin{equation*}
e^{2i(m+1)\xi_1}= \frac{C_{\texttt{b}}(r_{\texttt{b}}\boldsymbol{\xi};q,q_0)}{C_{\texttt{b}}(\boldsymbol{\xi};q,q_0)} \left( \frac{1-q_1 e^{i\xi_1}}{q_1e^{-i\xi_1}-1} \right) ,
\end{equation*}
which implies that
\begin{eqnarray*}
\lefteqn{C_{\texttt{b}}(\boldsymbol{\xi};q,q_0) + C_{\texttt{b}}(r_{\texttt{b}}\boldsymbol{\xi};q,q_0) e^{-2i(m+1)\xi_1} =}&& \\
&& q_1 C_{\texttt{b}}(\boldsymbol{\xi};q,q_0) e^{-i\xi_1}+ q_1 C_{\texttt{b}}(r_{\texttt{b}}\boldsymbol{\xi};q,q_0)e^{-i(2m+1)\xi_1} .
\end{eqnarray*}
Multiplication of both sides of the latter identity by $\exp ( i\xi_1\mu_1+\cdots+i\xi_n\mu_n)$, with  $\mu\in \mathbb{Z}^n$ such that
$\mu_1=m+1$ (so  $r_{\texttt{b}}\mu=\mu -2(m+1)e_1$), entails the asserted affine straightening rule when symmetrizing with respect to the signed permutation action (on $\boldsymbol{\xi}$).
\end{proof}

Upon   applying  first the affine straightening rule of Lemma \ref{as-rule-b:lem} to $P_{\texttt{b};\mu}  (\boldsymbol{\xi};q,q_0) $ with
$\mu\in\Lambda^{(m+1,n)}_{\texttt{b}}\setminus\Lambda^{(m,n)}_{\texttt{b}}$, and then rearranging the components of $\mu-e_1$ with the aid of Lemma \ref{s-rule-b:lem} in weakly decreasing order, one infers that for $\boldsymbol{\xi}=\boldsymbol{\xi}^{(m,n)}_{\texttt{b};\lambda}$, $\lambda\in\Lambda^{(m,n)}_{\texttt{b}}$:
\begin{equation*}
P_{\texttt{b};\mu}  (\boldsymbol{\xi};q,q_0) =  q^{\text{m}_{\mu_1}(\mu) -1}  q_1 P_{\texttt{b};\mu  -e_{\text{m}_{\mu_1}(\mu)}}  (\boldsymbol{\xi};q,q_0) .
\end{equation*}
By iterating the process in question we get
\begin{equation*}
P_{\texttt{b};\mu}  (\boldsymbol{\xi};q,q_0) 
= q^{\frac{1}{2}\text{m}_{\mu_1}(\mu) ( \text{m}_{\mu_1}(\mu)-1)}  q_1^{\text{m}_{\mu_1}(\mu)} P_{\texttt{b};\mu-\omega_{\texttt{b};\mu}}  (\boldsymbol{\xi};q,q_0) ,
\end{equation*}
which completes the proof of the proposition for $\texttt{c}=\texttt{b}$.

\section{Specialization to planar domains: determinantal formula for the Christoffel weights}\label{sec5}
In order to convert the cubature rules of Corollaries \ref{hwc-a:cor} and \ref{hwc-b:cor} into effective numerical tools, compact expressions are desired for the Christoffel weights 
$\Delta^{(m,n)}_{\texttt{c};\lambda}$. For $n_{\texttt{c}}=1$ explicit formulas achieving this purpose  can be read-off upon specializing
\cite[Theorem 5]{die-ems:quadrature} (cf. 
Remarks \ref{non-minimal:rem} and \ref{n=1-b:rem}). In this section we generalize the corresponding formulas  for the Christoffel weights
to the planar situation: $n_{\texttt{c}}=2$.
The
cubature formulas of interest are designed to integrate (trigonometric) rational functions over the equilateral triangle and the isosceles  right triangle, respectively; as such they fit within a rich tradition of cubature rules on triangular domains in polynomial  spaces (cf. e.g. Refs. \cite{cow:gaussian,lyn-coo:survey,pap:new}) and trigonometric polynomial spaces
(cf. e.g. Refs. \cite{mun:group,li-sun-xu:discrete-08,li-sun-xu:discrete}), respectively.

\subsection{Integration on the equilateral triangle}
For $n=2$ and $n=3$ the fundamental domain $\mathbb{A}^{(n)}_{\texttt{a}}$ \eqref{A:a} consists of a line segment and an equilateral triangle, respectively.
In these situations, the following proposition provides
a determinantal formula for the Christoffel weights in Corollary \ref{hwc-a:cor}.

\begin{proposition}[Determinantal formula for $\Delta^{(m,n)}_{\texttt{a};\lambda }$, $n_{\texttt{a}}\leq 2$]\label{cw-a:prp}
For $n_{\texttt{a}}=n-1\leq 2$ the Christoffel weights in Corollary \ref{hwc-a:cor} are of the form
\begin{equation}\label{det:a}
\Delta^{(m,n)}_{\texttt{a};\lambda }=
\frac{m}{n}  \left( \det \left[  H^{(m,n)}_{\texttt{a}; j,k} (  \boldsymbol{\xi}^{(m,n)}_{\texttt{a};\lambda} ) \right]_{1\leq j,k\leq n} \right)^{-1}  ,
\end{equation}
with $ H^{(m,n)}_{\texttt{a}; j,k}(\boldsymbol{\xi})$ taken from Eq. \eqref{Hesse:a}.
\end{proposition}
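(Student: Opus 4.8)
The plan is to unwind the definitions and reduce \eqref{det:a} to an explicit evaluation of the squared norm defining the Christoffel weight. By Corollary \ref{hwc-a:cor} one has $\Delta^{(m,n)}_{\texttt{a};\lambda}=|C_{\texttt{a}}(\boldsymbol{\xi}^{(m,n)}_{\texttt{a};\lambda};q)|^2\,\hat{\Delta}^{(m,n)}_{\texttt{a};\lambda}$, while \eqref{cw:a} exhibits $\hat{\Delta}^{(m,n)}_{\texttt{a};\lambda}$ as the reciprocal of $\sum_{\mu}|P_{\texttt{a};\mu}(\boldsymbol{\xi}^{(m,n)}_{\texttt{a};\lambda};q)|^2\delta^{(m,n)}_{\texttt{a};\mu}(q)$. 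Hence the assertion is equivalent to the norm identity
\[
\sum_{\mu\in\Lambda^{(m,n)}_{\texttt{a}}} \bigl| P_{\texttt{a};\mu}(\boldsymbol{\xi}^{(m,n)}_{\texttt{a};\lambda};q)\bigr|^2 \delta^{(m,n)}_{\texttt{a};\mu}(q)
= \frac{n}{m}\,\bigl| C_{\texttt{a}}(\boldsymbol{\xi}^{(m,n)}_{\texttt{a};\lambda};q)\bigr|^2\,
\det\!\bigl[ H^{(m,n)}_{\texttt{a};j,k}(\boldsymbol{\xi}^{(m,n)}_{\texttt{a};\lambda})\bigr]_{1\le j,k\le n}.
\]
A first simplification of the Hessian determinant comes from the row sums of \eqref{Hesse:a}: since $\sum_{k=1}^{n} H^{(m,n)}_{\texttt{a};j,k}(\boldsymbol{\xi}) = m$ for every $j$, the all-ones vector is an eigenvector of $H$ with eigenvalue $m$. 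As this vector is normal to the hyperplane $\mathbb{R}^n_0$ and $H$ is symmetric, the two eigenspaces are orthogonal, $H$ preserves $\mathbb{R}^n_0$, and $\det_{n\times n}H = m\,\det\bigl(H|_{\mathbb{R}^n_0}\bigr)$, so that $\tfrac{m}{n}(\det H)^{-1}=\tfrac{1}{n}\det(H|_{\mathbb{R}^n_0})^{-1}$. The target thereby becomes the Gaudin-type statement that the squared norm equals $n\,|C_{\texttt{a}}|^2$ times the determinant of the $(n-1)$-dimensional Hessian of the Morse function $V^{(m,n)}_{\texttt{a};\lambda}$ on $\mathbb{R}^n_0$.

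For $n_{\texttt{a}}=1$, i.e.\ $n=2$, the nodes and weights coincide with those of the rank-one Bernstein--Szeg\H{o} quadrature already matched in Remark \ref{non-minimal:rem}, so that \eqref{det:a} follows from \cite[Theorem 5]{die-ems:quadrature}. Concretely, the $2\times2$ Hessian gives $\det H = m\bigl(m+2u_q(\xi_1-\xi_2)\bigr)$ with $u_q$ as in \eqref{uv}, and $\tfrac{m}{2}(\det H)^{-1}=\tfrac{1}{2}\bigl(m+2u_q(\xi_1-\xi_2)\bigr)^{-1}$ reproduces the explicit Christoffel weight of that reference.

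The substance of the proposition is the planar case $n_{\texttt{a}}=2$, i.e.\ $n=3$. Writing $u_{jk}:=u_q(\xi_j-\xi_k)$, the matrix-tree expansion of the weighted graph Laplacian $H-mI$ on the triangle yields
\[
\det H = m\Bigl[\, m^2 + 2m\,(u_{12}+u_{13}+u_{23}) + 3\,(u_{12}u_{13}+u_{12}u_{23}+u_{13}u_{23})\,\Bigr].
\]
It remains to evaluate the norm sum over the two-dimensional triangular index set $\Lambda^{(m,3)}_{\texttt{a}}$ and to identify it with this polynomial in the $u_{jk}$, times $\tfrac{3}{m}|C_{\texttt{a}}|^2$. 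I would carry this out by an Abel/Christoffel--Darboux type summation in which the interior contributions telescope, the telescoping being driven by the straightening and affine straightening rules of Lemmas \ref{s-rule-a:lem} and \ref{as-rule-a:lem}, which are available at the nodes precisely because $\boldsymbol{\xi}^{(m,3)}_{\texttt{a};\lambda}$ obeys the Bethe-type relations \eqref{BAE:a}. After telescoping, only the terms supported on the edges and corners of the triangle should survive, and re-expressing the relevant boundary values of $P_{\texttt{a};\mu}$ through \eqref{BAE:a} is expected to reassemble them into the elementary symmetric functions of $u_{12},u_{13},u_{23}$ displayed above, with the prefactor $|C_{\texttt{a}}|^2$ produced by $C_{\texttt{a}}$ from \eqref{Cp:a}.

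The main obstacle is exactly this rank-two summation: collapsing the sum over the triangle into the compact determinant requires a careful accounting of the boundary strata and a proof that their combined contribution reproduces precisely the coefficients $1$, $2m$, and $3$ multiplying the elementary symmetric functions. This bookkeeping is feasible for $n_{\texttt{a}}=2$ because the boundary of $\Lambda^{(m,n)}_{\texttt{a}}$ decomposes only into one-dimensional edges and isolated corners; for $n_{\texttt{a}}\ge3$ the proliferation of higher-dimensional boundary strata makes the same scheme intractable, which is the reason the determinantal formula is confined to the planar situation.
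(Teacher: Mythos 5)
Your preliminary reductions are sound: the identity $\Delta^{(m,n)}_{\texttt{a};\lambda}=|C_{\texttt{a}}|^2\hat{\Delta}^{(m,n)}_{\texttt{a};\lambda}$ correctly converts the claim into a norm evaluation, the observation that the row sums of $H^{(m,n)}_{\texttt{a}}$ equal $m$ (so that $\det H=m\,\det(H|_{\mathbb{R}^n_0})$) is right, and your matrix-tree expansion of $\det H$ for $n=3$, namely $m\bigl[m^2+2m\,e_1(u)+3\,e_2(u)\bigr]$ in the elementary symmetric functions of $u_{12},u_{13},u_{23}$, is the correct target expression. The problem is that the entire substance of the proposition --- showing that $\sum_{\mu\in\Lambda^{(m,3)}_{\texttt{a}}}|P_{\texttt{a};\mu}(\boldsymbol{\xi}^{(m,3)}_{\texttt{a};\lambda};q)|^2\,\delta^{(m,3)}_{\texttt{a};\mu}(q)$ equals $\tfrac{3}{m}|C_{\texttt{a}}|^2\det H$ at the nodes --- is not carried out. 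Phrases such as ``I would carry this out by an Abel/Christoffel--Darboux type summation,'' ``is expected to reassemble,'' and ``this bookkeeping is feasible'' mark exactly the step that constitutes the proof; without executing the telescoping and verifying that the surviving boundary contributions produce the coefficients $1$, $2m$, $3$, you have a plan rather than an argument. Note also that the straightening rules of Lemmas \ref{s-rule-a:lem} and \ref{as-rule-a:lem} relate $P_{\texttt{a};\mu}$ for weights $\mu$ \emph{outside} $\Lambda^{(m,n)}_{\texttt{a}}$ to those inside; it is not evident how they drive a telescoping of a sum whose index already ranges over $\Lambda^{(m,3)}_{\texttt{a}}$, so even the proposed mechanism needs justification.

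For comparison, the paper disposes of this proposition in one line by citing the determinantal evaluation formula of \cite[Proposition 3]{die:finite-dimensional}, where precisely this norm computation was performed. The closest model within the present paper for doing it from scratch is the proof of Proposition \ref{cw-b:prp}: there the squared norm is rewritten as a double sum over the (signed) Weyl group weighted by ratios of $C$-functions and a kernel $G$, the geometric series are summed in closed form, and all occurrences of $e^{\pm im\xi_j}$ are eliminated via the Bethe-type relations \eqref{BAE:a}/\eqref{BAE:b} before the result is recognized as the Hessian determinant. If you want a self-contained proof, that is the computation you would need to reproduce for the root system $A_2$; as it stands, your proposal identifies the right-hand side correctly but leaves the left-hand side unevaluated.
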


\begin{proof}
The asserted determinantal formula is immediate from the expressions of the Christoffel weights in Theorem \ref{hwc-a:thm} and Corollary \ref{hwc-a:cor} upon invoking the determinantal evaluation formula in  \cite[Proposition 3]{die:finite-dimensional}.
\end{proof}

\begin{remark}
In the planar situation, the cubature of Remark \ref{non-minimal:rem} in the coordinates $X_1=\cos (\xi_1)+\cos(\xi_2)+\cos(\xi_1+\xi_2)$, $X_2=\sin(\xi_1)+\sin(\xi_2)-\sin(\xi_1+\xi_2)$ (with $(\xi_1,\xi_2)\in\mathbb{R}^2$ such that $\xi_1-\xi_2>0$, $\xi_1+2\xi_2>0$ and $2\xi_1+\xi_2<2\pi$) becomes a rule for the integration over the interior region bounded by Steiner's deltoid of area $2\pi$ (cf. e.g. \cite[Section 3]{koo:orthogonal3})
\begin{align}\label{A2:algebraic}
\frac{1}{2\pi^2}\int_{\text{A}^{(3)}_{\texttt{a}}}    f(X_1,X_2) &  \frac{\sqrt{\rho_{\texttt{a}}(X_1,X_2)}}{O_{\texttt{a}}(X_1,X_2;q)}     \text{d}X_1\text{d}X_2\\
&= \sum_{\substack{ l_1, l_2\geq 0\\l_1+l_2\leq m}}
f\bigl(\boldsymbol{X}^{(m,3)}_{\texttt{a};l_1\omega_1+l_2\omega_2}\bigr) \hat{\Delta}^{(m,3)}_{\texttt{a};l_1\omega_1+l_2\omega_2} , \nonumber
\end{align}
 where 
 \begin{align*}
 \text{A}^{(3)}_{\texttt{a}}= &  \{  (X_1,X_2) \in\mathbb{R}^2 \mid  \rho_{\texttt{a}}(X_1,X_2)> 0 \} ,\\
 \rho_{\texttt{a}}(X_1,X_2)=&   8(X_1^3-3X_1X_2^2)-(X_1^2+X_2^2+9)^2+108 ,\\
 O_{\texttt{a}}(X_1,X_2;q)=&1+q^6-(q+q^5)\bigl(X_1^2+X_2^2-3\bigr) \\
 & +(q^2+q^4)\bigl(6-5(X_1^2+X_2^2) +2(X_1^3-3X_1X_2^2)\bigr) \\
 &-q^3\bigl( (X_1^2+X_2^2+3)^2-4(X_1^3-3X_1X_2^2)-16  \bigr) ,\\
  \hat{\Delta}^{(m,3)}_{\texttt{a};l_1\omega_1+l_2\omega_2} =& \frac{ \rho_{\texttt{a}}  \bigl(\boldsymbol{X}^{(m,3)}_{\texttt{a};l_1\omega_1+l_2\omega_2}\bigr)  }{O_{\texttt{a}} \bigl(\boldsymbol{X}^{(m,2)}_{\texttt{b};l_1\omega_1+l_2\omega_2};q\bigr)}   {\Delta}^{(m,3)}_{\texttt{a};l_1\omega_1+l_2\omega_2} ,
 \end{align*}
 and $f(X_1,X_2)$ is allowed to be any polynomial of total degree at most $2m-1$ in $X_1,X_2$.  For $q=0$ the cubature rule \eqref{A2:algebraic} can be found in
\cite[Section 5.2]{li-sun-xu:discrete-08} (cf. also Section \ref{SchurA:sec} below) and for $q\to 1$ in  \cite[Section 3.4]{mun:group},  \cite[Section 5.3]{li-sun-xu:discrete-08} and \cite[Section 4.1]{hri-mot-pat:cubature} (cf. also Section \ref{monomialA:sec} below).
\end{remark}

\subsection{Integration on the isosceles right triangle}
The fundamental domain $\mathbb{A}^{(n)}_{\texttt{b}}$ \eqref{A:b} boils down  to  a line segment and an isosceles right triangle, respectively, when $n=1$ and $n=2$.
The corresponding 
Christoffel weights in Corollary \ref{hwc-b:cor} are then given by the following determinantal formula.

\begin{proposition}[Determinantal formula for $\Delta^{(m,n)}_{\texttt{b};\lambda }$, $n_{\texttt{b}}\leq 2$]\label{cw-b:prp}
For $n_{\texttt{b}}=n\leq 2$ the Christoffel weights in Corollary \ref{hwc-b:cor} are of the form
\begin{equation}\label{det:b}
\Delta^{(m,n)}_{\texttt{b};\lambda }=
  \left( \det \left[  H^{(m,n)}_{\texttt{b}; j,k} (  \boldsymbol{\xi}^{(m,n)}_{\texttt{b};\lambda} ) \right]_{1\leq j,k\leq n} \right)^{-1} ,
\end{equation}
with $ H^{(m,n)}_{\texttt{b}; j,k}(\boldsymbol{\xi})$ taken from Eq. \eqref{Hesse:b}.
\end{proposition}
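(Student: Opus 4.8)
The plan is to follow the same route as in the proof of Proposition \ref{cw-a:prp}, since the two determinantal formulas are entirely parallel: both assert that the Christoffel weight equals (up to an explicit prefactor) the reciprocal of the determinant of the Hessian of the associated Morse function, evaluated at the cubature node. Just as the case $\texttt{c}=\texttt{a}$ reduces to the determinantal evaluation formula of \cite[Proposition 3]{die:finite-dimensional}, I expect the case $\texttt{c}=\texttt{b}$ to reduce to the analogous hyperoctahedral determinantal evaluation established in \cite{die-ems:orthogonality}.

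\medskip

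First I would recall the defining expression of the Christoffel weight $\hat{\Delta}^{(m,n)}_{\texttt{b};\lambda}$ from Eq. \eqref{cw:b} as the reciprocal of the diagonal entry $\sum_{\mu\in\Lambda^{(m,n)}_{\texttt{b}}} |P_{\texttt{b};\mu}(\boldsymbol{\xi}^{(m,n)}_{\texttt{b};\lambda};q,q_0)|^2 \delta^{(m,n)}_{\texttt{b};\mu}(q)$ arising from the discrete orthogonality relations \eqref{d-ortho:b}, \eqref{w:b}. The task is to show that in the rank-two case ($n_{\texttt{b}}=n\leq 2$) this sum admits a closed-form determinantal evaluation in terms of the Hessian $H^{(m,n)}_{\texttt{b};j,k}$ of the Morse function \eqref{morse-b}. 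The key step is to invoke the determinantal evaluation formula for the squared norm of the (hyperoctahedral) Hall-Littlewood polynomials at the nodes; this is the $BC_n$ counterpart of \cite[Proposition 3]{die:finite-dimensional}, which expresses precisely the relevant weighted sum as the Jacobian determinant of the transformation from the spectral parameters to the node coordinates. Because the node $\boldsymbol{\xi}^{(m,n)}_{\texttt{b};\lambda}$ is the critical point of $V^{(m,n)}_{\texttt{b};\lambda}$, that Jacobian coincides with $\det[H^{(m,n)}_{\texttt{b};j,k}(\boldsymbol{\xi}^{(m,n)}_{\texttt{b};\lambda})]$.

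\medskip

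Finally I would assemble the pieces: combining the definition $\Delta^{(m,n)}_{\texttt{b};\lambda}:=|C_{\texttt{b}}(\boldsymbol{\xi};q,q_0)|^2 \hat{\Delta}^{(m,n)}_{\texttt{b};\lambda}$ from Corollary \ref{hwc-b:cor} with the determinantal evaluation yields the claimed formula \eqref{det:b}, where---in contrast to the type-$\texttt{a}$ case---the prefactor turns out to be $1$ rather than $m/n$. The discrepancy in the prefactor is a consequence of the different normalizations and the different leading coefficient $2(m+1)$ appearing in the Hessian \eqref{Hesse:b}, as opposed to $m$ in \eqref{Hesse:a}. The main obstacle I anticipate is verifying that the hyperoctahedral analogue of the determinantal evaluation formula indeed holds with the correct constant; this amounts to tracking carefully how the measure $\delta^{(m,n)}_{\texttt{b};\mu}(q)$ and the weight $|C_{\texttt{b}}|^2$ interact under the change from spectral to spatial variables, and confirming that the boundary-parameter $q_1$ enters the node positions but drops out of the determinantal identity in the stated rank-two cases. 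Since this evaluation is supplied by the cited references, the remaining work is the routine bookkeeping of constants, and the proposition follows.
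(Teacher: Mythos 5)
Your plan rests on a step that is not actually available: you ``invoke the determinantal evaluation formula for the squared norm of the (hyperoctahedral) Hall-Littlewood polynomials at the nodes'' as the $BC_n$ counterpart of \cite[Proposition 3]{die:finite-dimensional}, and you assume it is supplied by \cite{die-ems:orthogonality}. No such ready-made evaluation is cited or used by the paper, and indeed that identity \emph{is} the entire content of the proposition---it is exactly what has to be proved. The paper's proof does the work directly: starting from the explicit formula \eqref{HLp:b}, \eqref{Cp:b}, it expands $|C_{\texttt{b}}(\boldsymbol{\xi};q,q_0)|^{-2}\sum_{\mu\in\Lambda^{(m,n)}_{\texttt{b}}}|P_{\texttt{b};\mu}(\boldsymbol{\xi};q,q_0)|^2\delta^{(m,n)}_{\texttt{b};\mu}(q)$ as a double sum over signed permutations weighted by ratios of $C_{\texttt{b}}$-factors and the generating function $G_{\texttt{b}}$ of Eq. \eqref{Gb}, sums the resulting geometric series, and then eliminates every occurrence of $e^{\pm im\xi_j}$ using the algebraic relations \eqref{BAE:b} that hold at the nodes, finally recognizing the outcome as $\det[H^{(m,n)}_{\texttt{b};j,k}(\boldsymbol{\xi})]$. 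This computation is only carried out for $n=1$ and $n=2$, which is precisely why the proposition is restricted to $n_{\texttt{b}}\leq 2$; for $n_{\texttt{b}}>2$ the paper explicitly treats the formula as conjectural and tests it only numerically. A proof that merely defers to a general evaluation formula would, if that formula existed, establish the result for all $n$---a statement the authors do not claim. Your heuristic that the Jacobian of the spectral-to-spatial change of variables equals the Hessian of the Morse function is a reasonable guiding intuition, but it does not substitute for the identity between that determinant and the weighted sum of squared polynomials.

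Two further points of bookkeeping are off. First, $q_1$ does not ``drop out of the determinantal identity'': it appears explicitly in the diagonal entries of \eqref{Hesse:b} through $u_{q_1}(\xi_j)$, and in the computation it is injected precisely at the elimination step, since Eq. \eqref{BAE:b} is where $q_1$ enters the algebra at the nodes (the polynomials $P_{\texttt{b};\mu}(\cdot;q,q_0)$, the weights $\delta^{(m,n)}_{\texttt{b};\mu}(q)$ and $|C_{\texttt{b}}|^2$ are all $q_1$-independent). Second, the absence of the prefactor $m/n$ relative to \eqref{det:a} is not something to be ``tracked'' from a cited constant; it falls out of the explicit computation (one can sanity-check it against the $q=0$ factorization in Proposition \ref{sc-b:prp}, where the determinant reduces to $\prod_j(2(m+n)+u_{q_0}(\xi_j)+u_{q_1}(\xi_j))$). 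As written, the proposal identifies the right objects but leaves the essential calculation unperformed.
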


\begin{proof}
The idea of the proof is to provide a corresponding determinantal evaluation formula for the representations of the Christoffel weights  in Theorem \ref{hwc-b:thm} and Corollary \ref{hwc-b:cor}.
To this end one uses that at the cubature nodes $ \boldsymbol{\xi}^{(m,n)}_{\texttt{b};\lambda}$, $\lambda\in\Lambda^{(m,n)}$
the relations in Eq. \eqref{BAE:b} are satisfied.
Specifically, from the explicit formula for the hyperoctahedral Hall-Littlewood polynomial in Eqs.  \eqref{HLp:b}, \eqref{Cp:b} it follows that
\begin{subequations}
 \begin{align}
 &  | C_{\texttt{b}}(\boldsymbol{\xi} ;q,q_0) |^{-2} \sum_{\mu\in \Lambda^{(m,n)}_{\texttt{b}}}  
  \left| P_{\texttt{b};\mu} \bigl(\boldsymbol{\xi} ;q,q_0\bigr) \right|^2  \delta^{(m,n)}_{\texttt{b};\mu } (q)  \\
&  =\sum_{\substack{\sigma, \sigma^\prime\in S_n \\ \epsilon,\epsilon^\prime\in \{ 1,-1\}^n}}   \frac{C_{\texttt{b}}\left( \epsilon_1 \xi_{\sigma_1},\ldots ,\epsilon_n\xi_{\sigma_n} ;q,q_0\right)}{C_{\texttt{b}}\left(\epsilon_1^\prime \xi_{\sigma_1^\prime},\ldots ,\epsilon_n^\prime\xi_{\sigma_n^\prime}  ;q,q_0\right)} 
  G_{\texttt{b}} \left(      \epsilon_1 \xi_{\sigma_1}- \epsilon_1^\prime \xi_{\sigma_1^\prime}   ,\ldots ,\epsilon_n\xi_{\sigma_n}   -\epsilon_n^\prime\xi_{\sigma_n^\prime}          \right)   , \nonumber
  \end{align}
where
\begin{equation}\label{Gb}
 G_{\texttt{b}} (\xi_1,\ldots ,\xi_n)   :=
\sum_{\mu\in \Lambda^{(m,n)}_{\texttt{b}}}  \delta^{(m,n)}_{\texttt{b};\mu } (q)   e^{i\mu_1\xi_1+\cdots +i\mu_n\xi_n}  .
\end{equation}
\end{subequations}
For $n=1$ this simplifies to
\begin{subequations}
\begin{equation}
2(m+1)   +   \frac{C_{\texttt{b}}(\xi_1;q,q_0)}{C_{\texttt{b}}(-\xi_1;q,q_0)}G_{\texttt{b}}(2\xi_1) + \frac{C_{\texttt{b}}(-\xi_1;q,q_0)}{C_{\texttt{b}}(\xi_1;q,q_0)} G(-2\xi_1) 
\end{equation}
with
\begin{equation}
 G_{\texttt{b}} (\xi_1)=\sum_{0\leq\mu_1\leq m} e^{i\mu_1\xi_1} ,
\end{equation}
\end{subequations}
whereas for $n=2$ one arrives at
\begin{subequations}
\begin{equation}
\sum_{\substack{\sigma, \sigma^\prime\in S_2 \\ \epsilon,\epsilon^\prime\in \{ 1,-1\}^2}}   \frac{C_{\texttt{b}}\left( \epsilon_1 \xi_{\sigma_1},\epsilon_2\xi_{\sigma_2} ;q,q_0\right)}{C_{\texttt{b}}\left(\epsilon_1^\prime \xi_{\sigma_1^\prime} ,\epsilon_2^\prime\xi_{\sigma_2^\prime}  ;q,q_0\right)} 
  G_{\texttt{b}} \left(      \epsilon_1 \xi_{\sigma_1}- \epsilon_1^\prime \xi_{\sigma_1^\prime}   ,\epsilon_2\xi_{\sigma_2}   -\epsilon_2^\prime\xi_{\sigma_2^\prime}          \right) 
\end{equation}
with
\begin{equation}
 G_{\texttt{b}} (\xi_1,\xi_2)=  \sum_{m\geq \mu_1> \mu_2\geq 0}    e^{i\mu_1\xi_1+i\mu_2\xi_2}    +\frac{1}{1+q}  \sum_{m\geq \mu_1= \mu_2\geq 0}    e^{i\mu_1\xi_1+i\mu_2\xi_2}  .
\end{equation}
\end{subequations}
In both situations, summation of the pertinent geometric series and subsequent elimination of all instances of 
$e^{\pm im\xi_j}$ ($j=1,\ldots ,n$) by means of the relations in Eq. \eqref{BAE:b}, gives rise to a (cumbersome) expression that can be rewritten as
 $ \det \left[  H^{(m,n)}_{\texttt{b}; j,k} (  \boldsymbol{\xi} ) \right]_{1\leq j,k\leq n} $.
 \end{proof}

 \begin{remark}
In the planar situation, the cubature of Remark \ref{n=1-b:rem} in the coordinates $X_1=2\cos (\xi_1)+2\cos(\xi_2)$, $X_2=2\cos(\xi_1+\xi_2)+2\cos(\xi_1-\xi_2)$ (with $(\xi_1,\xi_2)\in\mathbb{R}^2$ such that $\pi>\xi_1>\xi_2>0$) becomes
a rule for the integration over the region bounded by the parabola $X_1^2-4X_2= 0$ and the lines $ -2X_1+X_2+4=0$ and $ 2X_1+X_2+4=0$ (cf. e.g. \cite[Section 3]{koo:orthogonal1}):
\begin{align}\label{BC2:algebraic}
\frac{1}{4\pi^2}\int_{\text{A}^{(2)}_{\texttt{b}}}    f(X_1,X_2) &  \frac{\sqrt{\rho_{\texttt{b}}(X_1,X_2)}}{O_{\texttt{b}}(X_1,X_2;q,q_0)}     \text{d}X_1\text{d}X_2\\
&= \sum_{m\geq \lambda_1\geq \lambda_2\geq 0}
f\bigl(\boldsymbol{X}^{(m,2)}_{\texttt{b};(\lambda_1,\lambda_2)}\bigr) \hat{\Delta}^{(m,2)}_{\texttt{b};(\lambda_1,\lambda_2)} , \nonumber
\end{align}
 where 
 \begin{align*}
 \text{A}^{(2)}_{\texttt{b}}= &  \{  (X_1,X_2) \in\mathbb{R}^2 \mid  X_1^2-4X_2> 0,\,  -2|X_1|+X_2+4 >0  \} ,\\
 \rho_{\texttt{b}}(X_1,X_2)=&    (X_1^2-4X_2)( 2X_1+X_2+4)( -2X_1+X_2+4) ,\\
 O_{\texttt{b}}(X_1,X_2;q,q_0)=&\bigl(1+q^4-(q+q^3)X_2+q^2(X_1^2-2X_2-2)\bigr) \\
 &\times \bigl(1+q_0^4-(q_0+q_0^3)X_1+q_0^2(X_2+2)\bigr),\\ 
  \hat{\Delta}^{(m,2)}_{\texttt{b};(\lambda_1,\lambda_2)} =& \frac{ \rho_{\texttt{b}}  \bigl(\boldsymbol{X}^{(m,2)}_{\texttt{b};(\lambda_1,\lambda_2)}\bigr)  }{O_{\texttt{b}} \bigl(\boldsymbol{X}^{(m,2)}_{\texttt{b};(\lambda_1,\lambda_2)};q,q_0\bigr)}   {\Delta}^{(m,2)}_{\texttt{b};(\lambda_1,\lambda_2)} ,
 \end{align*}
 and $f(X_1,X_2)$ is allowed to be any polynomial of total degree at most $2m$ in $X_1,X_2$.
 For $q,q_0\in \{ 0,1\}$ the cubature rule \eqref{BC2:algebraic} falls within class of planar cubatures studied in greater generality  in \cite{moo-pat:cubature,xu:minimal,moo-mot-pat:gaussian} and \cite[Section 4.2]{hri-mot-pat:cubature} (cf. also Sections \ref{SchurB:sec} and  \ref{SymmetrizedB:sec}  below).
  \end{remark}

\subsection{Numerical test of the determinantal formula for  $\Delta^{(m,n)}_{\texttt{c};\lambda }$ with $n_{\texttt{c}}>2$}
It is expected that the determinantal formulas for the Christoffel weights in Propositions \ref{cw-a:prp} and \ref{cw-b:prp} in fact persist for $n_{\texttt{c}}>2$, but a direct confirmation along the lines of the above proofs for $n_{\texttt{c}}=2$ would quickly turn into a very tedious computational tour de force. On the other hand, for $f(\boldsymbol{\xi})\equiv 1$ we can evaluate the multivariate integral under consideration explicitly in closed form by means of
the orthogonality relations in Eqs. \eqref{or-d:a} and \eqref{or-d:b} (with $\mu=\nu=0$), in combination with Macdonald's constant term identity (cf. \cite[\S 10]{mac:orthogonal})
\begin{equation}
P_{\texttt{a};0}  (\boldsymbol{\xi};q) =P_{\texttt{b}; 0}  (\boldsymbol{\xi};q,q_0) = 
 \prod_{1\leq j<k\leq n}  \frac{1-q^{1+k-j}}{1-q^{k-j}} =   \prod_{1\leq j\leq n}  \frac{1-q^j}{1-q} .
\end{equation}
By comparing with the (exact)  value of the integral produced by the cubature rule, this entails
the following identity for the Christoffel weights:
\begin{equation}\label{CW-rel}
\sum_{\lambda\in\Lambda^{(m,n)}_{\texttt{c}}}  | C_{\texttt{c}}(\boldsymbol{\xi}^{(m,n)}_{\texttt{c};\lambda}) |^{-2}    \Delta^{(m,n)}_{\texttt{c};\lambda }
=  \prod_{1\leq j\leq n}  \frac{1-q}{1-q^j} 
\end{equation}
(where $C_{\texttt{c}}(\boldsymbol{\xi}):=C_{\texttt{a}}(\boldsymbol{\xi};q) $ \eqref{Cp:a} if $\texttt{c}=\texttt{a}$ and $C_{\texttt{c}}(\boldsymbol{\xi}):=C_{\texttt{b}}(\boldsymbol{\xi};q,q_0) $ \eqref{Cp:b}  if $\texttt{c}=\texttt{b}$).
For $m=1$, the identity under consideration specializes to
\begin{equation}\label{CW-rel-m=1}
\sum_{0\leq j\leq n_{\texttt{c}}}     | C_{\texttt{c}}(\boldsymbol{\xi}^{(1,n)}_{\texttt{c};\omega_{\texttt{c}; j}}) |^{-2}    \Delta^{(1,n)}_{\texttt{c};\omega_{\texttt{c};j} }=
  \prod_{1\leq j\leq n}  \frac{1-q}{1-q^j} ,
\end{equation}
where  $\omega_{\texttt{c};j}:=\omega_j$ \eqref{fwb:a} if $\texttt{c}=\texttt{a}$ and $\omega_{\texttt{c};j}:=e_1+\cdots +e_j$ if $\texttt{c}=\texttt{b}$, with the convention that
$\omega_{\texttt{c};0}:=0$.

Tables \ref{table:a} and \ref{table:b} provide  numerical examples  for $m=1$ and $n_{\texttt{c}}=3$ that exhibit the cubature nodes, the values of the Christoffel weights predicted by the determinantal formula, and
the values of the (hyperoctahedral) Hall-Littlewood orthogonality measure at the nodes.
The data in these tables were computed with Maple using a precision of 8  digits. The quadratic convergence of Newton's method for the computation of the nodes in question via Eqs. \eqref{CEQ:a} and \eqref{CEQ:b} is illustrated by Tables \ref{table:aNewton}  and \ref{table:bNewton}, respectively; in order to be able to show the convergence up to the fourth iteration we relied on a high precision computation in Maple of 50 digits.

The data of Tables \ref{table:a} and \ref{table:b}
are compatible with the equality in Eq. \eqref{CW-rel-m=1} (within the numerical precision of the tables). Indeed, when $\texttt{c}=\texttt{a}$  the LHS yields $0.53850$, which coincides
with the value
$\frac{15625}{29016}$ on the RHS in five decimals. Similarly, when $\texttt{c}=\texttt{b}$  the LHS yields $0.67205$,
which differs from the value
$\frac{125}{186}$ on the RHS by a unit in the fifth
decimal (caused by the rounding error stemming from the data of the table).

As a second check we have tested the cubature rules in question with the determinantal expressions for the Christoffel weights
beyond the domain of exact integration.
To this end the function
\begin{subequations}
 \begin{equation}\label{testfunction:a}
{\textstyle \exp(\frac{1}{2}\cos \xi_1+\cdots+\frac{1}{2}\cos\xi_n)/O_{\texttt{a}}(\boldsymbol \xi;q) }
\end{equation}
($=\exp\bigl( \frac{1}{2}\text{Re}(M_{a;\omega_1(\boldsymbol \xi)})\bigr)/O_{\texttt{a}}(\boldsymbol \xi;q)$) with $q=\frac{1}{5}$ was integrated in Maple with a precision
of 8 digits against the weight function $\rho_{\texttt{a}}(\boldsymbol{\xi})$, both for $n=3$ (when the determinantal expression is justified
by Proposition \ref{det:a}) and for $n=4$ (when the determinantal expression is conjectural). Table \ref{table:acubature} shows for $m=1$ that in both cases the corresponding
Hall-Littlewood cubature from Corollary \ref{hwc-a:cor}  performs somewhat better than the corresponding Schur cubature from Proposition \ref{sc-a:prp} (below).
Similarly,  the function
\begin{equation}\label{testfunction:b}
\exp(\cos \xi_1+\cdots+\cos\xi_n)/O_{\texttt{b}}(\boldsymbol \xi;q,q_0)
\end{equation}
\end{subequations}
($=\exp\bigl(\frac{1}{2}  M_{\texttt{b};\omega_1(\boldsymbol \xi)}\bigr)/O_{\texttt{b}}(\boldsymbol \xi;q,q_0)$) with $q=\frac{1}{5}$, $q_0=\frac{1}{3}$ and $q_1=\frac{1}{7}$ was integrated in Maple with a precision of 8 digits  against the weight function $\rho_{\texttt{b}}(\boldsymbol{\xi})$; Table \ref{table:bcubature}  reveals for $m=1$ that  the corresponding hyperoctahedral Hall-Littlewood cubature  from Corollary \ref{hwc-b:cor} significantly outperforms
the symplectic Schur cubature from Eq. \eqref{ss-rule} (below), both when $n=2$ (with the determinantal expression being justified
by Proposition \ref{det:b}) and when $n=3$ (with the determinantal expression being conjectural).

 Table \ref{table:planar-different-m} illustrates that by augmenting the number of nodes there is a clear tendency in both examples for the planar (hyperoctahedral) Hall-Littlewood cubature to perform significantly better than the (symplectic) Schur cubature. To achieve the required accuracy the latter table was computed  in Maple with a precision of 12 digits.

\begin{table}[hbt]
\centering
\caption{Hall-Littlewood cubature on the  tetrahedron $\mathbb{A}^{(4)}_{\texttt{a}}$ \eqref{A:a}: cubature nodes, Christoffel weights (via the determinantal formula \eqref{det:a}), and orthogonality measure for $n_{\texttt{a}}=3$ and $m=1$, with
$q=\frac{1}{5}$.}
\label{table:a}
\begin{tabular}{@{}|c|ccc|c|@{}} 
\toprule
 & $\boldsymbol{\xi}^{(1,4)}_{\texttt{a};\omega_{\texttt{a}; j}}$ & $  \Delta^{(1,4)}_{\texttt{a};\omega_{\texttt{a};j} }$ &  $  | C_{\texttt{a}}(\boldsymbol{\xi}^{(1,4)}_{\texttt{a};\omega_{\texttt{a}; j}}) |^{-2}   $  \\
\midrule
$j=0$ & $(1.7848, 0.58020,-0.58020,-1.7848)$ &$2.6453\cdot 10^{-3}$ & $50.892$ \\ 
$j=1$ &  $(2.9276, 0.21398,-0.99059 ,-2.1510)$ &$2.6453\cdot 10^{-3}$ &  $50.892$ \\ 
$j=2$ & $(2.5614, 1.3568,  -1.3568, -2.5614)$ & $2.6453\cdot 10^{-3}$ &  $50.892$  \\ 
$j=3$& $(2.1510, 0.99059, -0.21398 , -2.9276)$ &$2.6453\cdot 10^{-3}$ &  $50.892$  \\ 
\bottomrule
\end{tabular} 
\end{table}

\begin{table}[hbt]
\centering
\caption{Hyperoctahedral Hall-Littlewood cubature on the  tetrahedron $\mathbb{A}^{(3)}_{\texttt{b}}$ \eqref{A:b}:
cubature nodes, Christoffel weights (via the determinantal formula \eqref{det:b}), and orthogonality measure for $n_{\texttt{b}}=3$ and $m=1$, with $q=\frac{1}{5}$, $q_0=\frac{1}{3}$, $q_1=\frac{1}{7}$.}
\label{table:b}
\begin{tabular}{@{}|c|ccc|c|@{}} 
\toprule
 & $\boldsymbol{\xi}^{(1,3)}_{\texttt{b};\omega_{\texttt{b}; j}}$ & $  \Delta^{(1,3)}_{\texttt{b};\omega_{\texttt{b};j} }$ &  $  | C_{\texttt{b}}(\boldsymbol{\xi}^{(1,3)}_{\texttt{b};\omega_{\texttt{b}; j}}) |^{-2}   $  \\
\midrule
$j=0$ & $(1.6920, 1.1134,0.56095)$ &$9.1533\cdot 10^{-4}$ & $98.915$ \\ 
$j=1$ & (2.3903, 1.1508, 0.57998) &$1.0877\cdot 10^{-3}$  & $232.57$ \\ 
$j=2$ &(2.4257, 1.7964, 0.60785) &$1.1607\cdot 10^{-3}$  & $212.18$\\ 
$j=3$& (2.4470, 1.8327, 1.2423) & $1.1394\cdot 10^{-3}$ &$72.198$  \\ 
\bottomrule
\end{tabular} 
\end{table}

{\small
\begin{table}[hbt] 
\centering
\caption{Euclidean distance between the node $\boldsymbol{\xi}^{(1,4)}_{\texttt{a};\omega_{\texttt{a}; j}}$ and the Newton iterates of Eq. \eqref{CEQ:a} starting from the initial estimate $\frac{2\pi}{5}(\omega_{\texttt{a};j}+\varrho_{\texttt a })$, with $q=\frac{1}{5}$.}
\label{table:aNewton}
\begin{tabular}{@{}|c|ccccc|@{}} 
\toprule
 &  0  & 1 &  2 & 3 &4  \\
\midrule
$j=0$ &  $1.57  \cdot 10^{-1}$ & $8.49\cdot 10^{-4}$ & $9.32 \cdot 10^{-8}$ & $1.08 \cdot 10^{-15}$ & $1.53 \cdot 10^{-31}$  \\ 
$j=1$ &  && idem &&\\
$j=2$ &  && idem &&\\
$j=3$ &  && idem &&\\
\bottomrule
\end{tabular} 
\end{table}
}

{\small
\begin{table}[hbt] 
\centering
\caption{Euclidean distance between the node $\boldsymbol{\xi}^{(1,3)}_{\texttt{b};\omega_{\texttt{b}; j}}$ and the Newton iterates of Eq. \eqref{CEQ:b} starting from the initial estimate $\frac{\pi}{5}(\omega_{\texttt{b};j}+\varrho_{\texttt b })$, with $q=\frac{1}{5}$, $q_0=\frac{1}{3}$, $q_1=\frac{1}{7}$.
}

\label{table:bNewton}
\begin{tabular}{@{}|c|ccccc|@{}} 
\toprule
 &  0  & 1 &  2 & 3 &4  \\
\midrule
 $j=0$ &   $2.50 \cdot 10^{-1}$ &   $3.35 \cdot 10^{-3} $ &   $7.49 \cdot 10^{-7} $ & $ 4.02 \cdot 10^{-14}$ & $1.29 \cdot 10^{-28}$   \\ 

 $j=1$  
     & $1.69  \cdot 10^{-1}$ 
     &  $8.19 \cdot 10^{-4}$ 
     & $4.82 \cdot 10^{-8}$ 
     &   $1.66 \cdot 10^{-16}$ 
     &  $2.11 \cdot 10^{-33}$   \\

$j=2$ 
& $1.26 \cdot 10^{-1}$ 
& $2.70 \cdot 10^{-4}$ 
&  $3.12  \cdot 10^{-9}$ 
& $6.72 \cdot 10^{-19}$ 
& $3.15 \cdot 10^{-38}$ 
\\

$j=3$ 

& $8.56 \cdot 10^{-2}$ 
& $2.03 \cdot 10^{-4}$ 
& $9.34 \cdot 10^{-10}$ 
& $4.86 \cdot 10^{-20}$  
& $1.60 \cdot 10^{-40}$ 
\\

\bottomrule
\end{tabular} 
\end{table}
}

{\small
\begin{table}[hbt] 
\centering
\caption{Comparison for $m=1$ of the Hall-Littlewood cubature HLC (Corollary \ref{hwc-a:cor})  using the determinantal formula \eqref{det:a} and  the Schur cubature SC (Proposition \ref{sc-a:prp}), when integrating
the testfunction \eqref{testfunction:a} with $q=\frac{1}{5}$ against $\rho_{\texttt{a}}(\boldsymbol{\xi})$.}
\label{table:acubature}
\begin{tabular}{@{}|c|cc|cc|@{}} 
\toprule
 &\multicolumn{2}{c|}{$n_{\texttt{a}}=2$}  & \multicolumn{2}{c|}{$n_{\texttt{b}}=3$}  \\
 &  Value  & Relative &  Value & Relative   \\
  &  Integral & Error &  Integral & Error   \\
\midrule
Maple & 0.7317  &  & 0.5825 &    \\ 
HLC & 0.7450 &  $1.8\cdot 10^{-2}$ & 0.5926 & $1.7\cdot 10^{-2}$ \\
SC & 0.6862 & $6.2\cdot 10^{-2}$  & 0.5452 & $6.4 \cdot 10^{-2}$ \\
\bottomrule
\end{tabular} 
\end{table}
}

{\small
\begin{table}[hbt] 
\centering
\caption{Comparison for $m=1$ of the hyperoctahedral Hall-Littlewood cubature HHLC (Corollary \ref{hwc-b:cor})  using the determinantal formula \eqref{det:b} and  the  symplectic Schur cubature SSC (Eq. \eqref{ss-rule}), when integrating
the testfunction \eqref{testfunction:b} with $q=\frac{1}{5}$, $q_0=\frac{1}{3}$ and $q_1=\frac{1}{7}$ against $\rho_{\texttt{b}}(\boldsymbol{\xi})$.}
\label{table:bcubature}
\begin{tabular}{@{}|c|cc|cc|@{}} 
\toprule
 &\multicolumn{2}{c|}{$n_{\texttt{b}}=2$}  & \multicolumn{2}{c|}{$n_{\texttt{b}}=3$}  \\
 &  Value  & Relative &  Value & Relative   \\
  &  Integral & Error &  Integral & Error   \\
\midrule
Maple & 1.17979  &  & 0.964386 &    \\ 
HHLC & 1.18029 & $4.2 \cdot 10^{-4}$ & 0.964801 & $4.3 \cdot 10^{-4}$ \\
SSC & 1.11198 &$5.7 \cdot 10^{-2}$ &  0.905819 & $6.1 \cdot 10^{-2}$\\
\bottomrule
\end{tabular} 
\end{table}
}

{\small
\begin{table}[hbt] 
\centering
\caption{ Comparison of the relative cubature errors when integrating the testfunction \eqref{testfunction:a}, \eqref{testfunction:b} against $\rho_{\texttt{c}}(\boldsymbol{\xi})$ for $n_{\texttt{c}}=2$
($q=\frac{1}{5}$, $q_0=\frac{1}{3}$ and $q_1=\frac{1}{7}$).}
\label{table:planar-different-m}
\begin{tabular}{@{}|c|cccc|@{}} 
\toprule
 &  $m=1$  & $m=2$ &  $m=3$ & $m=4$    \\
\midrule
HLC  & $ 1.8 \cdot 10^{-2}$ & $3.2\cdot 10^{-4}$ & $2.4 \cdot 10^{-6}$ & $9.8 \cdot 10^{-9}$  \\ 
SC & $6.2 \cdot 10^{-2}$ & $1.3\cdot 10^{-2}$ & $2.5\cdot 10^{-3}$ & $5.4 \cdot 10^{-4}$  \\  
\hline
HHLC  & $ 4.2 \cdot 10^{-4}$ & $1.8\cdot 10^{-5}$ & $1.4 \cdot 10^{-7}$ & $5.7 \cdot 10^{-10}$  \\ 
SSC & $ 5.7 \cdot 10^{-2}$ & $6.7\cdot 10^{-3}$ & $7.5\cdot 10^{-4}$ & $8.3 \cdot 10^{-5}$  \\  
\bottomrule
\end{tabular} 
\end{table}
}

\section{Degenerations: $q=0$ and $q= 1$}\label{sec6}
The (hyperoctahedral) Hall-Littlewood cubatures of Theorems \ref{hwc-a:thm} and \ref{hwc-b:thm} turn out to unify several previous rules from the literature. In this section we identify a few examples
stemming from the specializations $q= 0$ and $q= 1$.

\subsection{Schur cubature for $\texttt{c}=\texttt{a}$}\label{SchurA:sec}
At $q=0$ the Hall-Littlewood polynomial $P_{\texttt{a};\mu} (\boldsymbol{\xi} ;q)$ \eqref{HLp:a}, \eqref{Cp:a} simplifies to a Schur polynomial. Theorem \ref{hwc-a:thm} (in its its algebraic reformulation of Remark \ref{non-minimal:rem}) then reduces to a more elementary cubature rule from Refs. \cite{li-xu:discrete,moo-pat:cubature}. In the present formulation this rule is well-suited to integrate homogeneous symmetric polynomials against the density of the circular unitary ensemble.

\begin{proposition}[Schur Cubature: $\texttt{c}=\texttt{a}$]\label{sc-a:prp}
For $q=0$ (and $m\in\mathbb{Z}_{>0}$), the cubature rule in Theorem \ref{hwc-a:thm} specializes to
\begin{align}\label{sc:a}
\frac{1}{(2\pi )^{n-1} n^{1/2}} &  \int_{\mathbb{A}^{(n)}_{\texttt{a}}}   f(\boldsymbol{\xi}) \rho_{\texttt{a}} (\boldsymbol{\xi})  \text{d} \boldsymbol{\xi} = \\
&
\frac{1}{n (n+m)^{n-1}}\sum_{\lambda\in\Lambda^{(m,n)}_{\texttt{a}}}      f \Bigl(\frac{2\pi (\varrho_{\texttt{a}}+\lambda)}{m+n}
 \Bigr)   \rho_{\texttt{a}} \Bigl(\frac{2\pi (\varrho_{\texttt{a}}+\lambda)}{m+n}\Bigr)  , \nonumber
\end{align}
where $\varrho_{\texttt{a}}=( \varrho_{\texttt{a};1},\ldots ,\varrho_{\texttt{a};n})$ and $f(\boldsymbol{\xi})$ denotes an arbitrary symmetric polynomial in $\mathbb{P}^{(2m+1,n)}_{\texttt{a}}$. 
\end{proposition}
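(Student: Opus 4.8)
The plan is to specialise every ingredient of Theorem \ref{hwc-a:thm} and Corollary \ref{hwc-a:cor} at $q=0$ and to show that the Christoffel weights collapse to the stated uniform constant. First I would record the elementary substitutions: from Eq.~\eqref{Cp:a} one has $|C_{\texttt a}(\boldsymbol\xi;0)|^{-2}=\prod_{j<k}|1-e^{-i(\xi_j-\xi_k)}|^2=\rho_{\texttt a}(\boldsymbol\xi)$ and $O_{\texttt a}(\boldsymbol\xi;0)=1$, while Eq.~\eqref{w:a} gives $\delta^{(m,n)}_{\texttt a;\mu}(0)=1$ and Remark \ref{bounds-a:rem} locates the nodes at $\boldsymbol\xi^{(m,n)}_{\texttt a;\lambda}|_{q=0}=\tfrac{2\pi(\lambda+\varrho_{\texttt a})}{n+m}$. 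This already reduces Corollary \ref{hwc-a:cor} to the shape of Eq.~\eqref{sc:a} with $\Delta^{(m,n)}_{\texttt a;\lambda}=|C_{\texttt a}(\boldsymbol\xi^{(m,n)}_{\texttt a;\lambda};0)|^2\hat\Delta^{(m,n)}_{\texttt a;\lambda}$, so the whole content is to evaluate this single weight.

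Next I would identify the $q=0$ Hall--Littlewood polynomial with a Schur function. Writing $z_j=e^{i\xi_j}$ and $a_\delta(z)=\det(z_j^{n-k})_{j,k}=\prod_{j<k}(z_j-z_k)$ for the Vandermonde alternant, a direct computation from Eq.~\eqref{Cp:a} yields $C_{\texttt a}(\boldsymbol\xi;0)=z^\delta/a_\delta(z)$ with $\delta=(n-1,\dots,0)$; antisymmetrising Eq.~\eqref{HLp:a} over $S_n$ then gives $P_{\texttt a;\mu}(\boldsymbol\xi;0)=a_{\mu+\delta}(z)/a_\delta(z)=s_\mu(z)$. Since $|z_j|=1$ forces $|C_{\texttt a}(\boldsymbol\xi;0)|^2=|a_\delta(z)|^{-2}$, the defining formula \eqref{cw:a} collapses (using $\delta_\mu(0)=1$) to
\[
\Delta^{(m,n)}_{\texttt a;\lambda}=\Bigl(\sum_{\mu\in\Lambda^{(m,n)}_{\texttt a}}|a_{\mu+\delta}(z_\lambda)|^2\Bigr)^{-1},
\]
where $z_\lambda$ is the node expressed in the $z$-variables.

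The crux is therefore to prove $\sum_{\mu}|a_{\mu+\delta}(z_\lambda)|^2=n(n+m)^{n-1}$ for every $\lambda$. The key observation is that, after a common unimodular rescaling (which leaves $|a_{\cdot}|^2$ unchanged because $a_\beta(uz)=u^{|\beta|}a_\beta(z)$ and $\prod_j z_{\lambda,j}=1$), one has $z_{\lambda,j}=\zeta^{\,e_j}$ with $\zeta=e^{2\pi i/(m+n)}$ and $e_j=\lambda_j+n-j$ distinct in $\{0,\dots,m+n-1\}$; hence $z_{\lambda,j}^{m+n}=1$ and $a_\beta(z_\lambda)$ depends only on $\beta\bmod(m+n)$. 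Taking the representative $\mu_n=0$ (legitimate since $s_\mu$ is invariant under $\mu\mapsto\mu+(1,\dots,1)$ on the torus $z_1\cdots z_n=1$), the vectors $\mu+\delta$ are precisely the $n$-subsets of $\mathbb Z/(m+n)$ containing $0$. I would first use the cyclic shift $S\mapsto S+c$ together with a double-counting argument (each $n$-subset has exactly $n$ shifts through $0$) to replace the sum over these pointed subsets by $\tfrac{n}{m+n}$ times the sum over \emph{all} $n$-subsets, and then enlarge the latter to a sum over all ordered tuples $\beta\in(\mathbb Z/(m+n))^n$ (repeated exponents give $0$, contributing the factor $\tfrac1{n!}$). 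Expanding $|a_\beta(z_\lambda)|^2=\sum_{\sigma,\tau}\text{sgn}(\sigma\tau)\prod_j(z_{\sigma^{-1}(j)}\bar z_{\tau^{-1}(j)})^{\beta_j}$ and summing each $\beta_j$ freely over $\mathbb Z/(m+n)$ forces $\sigma=\tau$ (the $z_{\lambda,j}$ being distinct roots of unity), leaving $n!\,(m+n)^n$. Assembling the factors gives $\tfrac{n}{m+n}\cdot(m+n)^n=n(n+m)^{n-1}$, hence the uniform weight $\tfrac1{n(n+m)^{n-1}}$. I expect this orthogonality computation over $\mathbb Z/(m+n)$, and in particular the bookkeeping in the reduction from pointed subsets to free tuples, to be the main obstacle.

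Finally, for the enhanced degree of exactness I would argue as in Remark \ref{jump:rem}: for $\mu\in\Lambda^{(m+1,n)}_{\texttt a}\setminus\Lambda^{(m,n)}_{\texttt a}$ one has $\mu_1-\mu_n=m+1$, so $\text{m}_{\mu_1}(\mu)\,\text{m}_{\mu_n}(\mu)\ge1$ and the coefficient $q^{\text{m}_{\mu_1}(\mu)\,\text{m}_{\mu_n}(\mu)}$ in the definition of $Q_{\texttt a;\mu}$ vanishes at $q=0$; thus $Q_{\texttt a;\mu}=P_{\texttt a;\mu}(\cdot;0)$ and Proposition \ref{roots:prp} shows the nodes are common zeros of these Schur functions. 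Consequently the dual orthogonality \eqref{or-d:a} extends to all $\mu\in\Lambda^{(m+1,n)}_{\texttt a}$ and $\nu\in\Lambda^{(m,n)}_{\texttt a}$, and the products $P_{\texttt a;\mu}P_{\texttt a;\nu}$ span $\mathbb P^{(2m+1,n)}_{\texttt a}$; this raises the exactness of \eqref{sc:a} to the Gaussian degree $2m+1$ and completes the proof.
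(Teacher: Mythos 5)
Your proposal is correct and follows essentially the same route as the paper: specialize the nodes via Remark \ref{bounds-a:rem}, identify $P_{\texttt{a};\mu}(\cdot;0)$ with the Schur polynomial $a_{\mu+\delta}/a_\delta$ so that the weight reduces to $\bigl(\sum_\mu |a_{\mu+\delta}(z_\lambda)|^2\bigr)^{-1}$, evaluate this by the discrete orthogonality of antisymmetric monomials at $(m+n)$-th roots of unity, and upgrade the degree of exactness to $2m+1$ via Proposition \ref{roots:prp}. The only difference is that where the paper cites the discrete orthogonality as well known, you supply a self-contained (and correct) derivation via the cyclic double-counting and free summation over $\mathbb{Z}/(m+n)$.
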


\begin{proof}
As argued in Remark \ref{bounds-a:rem}, when $q=0$ the nodes are positioned at:
$
 \boldsymbol{\xi}^{(m,n)}_{\texttt{a};\lambda}=\frac{2\pi (\varrho_{\texttt{a}}+\lambda)}{m+n}$ ($\lambda\in\Lambda^{(m,n)}_{\texttt{a}}$).
The corresponding Christoffel weights simplify in this situation to
\begin{align*}
\Delta^{(m,n)}_{\texttt{a};\lambda }= & \left| C_{\texttt{a}}\left(\frac{2\pi (\varrho_{\texttt{a}}+\lambda)}{m+n} ;0\right) \right|^2
\left(\sum_{\mu\in \Lambda^{(m,n)}_{\texttt{a}}}   \left| P_{\texttt{a};\mu} \left(\frac{2\pi (\varrho_{\texttt{a}}+\lambda)}{m+n} ;0 \right) \right|^2    \right)^{-1} \\
= &  \left(\sum_{\mu\in \Lambda^{(m,n)}_{\texttt{a}}}   \left| \det \left[     \exp\left({\frac{2\pi i(\varrho_{\texttt{a};j}+\mu_j)(\varrho_{\texttt{a};k}+\lambda_k)}{m+n}   } \right)   \right]_{1\leq j,k\leq n} \right|^2    \right)^{-1} 
\\
=& \frac{1}{n(n+m)^{n-1}}
\end{align*}
(where the last step relies on well-known discrete orthogonality relations for the antisymmetric monomials, cf. e.g. \cite[\S 13.8]{kac:infnite-dimensional}, \cite[Section 4.2]{die:finite-dimensional}, and \cite[Section 7.4]{moo-pat:cubature}).
It remains to infer that at $q=0$  the cubature formula extends from $f\in \mathbb{P}^{(2m-1,n)}_{\texttt{a}}$ to $f\in \mathbb{P}^{(2m+1,n)}_{\texttt{a}}$, which is done
by carefully reviewing/adapting  the proof of Theorem \ref{hwc-a:thm}.  Indeed, if $\mu\in \Lambda_{\texttt{a}}^{(m+1,n)}\setminus \Lambda_{\texttt{a}}^{(m,n)}$ then
$P_{\texttt{a};\mu}\left(\frac{2\pi (\varrho_{\texttt{a}}+\lambda)}{m+n}; 0\right)=0$  for all
$\lambda\in\Lambda_{\texttt{a}}^{(m,n)}$  (by Proposition \ref{roots:prp}).  So  at $q=0$ the equality between  the orthogonality relations in Eqs.  \eqref{or-c:a} and \eqref{or-d:a} (and thus the cubature rule with $f(\boldsymbol{\xi})$ of the form in Eq. \eqref{kostka}) is in fact valid for any
$\mu\in\Lambda_{\texttt{a}}^{(m+1,n)}$ and $\nu\in\Lambda_{\texttt{a}}^{(m,n)}$ (cf. Remark \ref{jump:rem}).
\end{proof}

Up to rescaling (of the underlying root-- and weight lattices) by the (index) factor $n$, the
cubature rule in Proposition \ref{sc-a:prp} boils down to that of \cite[Theorem 5.8]{li-xu:discrete}. Moreover, the cubature in question can also be seen as a special case of 
\cite[Theorem 7.2]{moo-pat:cubature} corresponding to the root system $R=A_{n-1}$ (cf. also \cite{mun:group}).

\begin{remark}\label{gauss-a:rem}
Proposition \ref{sc-a:prp}  elucidates  in particular that at $q=0$ the  degree of exactness  jumps to the optimal Gaussian value $2m+1$. Indeed, as emphasized in the above proof:
Proposition \ref{roots:prp}  recovers the known fact  that  the $q=0$ cubature nodes
$\frac{2\pi (\varrho_{\texttt{a}}+\lambda)}{m+n}$, $\lambda\in\Lambda_{\texttt{a}}^{(m,n)}$
consist of common roots of the Schur polynomials  $P_{\texttt{a};\mu}(\boldsymbol{\xi}; 0)$, $\mu\in \Lambda_{\texttt{a}}^{(m+1,n)}\setminus \Lambda_{\texttt{a}}^{(m,n)}$ (cf.  \cite[Theorem 5.7]{li-xu:discrete} and  \cite[Section 5]{moo-pat:cubature}).
\end{remark}

\begin{remark}
Proposition \ref{sc-a:prp} confirms that at $q=0$ the
determinantal formula for the Christoffel weights in Proposition \ref{cw-a:prp} persists for arbitrary $n_{\texttt{a}}=n-1\geq 1$. Indeed, for this special parameter value:
\begin{align*}
\det \left[  H^{(m,n)}_{\texttt{a}; j,k} (  \boldsymbol{\xi} )  \right]_{1\leq j,k\leq n} &=
\det \bigl[  (m+n)\delta_{j,k}-1 \bigr]_{1\leq j,k\leq n}  \\
&= {m (m+n)^{n-1}}
\end{align*}
(where $\delta_{j,k}$ refers to the Kronecker delta).

\end{remark}

\subsection{Schur cubature for $\texttt{c}=\texttt{b}$}\label{SchurB:sec}
At $q=0$ the cubature rule in Section \ref{cub:b:sec} becomes of a type studied in Ref.  \cite{die-ems:cubature}. The rules in question are designed to integrate symmetric functions, with prescribed poles at coordinate hyperplanes, against the density of the circular quaternion ensemble.

\begin{proposition}[Schur Cubature: $\texttt{c}=\texttt{b}$]\label{sc-b:prp}
For  $q=0$ (with $q_0,q_1\in  (-1,1)$ and $m\in\mathbb{Z}_{> 0}$), the cubature rule in Corollary \ref{hwc-b:cor} specializes to
\begin{subequations}
\begin{equation}\label{sc:b}
\frac{1}{(2\pi )^{n} }   \int_{\mathbb{A}^{(n)}_{\texttt{b}}}   R_{\texttt{b}}(\boldsymbol{\xi}) \rho_{\texttt{b}} (\boldsymbol{\xi})  \text{d} \boldsymbol{\xi}
=
\sum_{\lambda\in\Lambda^{(m,n)}_{\texttt{b}}}      R_{\texttt{b}}\bigl( \boldsymbol{\xi}^{(m,n)}_{\texttt{b};\lambda}  \bigr)   \rho_{\texttt{b}} \bigl( \boldsymbol{\xi}^{(m,n)}_{\texttt{b};\lambda}\bigr) \Delta^{(m,n)}_{\texttt{b};\lambda } ,
\end{equation}
where
\begin{equation}
R_{\texttt{b}}(\boldsymbol{\xi})=   \frac{f(\boldsymbol{\xi})}{ \prod_{1\leq j\leq n}  (1-2q_0\cos(\xi_j) +q_0^2)}    ,
\end{equation}
with $f(\boldsymbol{\xi})$ denoting an arbitrary symmetric polynomial in $\mathbb{P}^{(2m,n)}_{\texttt{b}}$.
The corresponding cubature roots and  Christoffel weights then take the form
\begin{equation}\label{q=0-nodes:b}
\boldsymbol{\xi}^{(m,n)}_{\texttt{b};\lambda}=   \left(  \xi^{(m+n)}_{\lambda_1+n-1},  \xi^{(m+n)}_{\lambda_2+n-2},\ldots ,  \xi^{(m+n)}_{\lambda_{n-1}+1},  \xi^{(m+n)}_{\lambda_n} \right)
\end{equation}
and 
\begin{equation}
\Delta^{(m,n)}_{\texttt{b};\lambda }= \prod_{1\leq j\leq n}  \Delta_{\lambda_j+n-j}^{(m+n)}  ,
\end{equation}
respectively, where
\begin{equation}\label{cw-n=1:b}
 \Delta_{l}^{(m+n)}:=\Delta^{(m+n-1,1)}_{\texttt{b}; l }=  \left( 2(m+n)+ u_{q_0} \bigl(  \xi^{(m+n)}_l \bigr)+ u_{q_1} \bigl(  \xi^{(m+n)}_l \bigr) \right)^{-1}
\end{equation}
and $\xi^{(m+n)}_l:=\xi^{(m+n-1,1)}_{\texttt{b};l}$ denotes the unique real root of the transcendental equation
\begin{equation}\label{n=1-nodes:b}
2(m+n)\xi + v_{q_0}(\xi) +v_{q_1}(\xi)=  2\pi (l+1)
\end{equation}
($0\leq l < m+n$).
\end{subequations}
\end{proposition}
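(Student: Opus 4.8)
The plan is to specialize Corollary~\ref{hwc-b:cor} at $q=0$ and to verify its two explicit assertions: the factorized node positions \eqref{q=0-nodes:b} and the product formula for the Christoffel weights. Since $q_1$ is kept generic there is no jump in the degree of exactness, so the cubature identity itself is inherited verbatim from Corollary~\ref{hwc-b:cor}; the only work is to evaluate the nodes and the weights at $q=0$. The guiding principle throughout is that setting $q=0$ \emph{decouples} the problem into $n$ independent rank-one problems at level $m+n-1$.

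First I would locate the nodes. At $q=0$ one has $u_0(\theta)\equiv 1$, hence $v_0(\vartheta)=\vartheta$, so the cross terms in the critical-point equations \eqref{CEQ:b} collapse:
\[
\sum_{k\neq j}\bigl(v_0(\xi_j+\xi_k)+v_0(\xi_j-\xi_k)\bigr)=\sum_{k\neq j}2\xi_j=2(n-1)\xi_j .
\]
The system \eqref{CEQ:b} thus separates into the $n$ uncoupled single-variable equations
\[
2(m+n)\xi_j+v_{q_0}(\xi_j)+v_{q_1}(\xi_j)=2\pi\bigl((\lambda_j+n-j)+1\bigr)\qquad(j=1,\ldots,n),
\]
where I have used $\varrho_{\texttt{b};j}+\lambda_j=(\lambda_j+n-j)+1$. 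Each of these is precisely the rank-one equation \eqref{n=1-nodes:b} with $l=\lambda_j+n-j$, and by the existence and uniqueness of the minimizer recorded after \eqref{Hesse:b} its solution is the unique real root $\xi^{(m+n)}_{\lambda_j+n-j}$, which yields \eqref{q=0-nodes:b}. Note that as $\lambda$ ranges over $\Lambda^{(m,n)}_{\texttt{b}}$ the strictly decreasing tuples $(\lambda_j+n-j)_{j}$ range bijectively over the $n$-element subsets of $\{0,1,\ldots,m+n-1\}$.

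For the Christoffel weights I would exploit the free-fermion structure that emerges at $q=0$. Here every factor in \eqref{w:b} degenerates to $1$, so $\delta^{(m,n)}_{\texttt{b};\mu}(0)=1$ and \eqref{cw:b} reduces to $\hat{\Delta}^{(m,n)}_{\texttt{b};\lambda}=\bigl(\sum_{\mu}|P_{\texttt{b};\mu}(\boldsymbol{\xi}^{(m,n)}_{\texttt{b};\lambda};0,q_0)|^2\bigr)^{-1}$, while the density factorizes,
\[
|C_{\texttt{b}}(\boldsymbol{\xi};0,q_0)|^{-2}=2^{n(n+1)}\prod_{1\leq j\leq n}\frac{1-\cos^2\xi_j}{1-2q_0\cos\xi_j+q_0^2}\prod_{1\leq j<k\leq n}\bigl(\cos\xi_j-\cos\xi_k\bigr)^2 ,
\]
a product of rank-one weights times a squared Vandermonde in the $\cos\xi_j$. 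Writing $\xi_{\lambda,j}$ for the $j$-th coordinate of $\boldsymbol{\xi}^{(m,n)}_{\texttt{b};\lambda}$ and $p_t(\xi):=P_{\texttt{b};(t)}(\xi;0,q_0)$ for the rank-one polynomials at level $m+n-1$, the plan is to put $P_{\texttt{b};\mu}(\boldsymbol{\xi};0,q_0)$ into its Weyl (Slater-determinant) form, as the ratio of $\det[p_{\mu_k+n-k}(\xi_j)]_{1\le j,k\le n}$ over the common Weyl denominator, and then to sum by Cauchy--Binet:
\[
\sum_{\mu\in\Lambda^{(m,n)}_{\texttt{b}}}\bigl|\det[p_{\mu_k+n-k}(\xi_{\lambda,j})]_{1\le j,k\le n}\bigr|^2=\det\Bigl[\sum_{t=0}^{m+n-1}p_t(\xi_{\lambda,j})\overline{p_t(\xi_{\lambda,j'})}\Bigr]_{1\le j,j'\le n}.
\]
The rank-one case of the primal orthogonality \eqref{d-ortho:b} makes the kernel $\sum_t p_t(\xi_s)\overline{p_t(\xi_{s'})}$ vanish off the diagonal, so the right-hand determinant collapses to the product of its diagonal entries. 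Reinstating the factorization of the density then produces $\Delta^{(m,n)}_{\texttt{b};\lambda}=\prod_{j}\Delta^{(m+n)}_{\lambda_j+n-j}$, each rank-one factor $\Delta^{(m+n)}_{l}=\Delta^{(m+n-1,1)}_{\texttt{b};l}$ being the $n=1$ instance of Proposition~\ref{cw-b:prp}, which gives \eqref{cw-n=1:b} and \eqref{n=1-nodes:b}.

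The hard part will be this determinant-to-product collapse: making the Weyl/Slater-determinant form of $P_{\texttt{b};\mu}(\boldsymbol{\xi};0,q_0)$ precise, including the $q_0$-dependent single-particle functions and the exact matching of the Weyl denominator with $|C_{\texttt{b}}(\boldsymbol{\xi};0,q_0)|^{-1}$ that turns $|C_{\texttt{b}}|^2\hat{\Delta}^{(m,n)}_{\texttt{b};\lambda}$ into $\prod_j\Delta^{(m+n)}_{\lambda_j+n-j}$. As a consistency check requiring none of this machinery, I would also observe that at $q=0$ the Hessian \eqref{Hesse:b} is diagonal, since each off-diagonal entry equals $u_0(\xi_j+\xi_k)-u_0(\xi_j-\xi_k)=1-1=0$; hence
\[
\det\bigl[H^{(m,n)}_{\texttt{b};j,k}\bigr]_{1\leq j,k\leq n}=\prod_{1\leq j\leq n}\bigl(2(m+n)+u_{q_0}(\xi_j)+u_{q_1}(\xi_j)\bigr)=\prod_{1\leq j\leq n}\bigl(\Delta^{(m+n)}_{\lambda_j+n-j}\bigr)^{-1},
\]
which both matches the claimed product formula and shows that the determinantal formula of Proposition~\ref{cw-b:prp} persists for all $n$ at this special parameter value.
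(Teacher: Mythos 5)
Your proposal is correct and follows essentially the same route as the paper: decoupling of the critical-point equations \eqref{CEQ:b} at $q=0$ into $n$ copies of the rank-one equation \eqref{n=1-nodes:b}, the Slater-determinant form of $P_{\texttt{b};\mu}(\boldsymbol{\xi};0,q_0)$ in terms of the Bernstein--Szeg\H{o}-type functions $p_l(\xi;q_0)$, Cauchy--Binet, the single-variable instance of the discrete orthogonality \eqref{d-ortho:b} to diagonalize the resulting Gram determinant, and the $n_{\texttt{b}}=1$ case of Proposition \ref{cw-b:prp} for the rank-one weights. Your closing Hessian computation is not needed for the proof but correctly reproduces the paper's subsequent remark that the determinantal weight formula persists for all $n$ at $q=0$.
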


\begin{proof}
It is immediate from Eq. \eqref{CEQ:b} that at $q=0$ the nodes are of the form in Eq. \eqref{q=0-nodes:b} with $\xi^{(m+n)}_l$ solving Eq. \eqref{n=1-nodes:b}. Moreover, we have that
(cf. \cite[Remark 3.7]{die-ems-zur:completeness})
\begin{equation*}
P_\mu (\boldsymbol{\xi};0,q_0 )=   \frac{\det [  p_{n-j+\mu_j}(\xi_k;q_0)]_{1\leq j,k\leq n}}{\prod_{1\leq j< k\leq n} (2\cos (\xi_j)-2\cos(\xi_k))} ,
\end{equation*}
with
\begin{equation}\label{HL-n=1:b}
p_l(\xi;q_0):=c(\xi;q_0) e^{il\xi} +c(-\xi;q_0)  e^{-il\xi}  ,\quad c(\xi;q_0):= \frac{1-q_0e^{-i\xi}}{1-e^{-2i\xi}}  .
\end{equation}
The corresponding Christoffel weights thus take the form
\begin{align*}
\Delta^{(m,n)}_{\texttt{b};\lambda} =&
 | C_{\texttt{b}}(\boldsymbol{\xi}^{(m,n)}_{\texttt{b};\lambda} ;0,q_0) |^2
\Biggl(
\sum_{\mu\in \Lambda^{(m,n)}_{\texttt{b}}}    \left| P_{\texttt{b};\mu} \bigl(\boldsymbol{\xi}^{(m,n)}_{\texttt{b};\lambda} ;0,q_0\bigr)  \right|^2 \Biggl)^{-1}  \\
=&   \left| \prod_{1\leq k\leq n}  c \bigl(    \xi^{(m+n)}_{n-k+\lambda_k}      ;q_0\bigr) \right|^2\\
&\times \Biggl(
\sum_{\mu\in \Lambda^{(m,n)}_{\texttt{b}}}    \left( \det [  p_{n-j+\mu_j}(\xi^{(m+n)}_{n-k+\lambda_k};q_0)]_{1\leq j,k\leq n} \right)^2 \Biggl)^{-1} .
\end{align*}
The sum in the denominator can be rewritten as:
\begin{align*}
&\sum_{\mu\in \Lambda^{(m,n)}_{\texttt{b}}}    \left( \det [  p_{n-j+\mu_j}(\xi^{(m+n)}_{n-k+\lambda_k};q_0)]_{1\leq j,k\leq n} \right)^2\\
&=\sum_{m+n>\nu_1>\nu_2>\dots > \nu_n\geq 0}    \left( \det [  p_{\nu_j}(\xi^{(m+n)}_{n-k+\lambda_k};q_0)]_{1\leq j,k\leq n} \right)^2 \\
&\stackrel{(i)}{=}     \det \left[    \sum_{0\leq \nu <m+n}   p_{\nu}(\xi^{(m+n)}_{n-j+\lambda_j};q_0)    p_{\nu}(\xi^{(m+n)}_{n-k+\lambda_k };q_0) \right]_{1\leq j,k\leq n}  \\
& \stackrel{(ii)}{= } \det \left( \text{diag} \left[    \sum_{0\leq \nu <m+n}   \left| p_{\nu} (\xi^{(m+n)}_{n-k+\lambda_k};q_0) \right|^2   \right]_{1\leq k\leq n}   \right) ,
\end{align*}
where we relied on the Cauchy-Binet formula  $(i)$ and on a special instance of  the  orthogonality in Eq. \eqref{d-ortho:b} corresponding to a single variable on $m+n$ nodes $(ii)$.

The upshot is that the  Christoffel weights factorize at $q=0$ as follows:
\begin{equation*}
\Delta^{(m,n)}_{\texttt{b};\lambda} = \prod_{1\leq j \leq n}  \Delta^{(m+n)}_{n-j+\lambda_j}
\end{equation*}
with
\begin{align*}
 \Delta^{(m+n)}_{l}=&  \left| c \bigl(    \xi^{(m+n)}_{l}      ;q_0\bigr) \right|^2 \left( \sum_{0\leq\nu <m+n}   \left| p_{\nu} (\xi^{(m+n)}_{l};q_0) \right|^2 \right)^{-1} \\
 =&   \Delta_{\texttt{b};l}^{(m+n-1,1)}=   \left( 2(m+n)+ u_{q_0} \bigl(  \xi^{(m+n)}_l \bigr)+ u_{q_1} \bigl(  \xi^{(m+n)}_l \bigr) \right)^{-1} ,
\end{align*}
where the last equality hinges on the formula in Proposition \ref{cw-b:prp} (with $n_{\texttt{b}}=1$ and $m+n$ nodes). 
\end{proof}

The rule in Proposition \ref{sc-b:prp} boils down to a special case of
\cite[Theorem 2]{die-ems:cubature} with $d=\tilde{d}=1$ and $\epsilon_\pm,\tilde{\epsilon}_\pm =1$. It fits within a general framework due to Berens, Schmid and Xu designed to promote Gaussian quadratures to
cubature rules for symmetric functions, cf.
 \cite[Equation (8)]{ber-sch-xu:multivariate}.
If in addition $q_0=q_1=0$, then our rule  simplifies further: 
\begin{align}\label{ss-rule}
\frac{1}{(2\pi )^{n} }   \int_{\mathbb{A}^{(n)}_{\texttt{b}}} & f (\boldsymbol{\xi}) \rho_{\texttt{b}} (\boldsymbol{\xi})  \text{d} \boldsymbol{\xi}
=\\
&\frac{1}{    2^n (m+n+1)^n   }
\sum_{\lambda\in\Lambda^{(m,n)}_{\texttt{b}}}      f \Bigl( \frac{\pi (\varrho_{\texttt{b}}+\lambda)}{m+n+1} \Bigr)   \rho_{\texttt{b}} \Bigl( \frac{\pi (\varrho_{\texttt{b}}+\lambda)}{m+n+1} \Bigr)  ,\nonumber
\end{align}
where $\varrho_{\texttt{b}}:=( \varrho_{\texttt{b};1},\ldots ,\varrho_{\texttt{b};n})$ and $f(\boldsymbol{\xi})$ denotes an arbitrary symmetric polynomial in $\mathbb{P}^{(2m+1,n)}_{\texttt{b}}$.
As before, the jump  to the optimal Gaussian degree of exactness $2m+1$, at vanishing parameter values,  is a consequence of the fact that
the pertinent cubature nodes $ \frac{\pi (\varrho_{\texttt{b}}+\lambda)}{m+n+1}$, $\lambda\in\Lambda^{(m,n)}_{\texttt{b}}$
are common roots of the (symplectic) Schur polynomials  $P_{\texttt{b};\mu}(\boldsymbol{\xi}; 0,0,0)$, $\mu\in \Lambda_{\texttt{b}}^{(m+1,n)}\setminus \Lambda_{\texttt{b}}^{(m,n)}$
(cf. Remark \ref{jump:rem}).
In fact, the (symplectic) Schur cubature rule in Eq. \eqref{ss-rule} can be identified as a special case of the Gaussian cubature rule in \cite[Eqs. (9.2a), (9.2b)]{die-ems:exact} with $\epsilon_\pm =1$. Closely related cubature rules were discussed in \cite[Section 5]{hri-mot:discrete}.

\begin{remark}
Proposition \ref{sc-b:prp} confirms that at $q=0$ the
determinantal formula for the Christoffel weights in Proposition \ref{cw-b:prp} persists for arbitrary $n_{\texttt{b}}=n\geq 1$, since for this special parameter value:
\begin{align*}
  \det \left[  H^{(m,n)}_{\texttt{b}; j,k} (  \boldsymbol{\xi} ) \right]_{1\leq j,k\leq n} &=
 \det \Bigl( \text{diag} \bigl[ 2(m+n) +u_{q_0}(\xi_j)+ u_{q_1}(\xi_j) \bigr]_{1\leq j\leq n} \Bigr)   \\
&= \prod_{1\leq j\leq n} \bigl(2(m+n) +u_{q_0}(\xi_j)+ u_{q_1}(\xi_j) \bigr) .
\end{align*}
\end{remark}

\subsection{Monomial cubature}\label{monomialA:sec}
At $q=1$ the Hall-Littlewood polynomial degenerates to a symmetric monomial $P_{\texttt{a};\mu}(\boldsymbol{\xi};1)=N_{\texttt{a};\mu}  M_{\texttt{a};\mu}  (\boldsymbol{\xi})$, $\mu\in\Lambda_{\texttt{a}}^{(m,n)}$.
The corresponding cubature rule can be found in \cite[Section 5.3]{li-xu:discrete} in the algebraic formulation of Remark \ref{non-minimal:rem}  (upon rescaling  the variables with the index $n$), and in \cite[Section 3.2]{mun:group} and \cite[Section 3.2]{hri-mot-pat:cubature} (upon specialization to the root system $R=A_{n-1}$):
\begin{subequations}
\begin{equation}\label{mc:a}
\frac{1}{(2\pi )^{n-1} n^{1/2}}   \int_{\mathbb{A}^{(n)}_{\texttt{a}}}   f(\boldsymbol{\xi})   \text{d} \boldsymbol{\xi} =
\frac{1}{n \, m^{n-1}}\sum_{\lambda\in\Lambda^{(m,n)}_{\texttt{a}}}      f \left(\frac{2\pi \lambda}{m}
 \right)   \delta^{(m,n)}_{\texttt{a};\lambda}  (1)
\end{equation}
for   $  f(\boldsymbol{\xi}) $ in $\mathbb{P}_{\texttt{a}}^{(2m-1,n)}$, where
\begin{equation}\label{cw-mc:a}
\delta^{(m,n)}_{\texttt{a};\lambda }(1)= \prod_{\substack{1\leq j<k\leq n\\ \lambda_j-\lambda_k=0}}  \frac{k-j}{1+k-j}
\prod_{\substack{1\leq j<k\leq n\\ \lambda_j-\lambda_k=m}}  \frac{n-k+j} {n+1-k+j}.
\end{equation}
\end{subequations}

\begin{remark}\label{c=a-q=1:rem}
By adapting the proof of Theorem \ref{hwc-a:thm}, the cubature in Eqs. \eqref{mc:a}, \eqref{cw-mc:a}  can be readily inferred independently.
To this end it suffices to replace the orthogonality relations in Eqs. \eqref{or-c:a} and  \eqref{or-d:a}  by the corresponding $q=1$ degenerations:
\begin{equation*}
\sum_{\lambda\in\Lambda^{(m,n)}_{\texttt{a}}}      P_{\texttt{a};\mu} \left(\frac{2\pi\lambda}{m} ;1 \right)     \overline{  P_{\texttt{a};\nu} \left(\frac{2\pi \lambda}{m} ;1 \right) }
\delta^{(m,n)}_{\texttt{a};\lambda } (1)
=
\begin{cases} 
1/ \delta^{(m,n)}_{\texttt{a};\mu } (1)&\text{if}\  \nu= \mu  \\
0 &\text{if}\  \nu\neq \mu 
\end{cases} 
\end{equation*}
(cf. e.g. \cite[Section 5.2]{die-vin:quantum}), and
\begin{equation*}
\frac{1}{(2\pi )^{n-1} n^{1/2}}   \int_{\mathbb{A}^{(n)}_{\texttt{a}}}    P_{\texttt{a};\mu}  (\boldsymbol{\xi};1) \overline{P_{\texttt{a};\nu}  (\boldsymbol{\xi};1 )}     \text{d} 
\boldsymbol{\xi} 
=
\begin{cases} 
N_{\texttt{a};\mu} &\text{if}\  \nu= \mu \\
0 &\text{if}\  \nu\neq \mu 
\end{cases} 
\end{equation*}
($\mu,\nu\in\Lambda^{(m,n)}_{\texttt{a}}$).
\end{remark}

\subsection{Symmetrized quadrature}\label{SymmetrizedB:sec}
For $q=1$ the multivariate hyperoctahedral Hall-Littlewood polynomials factorize in terms of the corresponding univariate polynomials:
\begin{equation}
P_{\texttt{b};\mu}(\boldsymbol{\xi}; 1, q_0) = \prod_{1\leq j\leq n}   p_{\mu_j}(\xi_j;q_0) 
\end{equation}
(where $p_l(\xi;q_0)$ is taken from Eq. \eqref{HL-n=1:b}). Our cubature rule then becomes an $n$-fold product of quadratures restricted to the space of symmetric functions:
\begin{subequations}
\begin{align}\label{sq:b}
\frac{1}{(2\pi )^{n} }  & \int_{\mathbb{A}^{(n)}_{\texttt{b}}}   R_{\texttt{b}}(\xi_1,\ldots ,\xi_n)  \prod_{1\leq j\leq n} \rho_{\texttt{b}}(\xi_j )  \text{d} \xi_j 
= \\
&\sum_{\lambda\in\Lambda^{(m,n)}_{\texttt{b}}}      R_{\texttt{b}}\bigl( \xi_{\lambda_1}^{(m+1)},\ldots ,\xi_{\lambda_n}^{(m+1)} \bigr)   
\left( \prod_{1\leq j\leq n} \rho_{\texttt{b}}(\xi^{(m+1)}_{\lambda_j} ) \Delta^{(m+1)}_{\lambda_j } \right)     \delta_{\texttt{b};\lambda}^{(m,n)}(1) , \nonumber
\end{align}
where the integration measure  determined by $
 \rho_{\texttt{b}}(\xi )= 4(1-\cos^2 (\xi)) $,   the Christoffel weights  $\Delta^{(m+1)}_l$ and nodes $\xi^{(m+1)}_{\lambda_j} $ are governed by  Eqs. \eqref{cw-n=1:b} and \eqref{n=1-nodes:b}, and
 \begin{equation}
  \delta_{\texttt{b};\lambda}^{(m,n)}(1)   = \prod_{\substack{1\leq j<k\leq n\\ \lambda_j-\lambda_k=0}}  \frac{k-j}{1+k-j} .
  \end{equation}
The cubature rule in question is exact for symmetric rational functions with prescribes poles of the form
\begin{equation}\label{sq-R:b}
R_{\texttt{b}}(\xi_1,\ldots ,\xi_n)=   \frac{f(\xi_1,\ldots, \xi_n)}{\prod_{1\leq j\leq n}  (1-2q_0\cos(\xi_j) +q_0^2) }    ,
\end{equation}
\end{subequations}
where $f(\xi_1,\ldots ,\xi_n)$  denotes an arbitrary symmetric polynomial in $\mathbb{P}^{(2m,n)}_{\texttt{b}}$ (or in $\mathbb{P}^{(2m+1,n)}_{\texttt{b}}$ if $q_1=0$, cf. Remark \ref{jump:rem}).

\begin{remark}
As before (cf.  Remark \ref{c=a-q=1:rem}), the cubature in Eqs. \eqref{sq:b}---\eqref{sq-R:b}  is  readily verified
by adapting the proof of Theorem \ref{hwc-b:thm}. 
The relevant $q=1$ degenerations of the
orthogonality relations in Eqs.  \eqref{or-c:b} and \eqref{or-d:b} read:
\begin{align*}
\sum_{\lambda\in\Lambda^{(m,n)}_{\texttt{b}}}    
   \Biggl( P_{\texttt{b};\mu} \bigl( \xi^{(m+1)}_{\lambda_1},\ldots, \xi^{(m+1)}_{\lambda_n} ;1, q_0 \bigr)    
\overline{  P_{\texttt{b};\nu} \bigl(\xi^{(m+1)}_{\lambda_1},\ldots, \xi^{(m+1)}_{\lambda_n}  ;1, q_0 \bigr) }  \delta^{(m,n)}_{\texttt{b};\lambda }(1)  &\\
\times  \prod_{1\leq j\leq n} | c(\xi_{\lambda_j}^{(m+1)};q_0) |^{-2}
\Delta^{(m+1)}_{\lambda_j }  \Biggr)  
=
\begin{cases} 
1/ \delta^{(m,n)}_{\texttt{b};\mu }(1) &\text{if}\  \nu= \mu , \\
0 &\text{if}\  \nu\neq \mu ,
\end{cases} & \nonumber
\end{align*}
and
\begin{align*}
\frac{1}{(2\pi )^{n} }   \int_{\mathbb{A}^{(n)}_{\texttt{b}}}    P_{\texttt{b};\mu}  (\xi_1,\ldots ,\xi_n;1,q_0) \overline{P_{\texttt{b};\nu}  (\xi_1,\ldots ,\xi_n;1 ,q_0) }   \prod_{1\leq j \leq n} | c(\xi_j;q_0) |^{-2}  \text{d} 
\boldsymbol{\xi} &  \\
=
\begin{cases} 
1/ \delta^{(m,n)}_{\texttt{b};\mu }(1)  &\text{if}\  \nu= \mu ,\\
0 &\text{if}\  \nu\neq \mu .
\end{cases}
 & \nonumber
\end{align*}
\end{remark}

\begin{remark}
The cubature rule in Eqs. \eqref{sq:b}---\eqref{sq-R:b} amounts to a particular example of the symmetrized $n$-fold quadrature rule in
\cite{ber-sch-xu:multivariate} (cf. the formula in {\em loc. cit.} on the middle of page 31). The pertinent underlying orthogonal polynomials $p_l(\xi;q_0)$ \eqref{HL-n=1:b} can be identified
as one-parameter  Bernstein-Szeg\"o polynomials of the second kind \cite[Section 2.6]{sze:orthogonal}. For specific values of the parameter $q_0$, one reduces to  Chebyshev polynomials of the second kind ($q_0=0$), of the third kind ($q_0=-1$), or of the fourth kind ($q_0=1$), respectively (cf. e.g. \cite[Remark 6.1]{die-ems:exact}). When specializing $q_1$ in the same way, the associated quadratures stem
from the orthogonality relations of standard discrete (co)sine transforms:  DCT-2 ($q_0=1$, $q_1=1$), DCT-4 ($q_0=1$, $q_1=-1$), DCT-8 ($q_0=1$, $q_1=0$), DST-1 ($q_0=0$, $q_1=0$), DST-2 ($q_0=-1$, $q_1=-1$), DST-4 ($q_0=-1$, $q_1=1$), DST-5 ($q_0=0$, $q_1=-1$), DST-6 ($q_0=-1$, $q_1=0$), DST-7 ($q_0=0$, $q_1=1$), cf. e.g. \cite[Eq. (3.8)]{die-ems:cubature}.
A systematic study of some of these and other closely related symmetrized $n$-fold quadrature rules was carried out in Refs.
\cite{hri-mot:discrete,moo-mot-pat:gaussian,hri-mot-pat:cubature}.
\end{remark}

\bibliographystyle{amsplain}

\end{document}